\renewcommand{\emptyset}{\varnothing}
\newcommand{\shp}{shape}
\newcommand{\shd}{clique}
\newcommand{\N}{\mathbb{N}}
\newcommand{\Z}{\mathbb{Z}}
\newcommand{\Q}{\mathbb{Q}}
\newcommand{\PP}{\mathbb{P}}
\newcommand{\F}{\vec F}
\newcommand{\vH}{\vec H}
\newcommand{\E}{\operatorname{End}}
\newcommand{\HH}{\operatorname{Hom}}
\newtheorem{theorem}{Theorem}[section]
\newtheorem{definition}[theorem]{Definition}
\newtheorem{lemma}[theorem]{Lemma}
\newtheorem{corollary}[theorem]{Corollary}
\newtheorem{proposition}[theorem]{Proposition}
\newtheorem{problem}[theorem]{Problem}
\newtheorem{remark}[theorem]{Remark}
\newtheorem{example}[theorem]{Example}
\renewcommand{\emph}[1]{\textbf{#1}}
\begin{document}

\title[Polynomial extensions of partition results]{New polynomial and multidimensional extensions of classical partition results}

\author[Bergelson]{Vitaly Bergelson}
\author[Johnson]{John H.~Johnson Jr.}
\author[Moreira]{Joel Moreira}

\keywords{Rado Theorem, partition regularity, Deuber system}

\thanks{The first author gratefully acknowledges the support of the NSF under grant DMS-1162073}
\begin{abstract}
In the 1970s Deuber introduced the notion of $(m,p,c)$-sets in $\N$ and showed that these sets are partition regular and contain all linear partition regular configurations in $\N$.
  In this paper we obtain enhancements and extensions of classical results on $(m,p,c)$-sets in two directions.
  First, we show, with the help of ultrafilter techniques, that Deuber's results extend to polynomial configurations in abelian groups.
  In particular, we obtain new partition regular polynomial configurations in $\Z^d$.
  Second, we give two proofs of a generalization of Deuber's results to general commutative semigroups.

  We also obtain a polynomial version of the central sets theorem of Furstenberg, extend the theory of $(m,p,c)$-systems of Deuber, Hindman and Lefmann and generalize a classical theorem of Rado regarding partition regularity of linear systems of equations over $\N$ to commutative semigroups.
\end{abstract}
\maketitle

\section{Introduction}

The main goal of this paper is to obtain new polynomial and multidimensional generalizations of Ramsey-theoretical results due to R. Rado \cite{Rado33} and W. Deuber \cite{Deuber73}.
To put our results into perspective, we will start the discussion by briefly reviewing some of the relevant classical results.

Some familiar results of Ramsey theory can be formulated as results about partition regularity of homogeneous systems of equations. For example, the celebrated van der Waerden theorem \cite{vdWaerden27}, which states that, for any finite coloring $\N=\bigcup_{i=1}^rC_i$, one of the $C_i$ contains arbitrarily long arithmetic progressions $\{x, x+d,\dots,x+(k-1)d\},~d\neq0$, can be formulated as follows.
\begin{theorem}
  For any finite coloring of $\N=\{1,2,\dots\}$ and for any $k\in\N$ there exists a monochromatic solution of the system
  \begin{equation}\label{eq_intro_vdw}x_2-x_1=x_3-x_2=\cdots=x_k-x_{k-1}\neq0.\end{equation}
\end{theorem}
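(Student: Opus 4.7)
The statement is a direct reformulation of van der Waerden's theorem: a tuple $(x_1,\dots,x_k)$ satisfies the system $x_2-x_1 = x_3-x_2 = \cdots = x_k-x_{k-1} \neq 0$ precisely when it forms an arithmetic progression of length $k$ with nonzero common difference. Thus the plan is to exhibit, in any finite coloring of $\N$, a monochromatic $k$-term arithmetic progression, which is exactly the content of van der Waerden's classical result.

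My approach would be the standard double induction on the length $k$ and the number of colors $r$. The base case $k=2$ is immediate: an infinite $\N$ partitioned into finitely many cells has some cell of cardinality at least two, providing two distinct monochromatic points. For the inductive step, assuming the theorem holds for progressions of length $k$ under arbitrary finite colorings, I would derive it for length $k+1$ under $r$ colors by the method of color-focused progressions. One builds, for $s=1,2,\dots,r$, a family of $s$ arithmetic progressions of length $k$ with pairwise distinct common differences and pairwise distinct colors, all ``focused'' at a single point $a$. The passage from $s$ to $s+1$ is effected by partitioning a sufficiently long interval into consecutive blocks of some length $N$, coloring each block by its induced color-pattern (a coloring with at most $r^N$ colors), and applying the inductive hypothesis on length-$k$ progressions to this coarser coloring. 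Once $s=r+1$, the pigeonhole principle forces the color of $a$ to coincide with that of the last point of one of the focused progressions, and extending that progression by one step yields a monochromatic $(k+1)$-term progression.

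The main obstacle is the bookkeeping of block sizes at successive stages: ensuring that the interval chosen at stage $s$ is long enough for the inductive hypothesis on length-$k$ progressions to apply to a coloring by $r^N$ colors produces a rapidly growing tower of van der Waerden numbers, which makes a quantitative version quite delicate. This combinatorial difficulty is precisely what the ultrafilter methods developed later in the paper are designed to bypass, by encoding the required ``arbitrarily fine'' inductive structure inside the existence of a single idempotent ultrafilter on $\beta\N$.
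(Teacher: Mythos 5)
Your proposal is correct in substance: the statement is indeed just van der Waerden's theorem in the language of equations, and the double induction with color-focused progressions is the standard elementary proof of it. Note, however, that the paper does not prove this theorem at all by such means --- it is stated as the classical result of van der Waerden and cited to \cite{vdWaerden27}; within the paper's own framework it is recovered as a special case of Deuber's theorem (Example \ref{example_intro}(2): any $(1,k,1)$-set contains a Brauer configuration $s_1, s_1+s_0,\dots,s_1+ks_0$, hence a $k$-term progression), which in turn follows from the ultrafilter-based Theorem \ref{thm_statements_central} and the Hales--Jewett theorem. So the comparison is: your route is self-contained and elementary but, as you note, carries heavy quantitative bookkeeping and proves only the one-dimensional linear statement; the paper's route outsources the combinatorial core to idempotent ultrafilters and Hales--Jewett, at the price of non-constructivity, but yields the far stronger conclusion that every central set (hence some cell of every partition) contains whole $(m,\F,c)$-sets, of which the $k$-AP is a tiny fragment. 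One small imprecision in your sketch: the focusing argument terminates at $s=r$ focused progressions of pairwise distinct colors (one cannot have $r+1$ distinct colors), at which point the color of the focus $a$ must already agree with one of them; phrasing the conclusion as occurring ``once $s=r+1$'' is slightly off, though the underlying idea is the right one.
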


A slightly stronger theorem, due to A. Brauer \cite{Brauer28}, states that one can actually guarantee that the difference $d$ of the monochromatic progression $\{x, x+d,\dots,x+(k-1)d\}$ appearing in van der Waerden's theorem is also of the same color. Since $x,d,x+d$ satisfy the equation $x+y=z$, it follows that Brauer's theorem is a simultaneous extension of Schur's theorem \cite{Schur16} (which states that $x+y=z$ is a partition regular equation over $\N$) and van der Waerden's theorem. Here is a formulation of Brauer's theorem in the language of partition regularity of systems of homogeneous equations.
\begin{theorem}\label{thm_introbrauer}
  For any $k\in\N$, the system
  \begin{equation}\label{eq_brauer}\left\{\begin{array}{ccc}x_2-x_1&=&x_0\\ \vdots&\vdots&\vdots\\ x_k-x_{k-1}&=&x_0\end{array}\right.
  \end{equation}
  is partition regular, meaning that, for any partition $\N=\bigcup_{i=1}^rC_i$, one of the $C_i$ contains a solution $(x_0,x_1,\dots,x_k)$ of (\ref{eq_brauer}).
\end{theorem}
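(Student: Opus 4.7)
The plan is to derive Theorem~\ref{thm_introbrauer} from van der Waerden's theorem together with a pigeonhole argument. Observe first that a solution $(x_0, x_1, \ldots, x_k)$ of~(\ref{eq_brauer}) is precisely a tuple $(d, a, a+d, a+2d, \ldots, a+(k-1)d)$ with $a, d \in \N$, so the theorem is equivalent to the following strengthening of van der Waerden: for any finite coloring of $\N$, there exist $a, d \in \N$ such that $d$ together with the arithmetic progression $a, a+d, \ldots, a+(k-1)d$ all lie in the same color class.

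I would first apply van der Waerden's theorem with a length $L = L(r,k)$ chosen sufficiently large to obtain a monochromatic arithmetic progression $\{a + jd : 0 \leq j \leq L - 1\}$ of some color $c$. Inside this long AP we already have many candidate $k$-term progressions $x_1, \ldots, x_k$ of color $c$; what remains is to produce $x_0 = md$ also of color $c$, where $m \leq (L-1)/(k-1)$ so that the $k$-term AP of step $md$ still fits inside the long AP. To secure such an $m$, I would choose $L$ large enough that van der Waerden applies a second time to the induced coloring $m \mapsto \chi(md)$, yielding a monochromatic AP among multiples of $d$. If the color of this secondary AP is $c$, the argument closes; otherwise, one iterates, each time passing to a sub-AP with a refined common difference. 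Since only $r$ colors are available, the recursive bookkeeping of the length parameters $L$ ensures that this iteration must synchronize the colors within finitely many steps.

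The main obstacle, and the heart of the argument, is the \emph{color alignment} between the arithmetic progression and its common difference: van der Waerden's theorem produces a monochromatic AP but gives no a priori control over the color of $d$. The iterated van der Waerden construction is what resolves this, though careful tracking of the recursive length parameters is required. A cleaner alternative bypasses the iteration by invoking the Hales--Jewett theorem on a suitably chosen combinatorial cube embedded in $\N$, which yields the monochromatic Brauer configuration in one stroke.
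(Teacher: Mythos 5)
Your approach is correct in outline but genuinely different from the paper's. The paper does not prove Brauer's theorem by an elementary argument at all: it obtains it as an immediate consequence of Deuber's theorem on $(m,p,c)$-sets (Theorem~\ref{thm_introdeuber}, see Example~\ref{example_intro}), since any $(1,k,1)$-set $\{s_0\}\cup\{is_0+s_1:-k\le i\le k\}$ contains the elements $s_0,s_1,s_1+s_0,\dots,s_1+ks_0$, which solve (\ref{eq_brauer}) with $x_0=s_0$; Deuber's theorem is in turn subsumed by the paper's ultrafilter and central-sets machinery (Corollary~\ref{thm_statements_main}). Your route, iterated van der Waerden with color focusing, is the classical elementary proof: it buys self-containedness and explicit (if enormous) bounds, whereas the paper's route buys vast generality (polynomial and multidimensional extensions) at the cost of heavier machinery. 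Your closing remark that Hales--Jewett gives the result "in one stroke" is also consistent with the paper, whose finitistic Lemma in Section~\ref{section_finitistic} runs exactly on Hales--Jewett, though you do not supply the details of that alternative.

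One step does need tightening. The claim that "since only $r$ colors are available \dots\ this iteration must synchronize the colors within finitely many steps" is not justified as stated: nothing you have said prevents the colors $c=c_0,c_1,c_2,\dots$ encountered at successive stages from repeating, so the bare pigeonhole on $r$ colors does not terminate the process. The actual reason for termination is more specific. When no admissible multiple $md$ has color $c_0$, you must record that \emph{every} such multiple avoids $c_0$ --- not merely that the secondary AP carries some other color. Because the common difference produced at stage $i$ is a multiple of all earlier common differences, and the admissible multiples at stage $i$ lie (by the recursive choice of lengths) inside the ranges examined at all earlier stages, the candidate differences at stage $i$ avoid all of $c_0,\dots,c_{i-1}$; hence the colors encountered are pairwise distinct and the process stops after at most $r$ stages. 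Equivalently, and more cleanly, one inducts on $r$: if no admissible multiple of $d$ has color $c$, the induced coloring $m\mapsto\chi(md)$ uses at most $r-1$ colors, and the induction hypothesis --- for the full Brauer statement, not merely van der Waerden --- applied to that coloring pulls back to a monochromatic Brauer configuration for $\chi$. With this repair your argument is complete; see \cite{Graham_Rothschild_Spencer80} for the standard write-up.
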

In his fundamental paper \cite{Rado33}, R. Rado established a necessary and sufficient condition for partition regularity of the system $C{\bf x}=0$, where $C$ is a $k\times n$ matrix with integer entries and ${\bf x}$ is an $n$-dimensional vector. For the formulation of Rado's theorem see Section \ref{section_rado} below.

In 1973, W. Deuber offered a new approach to partition regularity of homogeneous systems of linear equations \cite{Deuber73}.
The main novelty of Deuber's approach was the introduction of a family of configurations, the so-called $(m,p,c)$-sets defined in Definition \ref{def_intrompc} below. On the one hand, these configurations can always be found in one cell of a partition of $\N$, while on the other they contain solutions of homogeneous partition regular systems of equations.

\begin{definition}\label{def_intrompc}
Let $m,p,c\in\N$ and let ${\bf s}=(s_0,\dots,s_m)\in(\Z\setminus\{0\})^{m+1}$. The $(m,p,c)$-set generated by ${\bf s}$ is the set
$$D(m,p,c;{\bf s})=\left\{\begin{array}{lr}cs_0,&\\is_0 +cs_1,& i\in\{-p,\dots,p\}\\is_0 +js_1+cs_2,& i,j\in\{-p,\dots,p\}\\ \qquad\vdots&\vdots\qquad\qquad\\i_0s_0 +\cdots+i_{m-1}s_{m-1}+cs_m,& i_{m-1},\dots,i_0\in\{-p,\dots,p\}
\end{array}\right\}$$
\end{definition}
The following theorem summarizes Deuber's results from \cite{Deuber73}.
\begin{theorem}\label{thm_introdeuber}\

  \begin{enumerate}
    \item For any $m,p,c\in\N$ and any finite partition $\N=\bigcup_{i=1}^rC_i$, one of the $C_i$ contains an $(m,p,c)$-set for some ${\bf s}\in\N^{m+1}$.
    \item For any $m,p,c,r\in\N$, there exist $M,P,C\in\N$ such that for any ${\bf S}\in\N^{M+1}$ and any $r$-coloring of $D(M,P,C;{\bf S})$, there exists ${\bf s}\in\N^{m+1}$ such that $D(m,p,c;{\bf s})$ is monochromatic.
  \end{enumerate}
\end{theorem}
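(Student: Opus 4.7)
The plan is to derive part (1) as an immediate consequence of part (2), and to prove part (2) by induction on $m$, combining the induction hypothesis (applied to an inflated color palette) with Brauer's theorem (Theorem~\ref{thm_introbrauer}) as the key tool for the inductive step.

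Part (1) would follow from part (2) by specialization: given a finite coloring of $\N$, one invokes (2) with the same $r$ to obtain $M,P,C$, and applies it to the $(M,P,C)$-set generated by ${\bf S}=(1,\dots,1)$, enlarging $C$ if necessary so that $C>PM$ and hence $D(M,P,C;{\bf S})\subseteq\N$; the restricted coloring then yields the required monochromatic $(m,p,c)$-set inside $\N$.

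For part (2), I would induct on $m$. The base case $m=0$ is immediate, since a $(0,p,c)$-set is a singleton $\{cs_0\}$ (take $M=0$, $P=p$, $C=c$). For the inductive step, I would exploit the recursive decomposition
$$D(m+1,p,c;(s_0,\dots,s_{m+1}))=\{cs_0\}\cup\bigcup_{i=-p}^{p}\bigl(is_0+D(m,p,c;(s_1,\dots,s_{m+1}))\bigr),$$
which presents an $(m+1,p,c)$-set as a ``top'' element $cs_0$ together with $2p+1$ shifted copies of an $(m,p,c)$-set. Given an $r$-coloring $\chi$ of a large ambient $(M^*,P^*,C^*)$-set, one uses the analogous decomposition of the ambient set to define a coarser coloring $\tilde\chi(x)=(\chi(iS_0+x))_{i=-P^*}^{P^*}$ on the body $D(M^*-1,P^*,C^*;(S_1,\dots,S_{M^*}))$, taking $r^{2P^*+1}$ values; applying the induction hypothesis to $\tilde\chi$, with $M^*,P^*,C^*$ chosen to dominate the parameters the induction hypothesis requires for this inflated palette, produces an $(m,p,c)$-set $D(m,p,c;{\bf t})$ on which each function $x\mapsto\chi(iS_0+x)$ is constant, say equal to $\alpha_i$.

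The induced map $i\mapsto\alpha_i$ is then an ordinary $r$-coloring of $\{-P^*,\dots,P^*\}$, and Brauer's theorem --- applied with $P^*$ sufficiently large --- would produce an integer $d$ together with a monochromatic progression $\alpha_{jd}$, $j\in\{-p,\dots,p\}$, whose common color also accommodates the separate element $cs_0$, after a careful choice of $s_0$ tied to $d$ and of $C^*$ arranged so that $cs_0$ is realized within the ambient set. This yields the desired monochromatic $(m+1,p,c)$-set. The main obstacle is precisely this last alignment: since $s_0$ appears both in $cs_0$ and in the shifts $is_0$, plain van der Waerden would furnish a monochromatic progression of $\alpha$-values but leave the color of $cs_0$ uncontrolled; Brauer's theorem is tailored to this coupling, simultaneously controlling an arithmetic progression and its common difference, which is exactly what is needed to include $cs_0$ in the monochromatic configuration.
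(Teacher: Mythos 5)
There is a genuine gap in the inductive step for part (2). Your decomposition $D(m+1,p,c;{\bf s})=\{cs_0\}\cup\bigcup_{i=-p}^{p}\bigl(is_0+D(m,p,c;(s_1,\dots,s_{m+1}))\bigr)$ is correct, but the $i=0$ term means that $D(m+1,p,c;{\bf s})$ contains the \emph{unshifted} copy of its own sub-$(m,p,c)$-set. So once the induction hypothesis has produced ${\bf t}$ with $\chi(i'S_0+x)=\alpha_{i'}$ for all $x\in D(m,p,c;{\bf t})$, and you set $s_0=dS_0$ and $(s_1,\dots,s_{m+1})={\bf t}$, monochromaticity of the body forces $\alpha_{jd}=\alpha_0$ for every $j\in\{-p,\dots,p\}$: you need an arithmetic progression of indices passing \emph{through $0$}, monochromatic in the \emph{pre-assigned} color $\alpha_0$. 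That configuration is not partition regular, and neither van der Waerden nor Brauer supplies it. Concretely, the $2$-coloring of the ambient set that paints the core (the $i'=0$ shell) red and everything else blue makes $\tilde\chi$ constant on the core, so the induction hypothesis returns some ${\bf t}$ with $\alpha_0$ red and $\alpha_{i'}$ blue for all $i'\neq0$, and no $d\neq0$ works; you also cannot replace ${\bf t}$ by a translate, since a translate of an $(m,p,c)$-set is not an $(m,p,c)$-set. A second, related failure is the top element: $cs_0=cdS_0$ must lie in the ambient set, whose only pure-$S_0$ element is $C^*S_0$, so $d$ is forced to equal $C^*/c$, destroying exactly the freedom your van der Waerden step needs, and the color of $C^*S_0$ is a single uncontrolled value. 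In the red/blue example the monochromatic $(m+1,p,c)$-set does exist, but it sits entirely inside the core --- which shows that no proof can build the new generator $s_0$ out of the single outermost level.

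This is why the paper (following Deuber and the exposition in \cite{Gunderson02}) argues differently. In Lemma \ref{lemma_induction} the ambient shape has $n(m-k)$ extra coordinates with $n$ a Hales--Jewett number, the new generators are sums $\sum_{i\in A}b(t_{i(m-k)+j})$ over the wildcard set $A$ of a variable word, and the monochromatic combinatorial line provided by Theorem \ref{thm_halesjewett} controls an entire line of the configuration at once; the induction in Theorem \ref{thm_finitistic} is on the number of monochromatic lines, not on $m$ via nested shells. Brauer's theorem is essentially the $m=1$ instance of this mechanism and does not bootstrap by product colorings. Note also that the paper does not deduce part (1) from part (2): part (1) is proved independently via idempotent ultrafilters and the generalized central sets theorem (Theorem \ref{thm_generalmain} through Corollary \ref{thm_statements_main}). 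Your reduction of (1) to (2) is fine in principle, except that you cannot ``enlarge $C$'' after the fact; instead choose ${\bf S}$ growing fast enough (e.g.\ $S_j=K^j$ with $CK>PM$) so that $D(M,P,C;{\bf S})\subset\N$.
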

Theorem \ref{thm_introdeuber} contains as special cases several classical Ramsey-theoretical results:
\begin{example}\label{example_intro}
\leavevmode
  \begin{enumerate}
    \item Schur's theorem (stated above). Indeed, any $(1,1,1)$-set contains elements $s_0,s_1,s_0+s_1$.
    \item Brauer's theorem (Theorem \ref{thm_introbrauer} above). Indeed, any $(1,k,1)$-set contains elements $s_0,s_1,s_1+s_0,\dots,s_1+ks_0$ which satisfy (\ref{eq_brauer}). As a consequence, van der Waerden's theorem also follows from Deuber's result.
    \item\label{item_example_intro} Folkman's theorem (cf. \cite[Theorem 3.11]{Graham_Rothschild_Spencer80}), stating that for any finite coloring of $\N$ there exists a set $A$ of arbitrary finite cardinality such that the set $FS(A):=\{\sum_{i\in B}i:\emptyset\neq B\subset A\}$ is monochromatic. $FS(A)$ is contained in a $(m,1,1)$-set, where $m+1$ is the cardinality of $A$.
  \end{enumerate}
\end{example}
The theorems mentioned in Example \ref{example_intro} also follow from Rado's criterion \cite{Rado33} for partition regularity of a system of linear equations (see Theorem \ref{thm_rado} below) although not as immediately.
In fact Deuber proved that a set $A\subset\N$ contains an $(m,p,c)$-set for every $m,p,c\in\N$ if and only if $A$ contains a solution to every partition regular system of the form $C{\bf x}=0$.

Deuber's approach allowed him to confirm a conjecture of Rado, stated in \cite{Rado33}. To formulate Rado's conjecture, call a set $A\subset\N$ \emph{rich} if it contains a solution to every partition regular homogeneous system of linear equations.
One can reformulate Rado's theorem as ``for any finite partition of $\N$, one of the cells is rich".
Rado's conjecture stated that for any finite partition of a rich set, one of the cells is still rich; this conjecture follows from part (2) of Theorem \ref{thm_introdeuber}.

We will see below that Theorem \ref{thm_introdeuber} can be significantly generalized in two ways.
On the one hand, we will see that results similar to Theorem \ref{thm_introdeuber} can be proved for general countable commutative semigroups (see Theorem \ref{thm_introgeneraldeuber} below).
On the other hand, in the case of countable abelian groups, part (1) of Theorem \ref{thm_introdeuber} admits a polynomial generalization (see Theorem \ref{cor_introduction} below).
These generalizations hinge on a broadening of the notion 
of $(m,p,c)$-sets (see Definition \ref{def_deuberset} and the discussion that follows it in Section \ref{section_statements}).
The following definition, which is a special case of Definition \ref{def_deuberset},
gives the flavor of the idea behind generalized $(m,p,c)$-sets.
\begin{definition}\label{def_introdeuber}
  Let $m,d\in\N$, let $c:\Z^d\to\Z^d$ be an additive homomorphism, and let $\F=(F_1,\dots,F_m)$ be an $m$-tuple\footnote{Throughout this paper, we use the arrow notation $\F$ for tuples of sets of functions. For tuples of semigroup elements $(s_1,\dots,s_d)\in G^d$ we will use boldface: ${\bf s}=(s_1,\dots,s_d)$.} where for each $i=1,\dots,m$, $F_i$ is a finite family of polynomial functions of the form $f:\Z^{id}\to\Z^d$ such that $f(0)=0$.
  Finally, let ${\bf s}=(s_0,\dots,s_m)\in(\Z^d\setminus\{0\})^{m+1}$.
  Then the $(m,\F,c)$-set generated by ${\bf s}$ is defined by
  $$D(m,\F,c;{\bf s}):=\left\{\begin{array}{lr}c(s_0)&\\f(s_0)+c(s_1),& f\in F_1\\f(s_0,s_1)+c(s_2),& f\in F_2\\ \vdots&\vdots\\f(s_0,\dots,s_{m-1})+c(s_m),& f\in F_m
\end{array}\right\}$$
\end{definition}
Note that, when $d=1$ and when all the polynomials are linear, Definition \ref{def_introdeuber} reduces to Definition \ref{def_intrompc}. Indeed, given a triple $(m,p,c)\in\N^3$ one can let $\tilde c$ be the map defined by $\tilde c:x\mapsto cx$ for each $x\in\Z$ and, for each $j=1,\dots,m$, let $F_j$ be the set of all maps ${\bf x}\mapsto\langle {\bf x},\xi\rangle$ with $\xi\in\{-p,\dots,p\}^j$ and ${\bf x}\in\Z^j$. Finally make $\F=(F_1,\dots,F_m)$.
Then an $(m,p,c)$-set is a $(m,\F,\tilde c)$-set.

The following is a multidimensional generalization of Theorem \ref{thm_introdeuber}.
 The first (resp. second) part of Theorem \ref{thm_introgeneraldeuber} is a special case of the more technical Corollary \ref{thm_statements_main} (resp. Theorem \ref{thm_finitistic}), which is proved in Section \ref{section_mpcincentral} (resp. \ref{section_finitistic}), and extends the first (resp. second) part of Theorem \ref{thm_introdeuber}.

\begin{theorem}\label{thm_introgeneraldeuber}Let $d,m\in\N$, let $c:\Z^d\to\Z^d$ be a scalar homomorphism (i.e. $c(x_1,\dots,x_d)=(ax_1,\dots,ax_d)$ for some $a\in\Z\setminus\{0\}$) and let $\F=(F_1,\dots,F_m)$ be an $m$-tuple where, for each $i=1,\dots,m$, $F_i$ is a finite family of homomorphisms from $(\Z^d)^i$ to $\Z^d$.

  \begin{enumerate}
    \item For any finite partition $\Z^d=\bigcup_{i=1}^rC_i$, one of the $C_i$ contains $D(m,\F,c;{\bf s})$ for some ${\bf s}=(s_0,\dots,s_m)\in(\Z^d\setminus\{0\})^{m+1}$.
    \item For any $r\in\N$, there exist $M\in\N$, a scalar homomorphism $C:\Z^d\to\Z^d$ and an $M$-tuple $\vH=(H_1,\dots,H_m)$ where $H_i$ is a finite family of homomorphisms from $\Z^{di}$ to $\Z^d$ such that for any ${\bf S}\in(\Z^d\setminus\{0\})^{M+1}$ and any $r$-coloring of $D(M,\vH,C,{\bf S})$, there exists ${\bf s}\in(\Z^d\setminus\{0\})^{m+1}$ such that $D(m,\F,c;{\bf s})$ is a subset of $D(M,\vH,C,{\bf S})$ and is monochromatic.
  \end{enumerate}
\end{theorem}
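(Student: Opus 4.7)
The plan is to prove Part (1) via ultrafilter methods in the Stone-\v{C}ech compactification $\beta\Z^d$ -- specifically, using a minimal idempotent in the smallest ideal $K(\beta\Z^d)$ -- and to derive Part (2) from Part (1) by a compactness argument. For Part (1), the key reformulation I would aim at is: every member $C$ of a minimal idempotent $p\in K(\beta\Z^d)$ already contains $D(m,\F,c;{\bf s})$ for some ${\bf s}\in(\Z^d\setminus\{0\})^{m+1}$. Since at least one cell of any finite partition of $\Z^d$ must belong to $p$, this implies Part (1).

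To construct ${\bf s}$, fix such a $p$ and $C\in p$. Because $c$ is multiplication by a nonzero scalar $a$, the image $c(\Z^d)=a\Z^d$ is a finite-index subgroup; idempotence of $p$ forces $a\Z^d\in p$, and hence $c^{-1}(C)\in p$. Now build $s_0,\dots,s_m$ recursively. Having chosen $s_0,\dots,s_{k-1}$, let
\[ A_k \;=\; \bigcap_{f\in F_k}\, c^{-1}\bigl(C-f(s_0,\dots,s_{k-1})\bigr), \]
which is a finite intersection of shifts of $c^{-1}(C)$. By the standard consequence of idempotence that $\{x:C-x\in p\}\in p$, combined with finiteness of $F_k$, one checks $A_k\in p$. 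Before selecting $s_k$, pass to the $p$-derivative $A_k^\ast=\{x\in A_k:A_k-x\in p\}\in p$; then any $s_k\in A_k^\ast\setminus\{0\}$ satisfies the requirements of stage $k$ while preserving room for stages $k+1,\dots,m$. This is essentially the central-sets scheme specialized from polynomial to homomorphism configurations.

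For Part (2), I would argue by compactness. Assume the conclusion fails: for each candidate $(M,\vH,C)$ there is some ``bad'' ${\bf S}$ and an $r$-coloring $\chi_{M,\vH,C}$ of $D(M,\vH,C;{\bf S})$ admitting no monochromatic $(m,\F,c)$-set of the required form. Organize the candidates into a directed net along which $\vH$ is progressively enriched -- so that every prescribed $D(m,\F,c;{\bf s})$ is eventually contained in some $D(M,\vH,C;{\bf S})$ of the net -- and take an ultrafilter limit of the $\chi_{M,\vH,C}$ in the compact space of $r$-colorings of $\Z^d$. The resulting limit coloring admits no monochromatic $D(m,\F,c;{\bf s})$, contradicting Part (1).

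The hardest step, in my view, lies in Part (1): verifying uniformly across the finite families $F_k$ and the finite-index image $a\Z^d=c(\Z^d)$ that the central property is preserved by the intersections defining $A_k$, and that the derivative $A_k^\ast$ remains large enough to drive every subsequent stage of the recursion. This hinges on the standard interplay between minimal idempotents and finite-index subgroups, together with additivity of the maps $f\in F_k$; once this is secured, Part (2) reduces to routine compactness, provided $\vH$ is constructed by closing $\F$ under the homomorphic compositions that arise in the iterative scheme.
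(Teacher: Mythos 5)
Both halves of your proposal contain genuine gaps. In Part (1), the step ``$A_k\in p$'' does not follow from idempotence. The standard fact gives $C^*=\{x\in C: C-x\in p\}\in p$ and $C-x\in p$ for $x\in C^*$; but the shifts you need at stage $k$ are by the elements $f(s_0,\dots,s_{k-1})$ for $f\in F_k$, and these are images of the raw generators under arbitrary homomorphisms, not elements of $C^*$ (nor of $C$). There is no reason why $C-f(s_0,\dots,s_{k-1})$ should belong to $p$, and choosing $s_k$ in a derivative $A_k^*$ does not help at stage $k+1$, where the required shifts are by the new values $f(s_0,\dots,s_k)$, $f\in F_{k+1}$. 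To see that something essential is missing, take $d=1$, $m=1$, $c=\mathrm{id}$ and $F_1=\{x\mapsto ix: 0\le i\le k\}$: your argument would then produce a monochromatic arithmetic progression together with its common difference using idempotence alone, i.e.\ it would prove van der Waerden's theorem with no combinatorial input. This is exactly the point where the paper injects a simultaneous-returns theorem: Proposition \ref{prop_IPvdWgroups} (proved via the Hales--Jewett theorem, using that members of minimal idempotents are piecewise syndetic) shows that $\HH(G^j,G)$ is an R-family, and the generalized central sets theorem (Theorem \ref{thm_GCST}) then supplies, for an IP-set of candidate generator tuples, a common shift $x$ with $x+f(\cdot)\in C$ for all $f\in F_k$; that shift is what becomes $c(s_k)$. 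A recursion that picks single points $s_k$ from members of $p$ cannot close.

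Part (2) does not reduce to Part (1) by compactness. The conclusion of Part (2) is universally quantified over the generators ${\bf S}$: one fixed $(M,\vH,C)$ must work for every host set $D(M,\vH,C;{\bf S})$, and these hosts are sparse, structured subsets of $\Z^d$. Your ``bad'' colorings live only on such scattered finite sets; an ultrafilter limit of them is not a coloring of $\Z^d$ that is bad in the sense of Part (1), and a monochromatic $(m,\F,c)$-set produced by Part (1) for the limit coloring need not lie inside any of the hosts, so no contradiction arises. (This is the same reason Rado's conjecture---that richness is partition regular---does not follow from Rado's theorem by compactness; Part (2) is the genuinely harder half, just as in Deuber's work.) The paper instead proves Part (2) constructively as Theorem \ref{thm_finitistic}: Lemma \ref{lemma_induction} uses the Hales--Jewett theorem to build a shape $(M,\vH,C)$ in which one additional line of a sub-$(m,\F,c)$-set can be forced monochromatic, and iterating this step followed by a pigeonhole argument on lines finishes the proof. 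An argument of that finitistic type is needed; compactness applied to Part (1) cannot supply it.
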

We remark that when all the homomorphisms are scalar, Theorem \ref{thm_introgeneraldeuber} can be derived from \cite{Deuber75}.

The following result, which is a special case of part (2) of Corollary \ref{thm_statements_main} below, can be viewed as a polynomial extension of part (1) of Theorem \ref{thm_introdeuber}.
\begin{theorem}\label{cor_introduction}
  Let $d,m\in\N$ and, for each $i=1,2,\dots,m$ let $F_i$ be a finite set of polynomials of the form $f:(\Z^d)^i\to\Z^d$ such that $f({\bf 0})={\bf0}$.
  Let $\F=(F_1,\dots,F_m)$ and let $c:\Z^d\to\Z^d$ be a scalar homomorphism.
  For any finite coloring of $\Z^d$, there exists ${\bf s}\in(\Z\setminus\{0\})^{m+1}$ such that the set $D(m,\F,c;{\bf s})$ is monochromatic.
\end{theorem}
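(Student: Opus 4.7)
The plan is to prove the stronger central-set statement: every central subset of $\Z^d$ contains a set of the form $D(m,\F,c;{\bf s})$. Given any finite coloring of $\Z^d$, I would fix a minimal idempotent $p\in\beta(\Z^d,+)$; one color class $A$ lies in $p$ and is therefore central, and the theorem follows from the claim. A coset argument using $p+p=p$ shows that the subgroup $a\Z^d$ (where $c(x)=ax$) lies in $p$, so after replacing $A$ by $A\cap a\Z^d$ I may assume $A\subseteq a\Z^d$; this will ensure that the nonzero preimages of $A$ under the scalar map $c$ are available whenever needed.

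To find ${\bf s}$ inside $A$, I would encode the configuration as a finite family of polynomial maps $q_0,\,q_{1,f},\dots,q_{m,f}\colon (\Z^d)^{m+1}\to\Z^d$ given by $q_0(y_0,\dots,y_m):=c(y_0)$ and $q_{i,f}(y_0,\dots,y_m):=c(y_i)+f(y_0,\dots,y_{i-1})$ for $f\in F_i$. Each has vanishing constant term. Exhibiting $s_0,\dots,s_m\in\Z^d\setminus\{0\}$ with $q(s_0,\dots,s_m)\in A$ for every $q$ in this family is precisely the assertion that $D(m,\F,c;{\bf s})\subseteq A$. One would then apply the multivariate polynomial version of Furstenberg's central sets theorem (a main technical result of the paper) to this finite family of polynomial maps and the central set $A$, producing the required ${\bf s}$; nonvanishing of the components is arranged by passing to a cofinite subset at each selection step, using that any set in a minimal idempotent is infinite.

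The main obstacle lies entirely in the polynomial central sets theorem: one must show that an arbitrary central set absorbs every finite family of multivariate polynomial maps with no constant term, where the variables of the maps must be chosen \emph{simultaneously} (and compatibly with nested $\star$-sets $A^\star:=\{x\in A:-x+A\in p\}$, so that the construction can be carried out in stages). In Deuber's original linear setting each $v_f:=f(s_0,\dots,s_{i-1})$ is a $\Z$-linear combination of the previously chosen $s_j$, and the shifted sets $A-v_f$ remain in $p$ by routine idempotent-ultrafilter arithmetic; in the polynomial setting one instead needs genuine polynomial recurrence in central sets, obtained by combining polynomial Hales--Jewett-type machinery in the spirit of Bergelson--Leibman with the idempotent-ultrafilter structure on $\beta(\Z^d,+)$. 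Once that theorem is in hand, the remainder of the proof is bookkeeping for the polynomial encoding $\{q_0,q_{i,f}\}$ described above.
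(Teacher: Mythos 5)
Your reduction to central sets and the encoding of $D(m,\F,c;{\bf s})$ as the value set of the family $\{q_0\}\cup\{q_{i,f}\}$ are both fine, but the key step --- ``apply the multivariate polynomial central sets theorem to this family and read off ${\bf s}$'' --- does not go through, and this is where the real content of the theorem lives. Every version of the polynomial central sets theorem (Theorem \ref{thm_PCSTAG} in the paper), and likewise the IP polynomial van der Waerden and Szemer\'edi theorems, produces configurations of the form $x+f(z)$ with a \emph{free} additive shift $x$ chosen by the theorem; none of them produces a point ${\bf s}$ with $q({\bf s})\in A$ for all $q$ in a given finite family of polynomials vanishing at ${\bf 0}$. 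That shift-free statement is false in general: the set $T=\bigcup_n[4^n,2\cdot4^n)\subset\Z$ is thick, hence central, yet contains no pair $\{s,2s\}$, so it fails joint recurrence even for the linear family $\{y\mapsto y,\;y\mapsto 2y\}$. In $D(m,\F,c;{\bf s})$ there is no free shift to absorb the discrepancy: $c(s_0)$ must itself lie in $A$, and the ``shift'' on the $i$-th line is the constrained quantity $c(s_i)$, whose preimage $s_i$ also feeds into all later lines. So one cannot treat $\{q_0,q_{i,f}\}$ as an unstructured $(m+1)$-variable family and invoke a single black-box recurrence theorem.

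The mechanism your sketch is missing is exactly what the paper's Theorem \ref{thm_generalmain} supplies: an induction on $m$ in which, at stage $j$, the abstract central sets theorem (Theorem \ref{thm_GCST}) is applied with the family $F_j$ and, as input IP-set, the IP-set of previously constructed generator tuples $({\bf s}^{(j-1)}_\alpha)_{\alpha\in{\mathcal F}}$. The theorem returns an entire \emph{IP-set} of admissible shifts $(x_\alpha)$, and the IP-regularity of $c$ (a consequence of $c(\Z^d)$ having finite index) is used to pass to a sub-IP-set of shifts of the form $c(y_\beta)$, so that one may set $s_j=y_\beta$ and still retain an IP-set of witnesses with which to continue the induction. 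Your final paragraph gestures at a staged construction with $\star$-sets, which is the right instinct, but the proposal as written collapses the stages into one simultaneous application; the triangular structure of the configuration, and the fact that the central sets theorem delivers its shifts in IP-sets (so they can be matched to values of $c$), are both essential and absent. (Your observation that $a\Z^d\in p$ for every idempotent $p$ is correct but is not by itself a substitute for IP-regularity in the inductive step.)
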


Van der Waerden's theorem was generalized to higher dimensions by T. Gr\"unwald (Galai)\footnote{R. Rado attributed this result to G. Gr\"unwald in \cite[p. 123]{Rado43}. Rado, however, had in mind T. Gr\"unwald who never published his proof and later changed his name to Galai.
(G\'eza Gr\"unwald was a talented young Hungarian analyst who was murdered in 1943, see http://www.math.technion.ac.il/hat/people/obits).}.
The following corollary is a simultaneous generalization of Brauer's theorem and of the multidimensional extension of van der Waerden's theorem. (It will be proved in Section \ref{section_statements} after Corollary \ref{thm_statements_main}).

\begin{corollary}\label{cor_intro_multidimbrauer}
Let $d\in\N$ and let $f:\N^d\to\N$ be a semigroup homomorphism\footnote{Here and in the rest of this paper, a semigroup homomorphism is a map $f:G\to H$, where $G$ and $H$ are commutative semigroups, such that $f(a+b)=f(a)+f(b)$ for all $a,b\in G$.} (here $\N$ is a shorthand for $(\N,+)$).
For any finite partition of $\N^d$ and any $k\in\N$ there exist $a,b\in\N^d$ such that the set
\begin{equation}
  \{b\}\cup\Big\{a+\big(i_1f(b),\cdots,i_df(b)\big):0\leq i_1,\dots,i_d\leq k\Big\}
\end{equation}
is contained in a single cell of the partition.
\end{corollary}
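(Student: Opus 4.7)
The plan is to recognize the configuration appearing in the statement as a $(1,\F,c)$-set in the sense of Definition~\ref{def_introdeuber} and then invoke the semigroup version of Theorem~\ref{thm_introgeneraldeuber}(1) (namely the more general Corollary~\ref{thm_statements_main}) in the commutative semigroup $(\N^d,+)$.

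Concretely, set $m=1$ and let $c:\Z^d\to\Z^d$ be the identity, which is a scalar homomorphism with scaling constant $1$. Since $f:\N^d\to\N$ is a semigroup homomorphism, it extends uniquely to an additive homomorphism $f:\Z^d\to\Z$. For each tuple ${\bf i}=(i_1,\ldots,i_d)\in\{0,1,\ldots,k\}^d$, define $g_{\bf i}:\Z^d\to\Z^d$ by
\[g_{\bf i}(x)=(i_1f(x),\ldots,i_df(x));\]
this is a homomorphism because $f$ is one. Put $F_1=\{g_{\bf i}:{\bf i}\in\{0,1,\ldots,k\}^d\}$ and $\F=(F_1)$. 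Note that $F_1$ is finite and every element sends $0$ to $0$, so the hypotheses of Definition~\ref{def_introdeuber} are met.

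Given a finite partition $\N^d=\bigcup_{i=1}^{r}C_i$, apply Corollary~\ref{thm_statements_main} in the semigroup $(\N^d,+)$ with these choices of $m$, $\F$, and $c$ to obtain a pair $(s_0,s_1)\in(\N^d\setminus\{0\})^2$ for which
\[D(1,\F,c;(s_0,s_1))=\{s_0\}\cup\{g_{\bf i}(s_0)+s_1:{\bf i}\in\{0,1,\ldots,k\}^d\}\]
is contained in a single cell $C_j$. Setting $b=s_0$ and $a=s_1$, this Deuber set is exactly
\[\{b\}\cup\big\{a+(i_1f(b),\ldots,i_df(b)):0\leq i_1,\ldots,i_d\leq k\big\},\]
which is the configuration required by the corollary.

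The one subtle point is that Theorem~\ref{thm_introgeneraldeuber}(1) as phrased in the introduction partitions $\Z^d$, whereas we need the generators $s_0,s_1$ to lie in $\N^d$ so that $f(b)\in\N$ and hence $a+(i_1f(b),\ldots,i_df(b))\in\N^d$ for all $i_j\geq 0$. This is precisely why we must appeal to the semigroup generalization (Corollary~\ref{thm_statements_main}), which is set up to handle arbitrary countable commutative semigroups such as $(\N^d,+)$; once that result is in hand, the rest is simply unwinding the definitions.
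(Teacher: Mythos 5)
Your proof is correct and follows essentially the same route as the paper: the authors also apply part (1) of Corollary \ref{thm_statements_main} in the semigroup $G=\N^d$ with $m=1$, $c$ the identity, and $F_1$ the set of homomorphisms ${\bf x}\mapsto(i_1f({\bf x}),\dots,i_df({\bf x}))$, then set $b=s_0$, $a=s_1$. The brief detour through $\Z^d$ in your setup is unnecessary (the paper works directly with $F_1\subset\HH(\N^d,\N^d)$), but you correctly identify in your final paragraph that the semigroup version is the right tool, which is exactly what the paper does.
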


Observe that when $d=1$ this result reduces to Brauer's theorem.

A polynomial generalization of the multidimensional van der Waerden theorem was established in \cite{Bergelson_Leibman96}.
An immediate corollary of Theorem \ref{cor_introduction} (corresponding to $m=1$) is the following common generalization of Brauer's theorem and the multidimensional polynomial van der Waerden theorem.

\begin{corollary}\label{cor_polybrauer}
Let $d\in\N$ and let $F$ be a finite set of polynomials $f:\Z^d\to\Z$ such that $f(0)=0$.
For any finite partition of $\Z^d$ there exist $a,b\in(\Z\setminus\{0\})^d$ such that the set
$$\{b\}\cup\Big\{a+\big(f_1(b),\cdots,f_d(b)\big):f_1,\dots,f_d\in F\Big\}$$
is contained in a single cell of the partition.
\end{corollary}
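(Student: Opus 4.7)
The plan is to derive the statement directly from the $m=1$ instance of Theorem \ref{cor_introduction} by choosing the input data carefully. I would take $c:\Z^d\to\Z^d$ to be the identity map, which is a scalar homomorphism (with scalar $1$). For the family $F_1$, I would take the finite collection of all polynomial maps $\Z^d\to\Z^d$ of the form
$$P_{f_1,\dots,f_d}(x)\;=\;\bigl(f_1(x),\dots,f_d(x)\bigr),\qquad (f_1,\dots,f_d)\in F^d.$$
Since $F$ is finite, $F_1$ is finite, and each $P_{f_1,\dots,f_d}$ satisfies $P_{f_1,\dots,f_d}(0)=0$ because every component polynomial does. Thus the hypotheses of Theorem \ref{cor_introduction} are met with $m=1$, $\F=(F_1)$, and $c$ as above.

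Given any finite coloring of $\Z^d$, Theorem \ref{cor_introduction} then yields a pair ${\bf s}=(s_0,s_1)$ (with both entries having nonzero coordinates) for which $D(1,\F,c;{\bf s})$ is monochromatic. Unwinding Definition \ref{def_introdeuber} with these choices, this set equals
$$\{s_0\}\;\cup\;\bigl\{s_1+\bigl(f_1(s_0),\dots,f_d(s_0)\bigr):f_1,\dots,f_d\in F\bigr\},$$
which is precisely the configuration in the corollary after setting $b:=s_0$ and $a:=s_1$.

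I do not foresee any genuine obstacle; the content of the argument is a direct translation between the abstract Deuber-type set $D(m,\F,c;{\bf s})$ and the concrete shape requested in the corollary. The only verifications needed are that the cardinality of $F_1$ is finite (automatic, being bounded by $|F|^d$) and that packaging the scalar polynomials into vector-valued ones preserves the origin-fixing property (also automatic).
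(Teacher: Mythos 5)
Your proposal is correct and is exactly the paper's intended argument: the paper presents this as an immediate consequence of Theorem \ref{cor_introduction} with $m=1$, and your choice of $c=\mathrm{id}$ and $F_1=\{P_{f_1,\dots,f_d}:(f_1,\dots,f_d)\in F^d\}$ is the natural (and evidently the intended) instantiation. The only cosmetic point is the paper's own notational wobble between $(\Z\setminus\{0\})^d$ and $\Z^d\setminus\{0\}$ for where $a,b$ live, which your reading handles consistently.
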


This corollary can also be deduced from \cite[Theorem 0.11]{Bergelson_McCutcheon00}.
For $m=2$, Theorem \ref{cor_introduction} (and the more general Corollary \ref{thm_statements_main}) provide new classes of partition regular configurations.
For example, it follows from Theorem \ref{cor_introduction} that for any finite coloring of $\N$ there exists a monochromatic quadruple $\{x,y+x^2,z,z+y^2\}$.
To see this, apply Theorem \ref{cor_introduction} with $d=1$, $m=2$, $F_1$ containing only the polynomial $x\mapsto x^2$ and $F_2$ comprised of the polynomials $(x,y)\mapsto0$ and $(x,y)\mapsto y^2$.

A more general corollary of Theorem \ref{cor_introduction} is the following result, which involves a ``chain of configurations'' of the form $\{x,y,x+f(y)\}$ where $f$ is a polynomial.

\begin{corollary}\label{cor_IntroChainSarkozy}
Let $k\in\N$ and let $f_1,\dots,f_k\in\Z[x]$ be polynomials with $f_i(0)=0$.
Then for any finite coloring of $\Z$ there exist $x_0,x_1,\dots,x_k,a_1,a_2,\dots,a_k\in\Z\setminus\{0\}$, all with the same color, satisfying
$$\left\{\begin{array}{ccc}a_1-x_1&=&f_1(x_0)\\ a_2-x_2&=&f_2(x_1)\\ \vdots&&\vdots\\ a_k-x_k&=&f_k(x_{k-1})\end{array}\right.$$
\end{corollary}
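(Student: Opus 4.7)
The plan is to reduce the statement to a direct application of Theorem \ref{cor_introduction} with $d=1$ and $m=k$. I would take $c:\Z\to\Z$ to be the identity homomorphism, and for each $i = 1,\dots,k$ define a two-element family $F_i := \{0, g_i\}$ of polynomials from $\Z^i$ to $\Z$, where $0$ denotes the identically zero map and $g_i(y_0,\dots,y_{i-1}) := f_i(y_{i-1})$. Both members of $F_i$ vanish at the origin because $f_i(0) = 0$. Set $\F := (F_1, \dots, F_k)$.

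Unpacking Definition \ref{def_introdeuber}, for any tuple $\mathbf{s} = (s_0, \dots, s_k)$ the set $D(k, \F, c; \mathbf{s})$ contains $s_0$ and, for each $i = 1, \dots, k$, contains both $s_i$ (coming from the summand $0 \in F_i$) and $s_i + f_i(s_{i-1})$ (coming from $g_i \in F_i$). Given a finite coloring of $\Z$, I would first refine it by isolating $\{0\}$ as its own color class. Theorem \ref{cor_introduction} applied to the refined coloring then yields some $\mathbf{s} \in (\Z \setminus \{0\})^{k+1}$ for which $D(k, \F, c; \mathbf{s})$ is monochromatic. Since $s_0 \neq 0$ lies in this set, its color class cannot be the singleton $\{0\}$, so every element of $D(k, \F, c; \mathbf{s})$ is nonzero.

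Setting $x_i := s_i$ for $i = 0, 1, \dots, k$ and $a_i := s_i + f_i(s_{i-1})$ for $i = 1, \dots, k$, these $2k + 1$ integers are all nonzero, lie in a single cell of the \emph{original} coloring, and by construction satisfy $a_i - x_i = f_i(x_{i-1})$ for every $i$. The argument has no substantive obstacle: it is essentially a bookkeeping exercise of choosing $\F$ so that each family $F_i$ contributes both a ``trivial'' and a ``twisted'' summand, thereby forcing both $x_i$ and the shift $a_i = x_i + f_i(x_{i-1})$ into the monochromatic Deuber configuration, while the refinement step isolating $0$ is the standard trick ensuring these elements do not vanish.
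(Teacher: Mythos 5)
Your proof is correct and follows exactly the paper's own reduction: the authors likewise apply Theorem \ref{cor_introduction} with $d=1$, $m=k$, the identity for $c$, and each $F_i$ consisting of the zero polynomial together with $(x_0,\dots,x_{i-1})\mapsto f_i(x_{i-1})$. Your additional step of isolating $\{0\}$ as its own color class to guarantee that the $a_i$ are nonzero is a detail the paper leaves implicit, and it is handled correctly.
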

Corollary \ref{cor_IntroChainSarkozy} follows from Theorem \ref{cor_introduction} by putting $d=1$, $m=k$ and, for each $i=1,\dots,k$, letting $F_i$ consist of the zero polynomial and the polynomial $(x_0,\dots,x_{i-1})\mapsto f_i(x_{i-1})$.

Another new result obtained in this paper is a polynomial extension of Furstenberg's central sets theorem \cite[Proposition 8.21]{Furstenberg81} which is of independent interest (see Theorem \ref{thm_GCST} below) and is essential to the proofs of some combinatorial results below.
We postpone its formulation to a later section as it requires some additional definitions to state.
Other important tools employed in our proofs include the polynomial Hales-Jewett theorem \cite{Bergelson_Leibman96} and the IP-polynomial Szemer\'edi theorem \cite{Bergelson_McCutcheon00}.

The paper is organized as follows.
In Section \ref{sec_preliminares} we review the necessary background material.
In Section \ref{section_statements} we give precise definitions and formulations of our results.
In Sections \ref{section_mpcincentral} and \ref{section_finitistic} we prove our generalizations of Theorem \ref{thm_introdeuber}, namely Corollary \ref{thm_statements_main} and Theorem \ref{thm_finitistic}.
In Section \ref{sec_mfcsystems} we extend results of Deuber, Hindman and Lefmann on $(m,p,c)$-systems, which are common extensions of Deuber's result and Hindman's theorem\cite{Deuber_Hindman87,Hindman_Lefmann93}.
Finally, in Section \ref{section_rado} we derive, in the spirit of Rado's theorem, a rather general sufficient condition for partition regularity of a system of linear equations in a countable commutative semigroup.
\subsection*{Acknowledgement}
The authors wish to thank Donald Robertson for multiple useful remarks on an early draft of the paper.

\section{Preliminaries}\label{sec_preliminares}

\subsection{IP-sets}
Given an infinite set $X$, we denote by ${\mathcal F}(X)$ the family of all finite non-empty subsets of $X$, i.e., ${\mathcal F}(X):=\{\alpha\subset X:0<|\alpha|<\infty\}$.
We denote by ${\mathcal  F}={\mathcal F}(\N)$ the family of all non-empty finite subsets of $\N$.
Let $G$ be a countable commutative semigroup and let $(x_n)_{n\in\N}$ be an injective sequence in $G$.
For each $\alpha\in{\mathcal  F}$ define $x_\alpha=\sum_{n\in\alpha}x_n$.
The \emph{IP-set} generated by $(x_n)_{n\in\N}$ is the set $FS(x_n)=\{x_\alpha:\alpha\in{\mathcal  F}\}$.
Clearly $x_{\alpha\cup\beta}=x_\alpha+x_\beta$ for any disjoint $\alpha,\beta\in{\mathcal  F}$. Moreover, if $(y_\alpha)_{\alpha\in{\mathcal  F}}$ is any `sequence' indexed by ${\mathcal  F}$ such that $x_{\alpha\cup\beta}=x_\alpha+x_\beta$ for any disjoint $\alpha,\beta\in{\mathcal  F}$, then the set $\{y_\alpha:\alpha\in{\mathcal  F}\}$ is an IP-set (generated by $(y_{\{n\}})_{n\in\N})$).
For this reason we will denote IP-sets by $(y_\alpha)_{\alpha\in{\mathcal  F}}$, with the understanding that they are generated by the singletons $y_n$, $n\in\N$.

\begin{definition}
Let $(x_\alpha)_{\alpha\in{\mathcal  F}},(y_\alpha)_{\alpha\in{\mathcal  F}}$ be IP-sets in a countable commutative semigroup $G$.
\begin{enumerate}
  \item For $\alpha,\beta\in{\mathcal  F}$ we write $\alpha<\beta$ as a shortcut to $\max_{i\in\alpha}i<\min_{j\in\beta}j$.
  \item We say that $(x_\alpha)_{\alpha\in{\mathcal  F}}$ is a sub-IP-set of $(y_\alpha)_{\alpha\in{\mathcal  F}}$ if there exist $\alpha_1<\alpha_2<\cdots$ in ${\mathcal F}$ such that $x_n=y_{\alpha_n}$ for all $n\in\N$.
\end{enumerate}
\end{definition}

\subsection{Central sets and $D$-sets}
Central sets were introduced by Furstenberg in $(\N,+)$ in \cite{Furstenberg81}.
A characterization in terms of ultrafilters was discovered later \cite{Bergelson_Hindman90}, and this spurred the study of central sets.
For the reader's convenience we will state some of the basic properties of ultrafilters that we will use.
The reader will find missing details in \cite{Bergelson10} or \cite{Hindman_Strauss98}.
\begin{definition}
A \emph{filter} on a countable set $G$ is a non-empty family $p$ of subsets of $G$ such that
\begin{enumerate}
\item $\emptyset\notin p$.
\item If $A\in p$ and $A\subset B$ then $B\in p$.
\item If $A$ and $B$ are both in $p$ then $A\cap B\in p$.
\end{enumerate}
If in addition $p$ satisfies the following condition, then $p$ is an \emph{ultrafilter}.
\begin{enumerate}\setcounter{enumi}{3}
\item $A\in p\iff (G\setminus A)\notin p$.
\end{enumerate}
\end{definition}

\begin{remark}\label{rmrk_ultrafilter}
Equivalently, an ultrafilter is a family $p$ of subsets of $G$ such that for any finite partition of $G$, exactly one of the cells of the partition belongs to $p$.
\end{remark}
The simplest example of an ultrafilter is that of a \emph{principal ultrafilter} $p_g$ generated by a point $g\in G$ and defined by $A\in p_g\iff g\in A$.
In fact, these are the only explicit examples; the existence of non principal ultrafilters needs some form of the axiom of choice.

Ultrafilters are maximal filters (with respect to the inclusion relation) and hence, by Zorn's lemma, any filter is contained in an ultrafilter.
The set of all ultrafilters on $G$ is denoted by $\beta G$ and can be identified with the Stone-\v{C}ech compactification of the (discrete) space $G$ (see, for example, Theorem 3.27 in \cite{Hindman_Strauss98}). The space $\beta G$ is a compact Hausdorff space with the topology generated by the clopen sets
\begin{equation}\label{eq_closure}\overline{A}:=\{p\in\beta G:A\in p\}\qquad\qquad\forall A\subset G\end{equation}

One can naturally extend the semigroup operation from $G$ to $\beta G$.
When $A\subset G$ and $g\in G$ we use the notation $A-g:=\{h\in G:h+g\in A\}$.
Given $p,q\in\beta G$ we define
\begin{equation}\label{eq_ultrasum}p+q=\{A\subset G:\{g\in G:A-g\in p\}\in q\}
\end{equation}

The operation defined in (\ref{eq_ultrasum}) is associative (cf. Theorems 4.1, 4.4 and 4.12 in \cite{Hindman_Strauss98}) but, in general, not commutative.
An ultrafilter $p\in\beta G$ is called \emph{idempotent} if $p+p=p$.
By a theorem of Ellis \cite{Ellis58}, any semi-continuous compact semigroup contains an idempotent, so in particular for any countable semigroup $G$ there exists an idempotent ultrafilter in $\beta G$.
The interest in idempotent ultrafilters lies in the fact that any set belonging to such an ultrafilter contains an IP-set; this fact implies Hindman's celebrated theorem \cite{Hindman74} stating that for any finite partition of $\N$, one of the cells contains an IP-set (cf. \cite[Sections 2 and 3]{Bergelson10}).

A \emph{right ideal} in $\beta G$ is a subset $I\subset\beta G$ satisfying $I+\beta G\subset I$.
By Zorn's Lemma, there exist minimal (with respect to the inclusion relation) right ideals in $\beta G$.
A \emph{minimal} ultrafilter is an ultrafilter $p\in\beta G$ which belongs to some minimal right ideal.
To better understand the importance of minimal ultrafilters, we need the notion of piecewise syndetic sets.
\begin{definition}
  Let $G$ be a countable commutative semigroup and let $A\subset G$.
  \begin{enumerate}
    \item $A$ is a \emph{syndetic set} if finitely many shifts of $A$ cover $G$.
    More precisely, if there exists a finite set $F\subset G$ such that $G=\bigcup_{g\in F}(A-g)$.
    \item $A$ is a \emph{thick set} if it contains a shift of every finite set, i.e., if for every finite set $F\subset G$ there exists $g\in G$ such that $g+F\subset A$.
    \item $A$ is a \emph{piecewise syndetic set} if it is the intersection of a thick set with a syndetic set.
    In other words, $A$ is a piecewise syndetic set if there exists a finite set $F\subset G$ such that the union $\bigcup_{g\in F}(A-g)$ is thick.
  \end{enumerate}

\end{definition}
One can show that if $p\in\beta G$ is a minimal ultrafilter and $A\in p$, then $A$ is piecewise syndetic.
Conversely, for any piecewise syndetic set $A$, there exist minimal ultrafilters $p\in\beta G$ for which $A\in p$ (see, for example, \cite{Bergelson03}).
Of special importance among minimal ultrafilters are the \emph{minimal idempotent ultrafilters} i.e.\;ultrafilters which are simultaneously minimal and idempotent.
For any countable commutative semigroup $G$ there are minimal idempotent ultrafilters $p\in\beta G$.

\begin{definition}
  Let $G$ be a countable commutative semigroup and let $A\subset G$. We say that $A$ is a \emph{central set} if there exists a minimal idempotent ultrafilter $p\in\beta G$ such that $A\in p$.
\end{definition}

Since every countable commutative semigroup has a minimal idempotent, it follows from Remark \ref{rmrk_ultrafilter} that for every finite partition of a countable commutative semigroup, one of the cells is a central set.
Central sets are important in combinatorics because they are both IP-sets and piecewise syndetic sets; the combinatorial richness possessed by central sets is best illustrated by the central sets theorem.
\begin{theorem}[Central sets theorem]\label{thm_cst}
  Let $G$ be a countable commutative semigroup, let $j\in\N$, let $A\subset G$ be a central set and let $(y_\alpha)_{\alpha\in{\mathcal F}}$ be an IP-set in $G^j$.
  Then there exists an IP-set $(x_\beta)_{\beta\in{\mathcal F}}$ in $G$ and a sub-IP-set $(z_\beta)_{\beta\in{\mathcal F}}$ of $(y_\alpha)_{\alpha\in{\mathcal F}}$ such that
  $$\forall i\in\{1,\dots,j\}\qquad\forall\beta\in{\mathcal F}\qquad\qquad x_\beta+\pi_i(z_\beta)\in A$$
  where $\pi_i:G^j\to G$ is the projection onto the $i$-th coordinate.
\end{theorem}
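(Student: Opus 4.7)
Since $A$ is central, fix a minimal idempotent $p\in\beta G$ with $A\in p$ and set $A^\star:=\{g\in A:A-g\in p\}$; the standard consequences of $p+p=p$ give $A^\star\in p$ and $A^\star-g\in p$ for every $g\in A^\star$. The plan is to construct, by induction on $n$, distinct elements $x_n\in G$ and finite sets $\alpha_1<\alpha_2<\cdots$ in $\mathcal F$ satisfying the invariant
\[(\dagger)\qquad x_\beta+\pi_i(y_{\alpha_\beta})\in A^\star\quad\text{for every nonempty }\beta\subseteq\{1,\ldots,n\}\text{ and every }i\in\{1,\ldots,j\},\]
using the convention $\alpha_\beta:=\bigcup_{k\in\beta}\alpha_k$, so that $y_{\alpha_\beta}=\sum_{k\in\beta}y_{\alpha_k}$. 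The sequence $(x_\beta)_{\beta\in\mathcal F}$ will then be the required IP-set and $(y_{\alpha_\beta})_{\beta\in\mathcal F}$ will be a sub-IP-set of $(y_\alpha)_{\alpha\in\mathcal F}$ realizing the conclusion of the theorem.

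To carry out the inductive step, suppose $x_1,\ldots,x_n$ and $\alpha_1<\cdots<\alpha_n$ are already in place. Any new $\beta\subseteq\{1,\ldots,n+1\}$ containing $n+1$ decomposes as $\beta'\cup\{n+1\}$ with $\beta'\subseteq\{1,\ldots,n\}$, and commutativity of $G$ together with the IP-additivity give
\[x_\beta+\pi_i(y_{\alpha_\beta})=\bigl(x_{\beta'}+\pi_i(y_{\alpha_{\beta'}})\bigr)+\bigl(x_{n+1}+\pi_i(y_{\alpha_{n+1}})\bigr).\]
Since by the induction hypothesis the first summand already lies in $A^\star$ (or is absent when $\beta'=\emptyset$), the invariant at stage $n+1$ reduces to the single requirement $x_{n+1}+\pi_i(y_{\alpha_{n+1}})\in D$ for every $i$, where $D$ is the finite intersection
\[D:=A^\star\cap\bigcap_{\substack{\emptyset\neq\beta'\subseteq\{1,\ldots,n\}\\i'\in\{1,\ldots,j\}}}\bigl(A^\star-(x_{\beta'}+\pi_{i'}(y_{\alpha_{\beta'}}))\bigr)\in p.\]
It then remains to find $\alpha_{n+1}\in\mathcal F$ with $\min\alpha_{n+1}>\max\alpha_n$ and $x_{n+1}\in G$ (distinct from $x_1,\ldots,x_n$) satisfying $x_{n+1}+\pi_i(y_{\alpha_{n+1}})\in D$ simultaneously for all $i$.

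To locate such a pair I would introduce the closed subsemigroup
\[\Omega:=\bigcap_{N\geq 1}\overline{\{y_\alpha:\min\alpha\geq N\}}\subset\beta G^j,\]
which is nonempty by the finite-intersection property and closed under addition (since $y_{\gamma\cup\delta}=y_\gamma+y_\delta$ for disjoint $\gamma,\delta$, a sum of two ultrafilters in $\Omega$ still contains every IP-tail). By Ellis's theorem $\Omega$ carries an idempotent $q$. The problem then amounts to showing that the set
\[V:=\bigl\{v\in G^j:\textstyle\bigcap_{i=1}^j(D-v_i)\neq\emptyset\bigr\}\]
belongs to $q$, because $V\in q$ together with $\{y_\alpha:\min\alpha>\max\alpha_n\}\in q$ forces the intersection to be nonempty, providing $\alpha_{n+1}$, and then any element of $\bigcap_i(D-\pi_i(y_{\alpha_{n+1}}))$ gives $x_{n+1}$. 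Injectivity of $(x_n)$ is arranged by further restricting $D$ to avoid the finitely many previously chosen $x_k$'s, which preserves $D\in p$ since $p$ is non-principal.

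The \textbf{main obstacle} is the membership $V\in q$, which is the technical heart of the Central Sets Theorem. The standard approach introduces the diagonal embedding $\Delta:G\to G^j$, $\Delta(g)=(g,\ldots,g)$, a semigroup homomorphism whose dual $\Delta_\ast:\beta G\to\beta G^j$ is a continuous semigroup homomorphism. One shows $D^j=D\times\cdots\times D\in\Delta_\ast(p)+q$ by unwinding $D^j-\Delta(g)=\prod_i(D-g)$ and using the idempotent identity $\{g:D-g\in p\}\in p$; the minimality of $p$ is then exploited to select $q$ inside $\Omega$ so that $\Delta_\ast(p)+q=q$ (equivalently, $q$ is chosen in the minimal right ideal of $\Omega$ met by $\Delta_\ast(\beta G)+\Omega$), whence $D^j\in q$ and a fortiori $V\supseteq D^j$ lies in $q$. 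This is the direct commutative-semigroup analog of the ultrafilter proof of the Central Sets Theorem in $(\N,+)$ given in \cite{Bergelson_Hindman90, Hindman_Strauss98}; commutativity of $G$ enters only through the rearrangement performed in the second paragraph.
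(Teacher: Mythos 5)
Your inductive skeleton (pass to $A^\star=\{g\in A:A-g\in p\}$, decompose $\beta=\beta'\cup\{n+1\}$, reduce stage $n+1$ to finding a single pair $(x_{n+1},\alpha_{n+1})$ landing in a finite intersection $D\in p$) is correct and is exactly the induction the paper runs in its abstract Theorem \ref{thm_GCST} (of which Theorem \ref{thm_cst} is the special case $\Gamma=\HH(G^j,G)$, $F=\{\pi_1,\dots,\pi_j\}$; homomorphisms are ``licit'' with $\phi_z=f$, which is your rearrangement step). The problem is the step you yourself flag as the main obstacle: the single-return statement that for every $D\in p$ there exist $x$ and $\alpha$ (with $\min\alpha$ large) such that $x+\pi_i(y_\alpha)\in D$ for all $i$. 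Your argument for it does not close. First, the identity $D^j-\Delta(g)=\prod_i(D-g)$ together with $\{g:D-g\in p\}\in p$ only computes $D^j\in\Delta_*(p)+\Delta_*(p)=\Delta_*(p)$, which is trivial; with the paper's convention $A\in r+s\iff\{v:A-v\in r\}\in s$, the membership you actually need, $D^j\in\Delta_*(p)+q$, unwinds to $\{v:\bigcap_i(D-v_i)\in p\}\in q$ --- which is (a strengthening of) the very statement $V\in q$ you are trying to prove, so the step is circular as written. Second, the existence of an idempotent $q\in\Omega$ with $\Delta_*(p)+q=q$ is asserted but not justified: $q=\Delta_*(p)+s$ does satisfy $\Delta_*(p)+q=q$, but there is no reason for $\Delta_*(p)+\Omega$ to meet $\Omega$, since adding $\Delta_*(p)$ destroys membership in the IP-tails $\overline{\{y_\alpha:\min\alpha\geq N\}}$. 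The genuine ultrafilter proof of this step (Bergelson--Hindman, Hindman--Strauss) instead works in the product $(\beta G)^{j+1}$, shows that the closure of $I=\{(g,\,g+\pi_1(y_\alpha),\dots,g+\pi_j(y_\alpha))\}$ is an ideal of a suitable closed subsemigroup containing the diagonal, and uses that $(p,\dots,p)$ is minimal there; none of that checking appears in your sketch.

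The paper avoids this machinery entirely: it proves the single-return statement (Proposition \ref{prop_IPvdWgroups}, the R-family property of $\HH(G^j,G)$) combinatorially from the Hales--Jewett theorem, using only that a central set is piecewise syndetic --- one colors $F^n$ by which shift of $A$ the sum $g+f_1(y_1)+\cdots+f_n(y_n)$ lands in, and a monochromatic combinatorial line produces $x$ and $\alpha$. If you want to salvage your route, you must either import the full $(\beta G)^{j+1}$ ideal argument or replace your ``main obstacle'' paragraph with a Hales--Jewett (or IP van der Waerden) argument of this kind; as it stands that paragraph is the one place where the proof is not actually there.
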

This theorem was obtained by Furstenberg for the case $G=\N$ in \cite{Furstenberg81}.
In \cite{Bergelson_Hindman90}, Theorem \ref{thm_cst} was proved for certain classes of countable commutative semigroups, and an alternative, dynamical characterization of central sets for arbitrary countable commutative semigroups was establish, which hinted at the full generality of Theorem \ref{thm_cst}.
Theorem \ref{thm_cst} was obtained in full generality in \cite{Hindman_Maleki_Strauss96}.

By relaxing the definition of central set one obtains the notion of a $D$-set, which was introduced in \cite{Bergelson_Downarowicz08}.
While this notion makes sense in any countable amenable semigroup, we will only consider $D$-sets in $\Z^n$.
An ultrafilter $p\in\beta\Z^n$ is an \emph{essential idempotent} if it is an idempotent ultrafilter and every $A\in p$ has positive Banach upper density, i.e.

$$d^*(A)=\sup_{\{\Pi_k\}_{k\in\N}}\limsup_{k\to\infty}\frac{|A\cap\Pi_k|}{|\Pi_k|}>0$$
where the supremum is taken over all sequences of parallelepipeds
$$\Pi_k=[a_k^{(1)},b_k^{(1)}]\times\cdots\times[a_k^{(n)},b_k^{(n)}]\subset\Z^n;\ k\in\N$$
with $b_k^{(i)}-a_k^{(i)}\to\infty$ as $k\to\infty$ for all $1\leq i\leq n$.
\begin{definition}
   A set $A\subset\Z^n$ is a \emph{$D$-set} if there exists an essential idempotent $p\in\beta\Z$ such that $A\in p$.
\end{definition}
Every piecewise syndetic set has positive Banach upper density, therefore every central set is a $D$-set.
It was shown in \cite {Bergelson_Downarowicz08} that the converse is not true.
 However, the central sets theorem is true
 under the weaker assumption that $A$ is a $D$-set \cite{Beiglbock_Bergelson_Downarowicz_Fish09}.
Observe that, for every finite partition of $\Z$, one of the cells is a $D$-set.

\subsection{Some results we use}\label{subsec_statements}
In the course of our proofs we will take advantage of some powerful theorems.
For the convenience of the reader we list them in this subsection, but before we need a definition.
\begin{definition}\label{def_polmaps}
  Given a map $f:H\to G$ between countable commutative groups we say that $f$ is a \emph{polynomial map of degree 0} if it is constant.
We say that $f$ is a \emph{polynomial map of degree d}, $d\in\N$, if it is not a polynomial map of degree $d-1$ and for every $h\in H$, the map $x\mapsto f(x+h)-f(x)$ is a polynomial of degree $\leq d-1$.
Finally we denote by $\PP(G,H)$ the set of all polynomial maps $f:G\to H$ with $f(0)=0$.
\end{definition}
Note that homomorphisms are elements of $\PP(G,H)$ having degree $1$.
\begin{theorem}[Multidimensional IP polynomial Szemer\'edi theorem, \cite{Bergelson_McCutcheon00}, Theorem 0.10]\label{thm_mipsz}
Let $n\in\N$, let $B\subset\Z^n$ have positive Banach upper density, let $j\in\N$ and let $(y_\alpha)_{\alpha\in{\mathcal F}}$ be an IP-set in $(\Z^n)^j=\Z^{nj}$.
For any finite family $F\subset\PP(\Z^{nj},\Z^n)$ there exist $x\in\Z^n$ and $\alpha\in{\mathcal F}$ such that $x+f(y_\alpha)\in B$ for all $f\in F$.
\end{theorem}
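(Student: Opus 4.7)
The plan is to follow the ergodic-theoretic strategy pioneered by Bergelson and McCutcheon, reducing the combinatorial statement to a multiple recurrence result along polynomial IP-orbits of a measure-preserving system.

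First, I would invoke a version of the Furstenberg correspondence principle for $\Z^n$-actions: from $B\subset\Z^n$ with $d^*(B)>0$ one constructs a probability space $(X,\mu)$, a measure-preserving $\Z^n$-action $(T_g)_{g\in\Z^n}$, and a set $A\subset X$ with $\mu(A)=d^*(B)>0$ such that
\[
d^*\Bigl(B\cap\bigcap_{f\in F}\bigl(B-f(y_\alpha)\bigr)\Bigr)\ \geq\ \mu\Bigl(A\cap\bigcap_{f\in F}T^{-1}_{f(y_\alpha)}A\Bigr)
\]
for every $\alpha\in\mathcal F$. It therefore suffices to produce a single $\alpha$ making the right-hand side strictly positive.

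Second, I would establish positivity through an IP-polynomial mean ergodic theorem: along a suitable sub-IP-set of $(y_\alpha)_{\alpha\in\mathcal F}$, the product $\prod_{f\in F}T_{f(y_\alpha)}\mathbf{1}_A$ converges in $L^2(\mu)$ to a nonnegative limit whose integral against $\mathbf{1}_A$ is bounded below by a positive quantity depending only on $\mu(A)$ and $|F|$. The engine is the PET (polynomial exhaustion technique) induction of Bergelson, adapted to IP-polynomial families: repeated applications of an IP van der Corput lemma reduce an IP-average involving $T_{f(y_\alpha)}$ to IP-averages involving the \emph{derived} polynomials $y_\beta\mapsto f(y_{\alpha\cup\beta})-f(y_\beta)$, whose complexity is strictly smaller in an appropriate PET well-ordering. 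At the base of the induction, where all surviving polynomials are linear, one applies the IP-mean ergodic theorem via a minimal idempotent ultrafilter $p\in\beta\mathcal F$: the $p$-limit of $T_{y_\alpha}$ on $L^2(\mu)$ is the orthogonal projection onto $T$-invariant vectors, and cascading these projections back up the PET tree yields the desired global IP-limit. A direct manipulation of the resulting product of projections, combined with the Hindman--Milliken--Taylor extraction procedure used throughout the induction, produces a strictly positive lower bound, and any $\alpha$ at which the correlation exceeds half of that bound witnesses the conclusion.

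The principal obstacle is executing the PET induction in the IP setting: one must design a strict well-ordering on IP-polynomial tuples that provably decreases under the van der Corput reduction, while simultaneously verifying that passage to sub-IP-sets commutes suitably with the polynomial derivative operation $f\mapsto f(\cdot\cup\beta)-f(\beta)$, uniformly in $\beta$. Managing this bookkeeping of degrees, leading terms, and nested ultrafilter/sub-IP-set extractions is the technical heart of the argument and accounts for the substantial length of the proof in \cite{Bergelson_McCutcheon00}.
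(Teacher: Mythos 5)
The paper does not actually prove this statement: it is imported verbatim as Theorem 0.10 of \cite{Bergelson_McCutcheon00} and used as a black box (e.g.\ in Example \ref{example_essentialFamOfRec}), so there is no internal proof to compare yours against. Your sketch correctly identifies the overall architecture of the Bergelson--McCutcheon argument --- a Furstenberg-type correspondence principle reducing the combinatorial statement to an IP-polynomial multiple recurrence theorem for measure-preserving $\Z^n$-actions --- and that first step is fine.

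The gap is in your second step. PET induction combined with an IP van der Corput lemma shows that certain ``weakly mixing'' components contribute zero to the relevant IP-limits; it does not, by itself, produce positivity. Your claim that at the base of the induction one projects onto $T$-invariant vectors and that ``a direct manipulation of the resulting product of projections'' yields a strictly positive lower bound depending only on $\mu(A)$ and $|F|$ is where the argument breaks: the IP-limit of $T_{y_\alpha}$ along an idempotent ultrafilter is the orthogonal projection onto the rigid (almost periodic) part of the IP-system, not onto the invariant vectors, and a product of such projections applied to $\mathbf{1}_A$ need not correlate positively with $\mathbf{1}_A$ absent further structure. In \cite{Bergelson_McCutcheon00} positivity is obtained from Furstenberg--Katznelson-style structure theory: one builds a transfinite tower of primitive extensions and proves that the IP-polynomial recurrence property lifts through compact extensions (via coloring and Hales--Jewett-type arguments) and through weakly mixing extensions (via van der Corput and PET). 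That structure-theoretic lifting, which occupies most of their memoir, is absent from your outline, so as written the proposal essentially asserts the main recurrence theorem rather than proving it.
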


\begin{theorem}[IP polynomial van der Waerden theorem for abelian groups, cf. {\cite[Corolary 8.8]{Bergelson_Leibman99}}]\label{thm_ippolvdwtrue}
Let $G,H$ be countable abelian groups and let $F\subset\PP(H,G)$ be a finite subset.
Then for every finite partition $G=C_1\cup\cdots\cup C_r$ and every IP set $(y_\alpha)_{\alpha\in{\mathcal F}}$ in $H$ there exists $i\in\{1,\dots,r\}$, $a\in C_i$ and $\alpha\in{\mathcal F}$ such that $a+f(y_\alpha)\in C_i$ for every $f\in F$.
\end{theorem}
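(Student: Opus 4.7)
My plan is to reduce the partition statement to a density statement via minimal idempotent ultrafilters and then invoke the multidimensional IP polynomial Szemer\'edi theorem (Theorem \ref{thm_mipsz}). First, pick any minimal idempotent $p \in \beta G$; by Remark \ref{rmrk_ultrafilter} exactly one cell $C_i$ of the partition lies in $p$. By definition this $C_i$ is a central set in $G$, hence is piecewise syndetic and so has positive Banach upper density along any reasonable F\o lner sequence for $G$. It therefore suffices to prove the following density version: for every $C \subset G$ of positive Banach upper density, every IP-set $(y_\alpha)_{\alpha \in {\mathcal F}}$ in $H$, and every finite $F \subset \PP(H,G)$, there exist $a \in C$ and $\alpha \in {\mathcal F}$ with $a + f(y_\alpha) \in C$ for all $f \in F$.

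Next I would reduce the density statement to the case $G = \Z^n$. After possibly replacing $(y_\alpha)$ by a sub-IP-set and restricting attention to a single coset of a suitable subgroup, one may assume $(y_\alpha)$ takes values in the finitely generated subgroup of $H$ generated by the singletons $y_n$, so that the ambient subgroup of $H$ needed for evaluating $F$ at $(y_\alpha)$ is finitely generated. Likewise, the images $f(y_\alpha)$ with $f \in F$ lie in a finitely generated subgroup $G_0 \le G$, which is isomorphic to $\Z^n \oplus T$ for some finite torsion group $T$. Projecting onto the $\Z^n$-summand (and treating the $T$-component as a finite partition refinement) reduces the problem to a statement in $\Z^n$ to which Theorem \ref{thm_mipsz} applies directly, yielding $x \in \Z^n$ and $\alpha \in {\mathcal F}$ with $x + f(y_\alpha)$ in the projected image of $C$ for every $f \in F$; lifting $x$ then furnishes the required $a \in C$.

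The main obstacle will be the second step, specifically the interplay between torsion and polynomial maps: a polynomial $f \in \PP(H,G)$ of positive degree may degenerate on torsion subgroups, and positive Banach upper density in $G$ does not always correspond cleanly to positive density in a $\Z^n$-quotient. One way around this is to absorb the torsion into an additional finite coloring (the $T$-component refinement mentioned above), reducing to a finite partition of $\Z^n$-cosets and applying Theorem \ref{thm_mipsz} coset-by-coset; an alternative, which is the route taken in \cite{Bergelson_Leibman99}, is to develop a direct density polynomial recurrence result for arbitrary countable amenable groups by ergodic methods, thereby bypassing the structural reduction entirely. Either route, combined with the central-set reduction from the first step, delivers Theorem \ref{thm_ippolvdwtrue}.
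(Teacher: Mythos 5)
First, note that the paper does not actually prove Theorem \ref{thm_ippolvdwtrue}: it is quoted from \cite[Corollary 8.8]{Bergelson_Leibman99}, where it is obtained from the polynomial Hales--Jewett theorem by a purely partition-type argument. Your proposal is therefore necessarily a different route, and it has a genuine gap in its second step: the reduction of the density statement from a general countable abelian group $G$ to $\Z^n$ does not work. Two separate problems arise. (i) The subgroup $G_0\leq G$ generated by the relevant images $f(y_\alpha)$ ($f\in F$, $\alpha\in{\mathcal F}$) need not be finitely generated --- take $G=H=\Q$ and $f(x)=x^2$ --- so there is no $\Z^n\oplus T$ structure to project onto. (ii) Even when $G_0$ is finitely generated, positive upper Banach density of $C$ in $G$ does not localize to the cosets of $G_0$, which is where the sought configuration $\{a\}\cup\{a+f(y_\alpha):f\in F\}$ must live: for instance, in $G=\bigoplus_{n}\Z$ the subgroup $\{x:x_1=x_2\}$ has upper Banach density $1$ yet meets every coset of $G_0=\Z e_1$ in a single point. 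Your proposed fix, ``treating the $T$-component as a finite partition refinement,'' only absorbs the finite torsion summand; the coset decomposition of $G$ modulo $G_0$ is infinite and cannot be absorbed into a finite recoloring. The authors themselves flag exactly this obstruction in the footnote to Theorem \ref{thm_statements_Dset}: a version of Theorem \ref{thm_mipsz} beyond $\Z^n$ is ``currently unavailable,'' which is why they state their $D$-set (density) result only for $\Z^n$ while the partition result is asserted for all countable abelian groups.

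Your fallback is also misattributed: \cite{Bergelson_Leibman99} does not establish a density polynomial recurrence theorem for amenable groups by ergodic methods; Corollary 8.8 there is a partition statement deduced from the polynomial Hales--Jewett theorem, and it is precisely because that argument never passes through density that it applies to arbitrary countable abelian groups. Your first step is fine (and is overkill: any cell of positive upper Banach density would do, with no need for minimal idempotents), but to repair the proof you would have to abandon the density detour and argue via (polynomial) Hales--Jewett --- compare how the paper itself uses Theorem \ref{thm_halesjewett} in the proof of Proposition \ref{prop_IPvdWgroups} to obtain an IP van der Waerden statement for homomorphisms over arbitrary commutative semigroups without any density input.
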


\begin{theorem}[Equivalent finitistic form of Theorem \ref{thm_ippolvdwtrue}]\label{cor_finitisticippolvdw}
Let $r\in\N$, let $G,H$ be countable abelian groups, let $F\subset\PP(H,G)$ be a finite subset and let $(y_\alpha)_{\alpha\in{\mathcal F}}$ be an IP set in $H$.
There exists a finite set $I\subset G$ such that for every $r$-coloring\footnote{Here and throughout the paper, for $r\in\N$, we denote by $[r]$ the set $\{1,\dots,r\}$.} $\chi:I\to[r]$ there exists $i\in\{1,\dots,r\}$, $a\in I$ with $\chi(a)=i$ and $\alpha\in{\mathcal F}$ such that $\chi\big(a+f(y_\alpha)\big)=i$ for every $f\in F$.
\end{theorem}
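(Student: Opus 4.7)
The plan is to deduce Theorem \ref{cor_finitisticippolvdw} from Theorem \ref{thm_ippolvdwtrue} by a standard compactness/diagonalisation argument. The converse implication is immediate, since any $r$-coloring $\chi:G\to[r]$ restricts to an $r$-coloring of the finite set $I$ supplied by the finitistic form, so I focus on the substantive direction.

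For that direction, I would argue by contradiction. Since $G$ is countable, enumerate $G=\{g_1,g_2,\dots\}$ and set $I_n:=\{g_1,\dots,g_n\}$. Assuming no finite $I\subset G$ has the claimed property, for each $n\in\N$ there exists a \emph{bad} coloring $\chi_n:I_n\to[r]$ such that no $a\in I_n$ and $\alpha\in\mathcal{F}$ yield a $\chi_n$-monochromatic configuration $J(a,\alpha):=\{a\}\cup\{a+f(y_\alpha):f\in F\}$ entirely contained in $I_n$. Extend each $\chi_n$ arbitrarily to a total function $\tilde\chi_n:G\to[r]$. By Tychonoff's theorem the product space $[r]^G$ is compact in the product topology, so some subsequence $(\tilde\chi_{n_k})_{k\in\N}$ converges pointwise to a limit coloring $\chi:G\to[r]$.

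Now apply Theorem \ref{thm_ippolvdwtrue} to the finite partition of $G$ induced by $\chi$: this yields $i\in[r]$, $a\in G$ and $\alpha\in\mathcal{F}$ such that the finite set $J(a,\alpha)$ is $i$-monochromatic under $\chi$. Because $J(a,\alpha)$ is fixed and finite, pointwise convergence ensures that $\tilde\chi_{n_k}$ agrees with $\chi$ on $J(a,\alpha)$ for all sufficiently large $k$, and exhausting $G$ by the $I_n$ ensures that $J(a,\alpha)\subseteq I_{n_k}$ for all sufficiently large $k$. For any such $k$ the restriction $\chi_{n_k}=\tilde\chi_{n_k}|_{I_{n_k}}$ then makes $J(a,\alpha)$ $i$-monochromatic inside $I_{n_k}$, contradicting the badness of $\chi_{n_k}$.

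The argument is entirely routine; the only step worth highlighting is the compactness passage from the sequence of finite-domain colorings $\chi_n$ to a global coloring $\chi:G\to[r]$ to which the infinitistic Theorem \ref{thm_ippolvdwtrue} applies. I do not anticipate any substantive obstacle, as transfers of this type between infinitary and finitistic formulations of Ramsey-theoretic statements are classical.
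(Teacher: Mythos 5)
Your proposal is correct and follows essentially the same route as the paper: the paper's inductive construction of the nested infinite sets $S_j$ and the limit coloring $\chi(g_j)$ is exactly the hands-on version of your compactness extraction of a pointwise-convergent subsequence in $[r]^G$ (which, since $G$ is countable, is metrizable, so sequential compactness is available as you implicitly use). The remainder — applying Theorem \ref{thm_ippolvdwtrue} to $\chi$, locating the finite monochromatic configuration inside some $I_j$, and contradicting badness of $\chi_n$ for suitable $n$ — matches the paper's argument step for step.
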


\begin{proof}

It is easy to see that Theorem \ref{cor_finitisticippolvdw} implies Theorem \ref{thm_ippolvdwtrue}.
To prove the other direction,
  assume, for the sake of a contradiction, that Theorem \ref{cor_finitisticippolvdw} is false.
  Therefore for each finite subset $I\subset G$ there exists a ``bad'' coloring $\chi:I\to[r]$, that is, a coloring for which no monochromatic configuration of the form $\{a\}\cup\{a+f(y_\alpha):f\in F\}$ exists.
  Let $g_1,g_2,\dots$ be an enumeration of $G$ and, for each $n\in\N$, let $I_n=\{g_1,\dots,g_n\}$.
  Assume $\chi_n:I_n\to[r]$ is a ``bad'' r-coloring for $I_n$.

  Next we define a coloring $\chi:G\to[r]$.
  Let $S_0=\N$ and choose, inductively, for each $j\in\N$, some $i\in[r]$ for which the set $S_j:=\{n\in S_{j-1}:\chi_n(g_j)=i\}$ is infinite. Define $\chi(g_j)=i$.
  The coloring $\chi$ induces a partition of $G$ into $r$ sets.
  In view of Theorem \ref{thm_ippolvdwtrue}, there exists $i\in\{1,\dots,r\}$, $a\in G$ with $\chi(a)=i$ and $\alpha\in{\mathcal F}$ such that $\chi\big(a+f(y_\alpha)\big)=i$ for every $f\in F$.
  Since $F$ is finite, there exists some $j\in\N$ for which $a\in I_j$ and $a+f(y_\alpha)\in I_j$ for every $f\in F$.
  For $n\in S_j$, the coloring $\chi_n$ and $\chi$ agree on the set $\{a\}\cup\{a+f(y_\alpha):f\in F\}$.
  This contradicts the hypothesis that the coloring $\chi_n$ was ``bad'', which finishes the proof.
\end{proof}
We are now in position to prove the following statement, which will be utilized in the proof of Theorem \ref{thm_statements_central}.
\begin{corollary}\label{thm_ippolvdW}
  Let $j\in\N$, let $G$ be a countable abelian group and let $F$ be a finite family of polynomial maps from $G^j$ to $G$ such that $f({\bf0})=0$ for each $f\in F$.
  Then for every piecewise syndetic (in particular, central) set $A\subset G$ and every IP set $(y_\alpha)_{\alpha\in{\mathcal F}}$ in $G^j$ there exists $a\in A$ and $\alpha\in{\mathcal F}$ such that $a+f(y_\alpha)\in A$ for every $f\in F$.
\end{corollary}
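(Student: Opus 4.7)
The plan is to reduce the statement to the finitistic form of the IP polynomial van der Waerden theorem (Theorem \ref{cor_finitisticippolvdw}) by exploiting the structure of piecewise syndetic sets. The main idea is that a piecewise syndetic set, while not necessarily coverable by finitely many translates of itself, becomes part of a finite partition of a thick set, which is large enough to contain arbitrary finite configurations.

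More precisely, since $A$ is piecewise syndetic, there is a finite set $E=\{g_1,\dots,g_r\}\subset G$ such that the union $T=\bigcup_{g\in E}(A-g)$ is thick. Next, apply Theorem \ref{cor_finitisticippolvdw} with this value of $r$, with the given finite family $F\subset\PP(G^j,G)$, and with the given IP-set $(y_\alpha)_{\alpha\in\mathcal F}$ to obtain a finite ``unavoidable'' set $I\subset G$: any $r$-coloring of $I$ must contain a monochromatic configuration of the form $\{a\}\cup\{a+f(y_\alpha):f\in F\}$.

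By thickness of $T$, choose $t\in G$ with $t+I\subset T$. For each $x\in I$ there is at least one $g\in E$ with $t+x\in A-g$; define a coloring $\chi\colon I\to[r]$ by setting $\chi(x)=i$ for one such choice $g=g_i$. Apply Theorem \ref{cor_finitisticippolvdw} to $\chi$: there are $i\in[r]$, $a\in I$ with $\chi(a)=i$, and $\alpha\in\mathcal F$ such that $\chi(a+f(y_\alpha))=i$ for every $f\in F$. Unpacking the definition of $\chi$, this means $t+a+g_i\in A$ and $t+a+f(y_\alpha)+g_i\in A$ for every $f\in F$. Setting $a':=t+a+g_i\in A$ yields $a'+f(y_\alpha)\in A$ for every $f\in F$, completing the proof.

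There is no real obstacle in this argument once one sees that the right setup is to combine the finite-configuration character of the finitistic IP polynomial van der Waerden theorem with the fact that piecewise syndeticity gives thickness after finitely many translations. The key conceptual point — and the reason the finitistic form in Theorem \ref{cor_finitisticippolvdw} was recorded — is that one cannot apply Theorem \ref{thm_ippolvdwtrue} directly to any coloring of the whole group $G$ derived from $A$: the sets $A-g_i$ cover only a thick set, not $G$, so the coloring must be defined on a finite subset chosen after $I$ is known. The use of the finitistic form circumvents this problem cleanly.
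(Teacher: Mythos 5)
Your proof is correct and is essentially identical to the paper's own argument: both use piecewise syndeticity to get a thick union of finitely many shifts of $A$, invoke the finitistic form (Theorem \ref{cor_finitisticippolvdw}) to obtain the unavoidable finite set $I$, place a translate of $I$ inside the thick set, color $I$ by which shift lands in $A$, and translate the resulting monochromatic configuration back into $A$. No further comment is needed.
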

\begin{proof}
Since $A$ is piecewise syndetic there exists a finite set $J\subset G$ such that $T:=A-J$ is thick.
Take $r=|J|$ and apply Theorem \ref{cor_finitisticippolvdw}; let $I$ be the finite set obtained.
Since $T$ is thick, there exists some $g\in G$ such that $I+g\subset T$.
Let $\chi:I\to J$ be defined so that $x+g+\chi(x)\in A$ for all $x\in I$.
Since $|J|=r$, there exists some $j\in J$, $\tilde a\in I$ with $\chi(\tilde a)=j$ and $\alpha\in{\mathcal F}$ such that $\chi\big(\tilde a+f(y_\alpha)\big)=j$ for all $f\in F$.

Using the definition of $\chi$, we conclude that $a:=\tilde a+g+j\in A$ and, for every $f\in F$, we have $a+f(y_\alpha)=\tilde a+f(y_\alpha)+g+j\in A$.
\end{proof}

\begin{definition}[Combinatorial line]
Let $A$ be a finite alphabet, let ${*}\notin A$ and let $n\in\N$.
A \emph{variable word} in $A^n$ is an element of the set $(A\cup\{*\})^n\setminus A^n$.
Given a variable word $w$ and $a\in A$ let $w(a)\in A^n$ be the word obtained by replacing each instance of $*$ in $w$ with $a$.
The \emph{combinatorial line} generated by a variable word $w$ is the set $\{w(a):a\in A\}\subset A^n$.
\end{definition}
\begin{theorem}[Hales-Jewett \cite{Hales_Jewett63}]\label{thm_halesjewett}
For each $k,r\in\N$ there exists $HJ(k,r)\in\N$ such that for all $n\geq HJ(k,r)$ and any $r$ coloring of $[k]^n$, there exists a monochromatic combinatorial line.
\end{theorem}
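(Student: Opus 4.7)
\medskip
\noindent
\textbf{Proof plan.} The plan is to proceed by induction on the alphabet size $k$, with the induction statement universally quantified over the number of colors $r$. The base case $k=1$ is immediate: $[1]^n$ is a singleton and the all-star word is a (degenerate) monochromatic combinatorial line, so $HJ(1,r)=1$.

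For the inductive step, fix $r$ and assume that $HJ(k-1,s)$ exists for every $s\in\N$. The idea is to split the coordinates of $[k]^N$ into a sequence of blocks $B_1,\dots,B_{r+1}$ of carefully chosen sizes and to build a monochromatic combinatorial line over $[k]$ by assembling combinatorial lines over $[k-1]$ obtained block by block. Concretely, set $n_{r+1}=HJ(k-1,r)$ and, for $i$ decreasing from $r$ down to $1$, let $n_i=HJ(k-1,r^{k^{n_{i+1}+\cdots+n_{r+1}}})$, so that $n_i$ is large enough to apply $HJ(k-1,\cdot)$ to a coloring whose values encode the full restriction to the later blocks. Take $N=n_1+\cdots+n_{r+1}$ and note that $N$ will serve as $HJ(k,r)$.

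Given a coloring $\chi\colon[k]^N\to[r]$, inductively construct variable words $w_i\in([k-1]\cup\{*\})^{n_i}$ for $i=1,\dots,r+1$ as follows. At stage $i$, with $w_1,\dots,w_{i-1}$ already fixed, view the color of a point $x_i\in[k-1]^{n_i}$ as the full pattern of $\chi$-colors across all choices of coordinates in $B_{i+1},\dots,B_{r+1}$; this uses at most $r^{k^{n_{i+1}+\cdots+n_{r+1}}}$ colors. By the inductive hypothesis applied to this refined coloring of $[k-1]^{n_i}$, a combinatorial $(k-1)$-line in $B_i$ exists, yielding the variable word $w_i$. The point of the bookkeeping is that, for each $v\in[k-1]$, the $\chi$-color of the global word obtained by substituting some mixture of $k$ and $k-1$ into the $w_j$'s depends only on the pattern of substitutions and not on $v$ itself.

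The proof concludes with a pigeonhole argument. Consider the $r+1$ ``staircase'' points $p_j$ ($j=0,\dots,r$) obtained from $w_1,\dots,w_{r+1}$ by substituting $k$ into the variables of blocks $1,\dots,j$ and $k-1$ into the variables of blocks $j+1,\dots,r+1$. Two of them, say $p_a$ and $p_b$ with $a<b$, share a color. Then the combinatorial line over $[k]$ whose variable word substitutes $k$ in blocks $1,\dots,a$, the common variable $*$ in blocks $a+1,\dots,b$, and $k-1$ in blocks $b+1,\dots,r+1$ is monochromatic in color $\chi(p_a)=\chi(p_b)$. The main obstacle, and the delicate part of the argument, is the bookkeeping in the inductive construction of the $w_i$: one must propagate enough information forward at each stage so that the colors of the staircase points depend only on the index $j$ and not on the particular substitutions made inside the constructed $(k-1)$-lines.
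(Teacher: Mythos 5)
First, a remark on the comparison itself: the paper does not prove Theorem \ref{thm_halesjewett} — it is imported as a black box from \cite{Hales_Jewett63} — so there is no proof in the paper to measure yours against; I am assessing your argument on its own terms. Your overall strategy (induction on $k$, a block decomposition, monochromatic $(k-1)$-lines found block by block, and a pigeonhole over the staircase points $p_0,\dots,p_r$) is the standard colour-focusing proof, and the base case and the final pigeonhole step are fine. The genuine gap sits exactly where you flag ``the delicate part,'' and your bookkeeping as set up does not close it. At stage $i$ you colour $x\in[k-1]^{n_i}$ by the pattern of $\chi$-values over all fillings of the \emph{later} blocks $B_{i+1},\dots,B_{r+1}$; for this to be a colouring with $r^{k^{n_{i+1}+\cdots+n_{r+1}}}$ colours, the contents of the \emph{earlier} blocks $B_1,\dots,B_{i-1}$ must be frozen at one specific tuple. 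But in the final verification you must replace $w_i(k-1)$ by $w_i(v)$ inside a word whose first $i-1$ blocks read $w_1(k)\cdots w_a(k)$ followed by blocks carrying $k-1$ or $v$, where $a$ is produced only at the very end by the pigeonhole and $v$ ranges over all of $[k-1]$. No single frozen prefix covers all of these cases, so the monochromaticity of the $(k-1)$-line found at stage $i$ does not yield the colour-invariance you need, and the colours of the non-staircase points of your final line are not controlled.

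Two standard repairs. (a) Mirror the construction: choose the block lengths in increasing order, $n_1=HJ(k-1,r)$ and $n_i=HJ(k-1,r^{k^{n_1+\cdots+n_{i-1}}})$, build the words from the last block backwards, and let the refined colour of $x\in[k-1]^{n_i}$ be the entire function $(z_1,\dots,z_{i-1})\mapsto\chi\big(z_1\cdots z_{i-1}\,x\,w_{i+1}(k-1)\cdots w_{r+1}(k-1)\big)$ over \emph{all} fillings of the earlier blocks; then pass from $p_a$ to a general point of the final line by changing blocks $B_{a+1},B_{a+2},\dots,B_b$ in that order, so that at each step the suffix is in the frozen state and the prefix is arbitrary, hence covered by the universal quantifier. (b) Keep your ordering but enlarge the stage-$i$ colouring so that it also records the $\chi$-pattern for each of the $i$ candidate prefixes $w_1(k)\cdots w_a(k)w_{a+1}(k-1)\cdots w_{i-1}(k-1)$ with $0\le a<i$ (this uses $r^{i\cdot k^{n_{i+1}+\cdots+n_{r+1}}}$ colours, still finite), and perform the block changes from $B_b$ down to $B_{a+1}$. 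Either repair completes the induction; as written, the inductive step is incomplete.
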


\section{Precise formulations of main results}\label{section_statements}

Recall that, for each triple $(m,p,c)\in\N^3$, an $(m,p,c)$-set is the image of some vector ${\bf s}\in(\N\setminus\{0\})^{m+1}$ under a finite set of semigroup homomorphisms ${\bf x}\mapsto\langle {\bf x},\xi\rangle$ (see Definition \ref{def_intrompc} and explanation right after Definition \ref{def_introdeuber}).
We can generalize this concept by allowing more general classes of mappings.

\begin{definition}\label{def_deuberset}Let $G$ be a countable commutative semigroup.
\begin{enumerate}
\item A \emph{shape} in $G$ is a triple $(m,\F,c)$ where $m\in\N$, $c:G\to G$ is a homomorphism and $\F$ is an $m$-tuple $\F=(F_1,\dots,F_m)$ where each $F_j$ is a finite set of functions from $G^j$ to $G$.
\item  Given a shape $(m,\F,c)$ and ${\bf s}=(s_0,\dots,s_m)\in (G\setminus\{0\})^{m+1}$, the $(m,\F,c)$-set generated by ${\bf s}$ is the set
  $$D(m,\F,c;{\bf s}):=\left\{\begin{array}{lr}c(s_0)&\\f(s_0)+c(s_1),& f\in F_1\\f(s_0,s_1)+c(s_2),& f\in F_2\\ \vdots&\vdots\\f(s_0,\dots,s_{m-1})+c(s_m),& f\in F_m
\end{array}\right\}$$
\end{enumerate}
\end{definition}

To see how the notion of $(m,\F,c)$-sets generalizes the concept of $(m,p,c)$-sets, take a triple $(m,p,c)\in\N^3$.
Let $\tilde c:x\mapsto cx$ where $x\in\Z$ and, for each $j=1,\dots,m$, let $F_j$ be the set of all maps $f:\Z^j\to\Z$ of the form $f:{\bf x}\mapsto\langle {\bf x},{\bf\xi}\rangle$ with ${\bf\xi}\in\{-p,\dots,p\}^j$.
If we take $\F=(F_1,\dots,F_m)$, then for each ${\bf s}\in(\Z\setminus\{0\})^{m+1}$, $D(m,p,c;{\bf s})=D(m,\F,\tilde c;{\bf s})$.

We are interested in \shp s which are partition regular.
\begin{definition}
Let $G$ be a countable commutative semigroup and let $(m,\F,c)$ be a shape in $G$. We say that $(m,\F,c)$ is \emph{partition regular} if for every finite partition $G=C_1\cup\cdots\cup C_r$ there exists $i\in\{1,\dots,r\}$ and ${\bf s}\in(G\setminus\{0\})^{m+1}$ such that $D(m,\F,c;{\bf s})\subset C_i$.
\end{definition}

One could wishfully hope that any shape $(m,\F,c)$ in a countable commutative semigroup $G$ is partition regular.
This, however, is not true in general.
\begin{example}
  Take $G=\N$, partitioned into odd numbers and even numbers, and consider the shape $(1,\F,c)$ where $c$ is the identity map and $\F=(F_1)$ is comprised of the two functions $x\mapsto x$ and $x\mapsto x+1$. Then a $(1,\F,c)$-set is a triple $\{s_0,s_0+s_1,s_0+1 +s_1\}$, but neither the odd numbers nor the even numbers contain a configuration with two consecutive elements.
\end{example}

Another example, when all the maps involved are homomorphisms, is the following.
\begin{example}
  Let $G=\Z^2$, let $m=1$ and let $c:\Z^2\to\Z^2$ be the map $c(x,y)=(x,0)$.
Let $\F=(F_1)$ where $F_1\subset\E(\Z^2)$ consists of the maps $(x,y)\mapsto(0,0)$, $(x,y)\mapsto(0,x)$ and $(x,y)\mapsto(0,y)$.
Finally, consider the partition
$$\Z^2=\{(0,0)\}\cup\{(x,0):x\neq0\}\cup\{(0,y):y\neq0\}\cup\{(x,y):x,y\neq0\}.$$
It is not hard to see that there is no $(m,\F,c)$-set in a single cell of this partition.
\end{example}

In this paper we establish sufficient conditions for a shape $(m,\F,c)$ to be partition regular.
This will allow us to obtain the strong generalizations of Deuber's theorem alluded to in the introduction.

For two (countable commutative) semigroups $G,H$ we denote by $\HH(H,G)$ the set of all semigroup homomorphisms from $H$ to $G$.
We also use $\E(G)$ to denote $\HH(G,G)$ (elements of $\E(G)$ are often referred to as endomorphisms).
Finally, recall that $\PP(H,G)$ denotes the set of polynomial maps $f:H\to G$ with $f(0)=0$ (see Definition \ref{def_polmaps}).

Here is the formulation of one of the main results of this paper; its proof is given at the end of Section \ref{section_mpcincentral}.

\begin{theorem}\label{thm_statements_central}
Let $G$ be a countable commutative semigroup, let $A\subset G$ be a central set and let $(m,\F,c)$ be a shape in $G$. Assume that at least one of the following holds:
\begin{enumerate}
\item The map $c$ is the identity map and, for each $j=1,\dots, m$, we have $F_j\subset\HH(G^j,G)$.
\item $G$ is a group, the image of $c$ has finite index in $G$ and, for each $j=1,\dots, m$, $F_j\subset\PP(G^j,G)$.
\end{enumerate}
Then $A$ contains an $(m,\F,c)$-set.
\end{theorem}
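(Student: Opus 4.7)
My approach is induction on $m$. For the base case $m=0$ we must find $s_0\in G\setminus\{0\}$ with $c(s_0)\in A$: in case (1), $c$ is the identity and $A$ contains an IP-set (hence is infinite), so any nonzero $s_0\in A$ works; in case (2), the image $c(G)$ has finite index in $G$, is therefore syndetic, and meets the piecewise syndetic set $A$ in an infinite set, from which one extracts a nonzero $s_0$ with $c(s_0)\in A$.

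For the inductive step, fix a minimal idempotent $p\in\beta G$ with $A\in p$ and put $\hat A:=\{x\in G:A-x\in p\}$, which is also in $p$. The core of the argument is to choose $s_0\in G\setminus\{0\}$ so that $c(s_0)\in A$ and, for every $j\in\{1,\dots,m\}$ and every $f\in F_j$, the element $f(s_0,0,\dots,0)$ belongs to $\hat A$. Granted such $s_0$, the set
\[
A' \;:=\; A\;\cap\;\bigcap_{j=1}^{m}\;\bigcap_{f\in F_j}\bigl(A-f(s_0,0,\dots,0)\bigr)
\]
lies in $p$ and is therefore central. Writing each $f\in F_{j+1}$ as $f(s_0,x_1,\dots,x_j)=f(s_0,0,\dots,0)+g_f(x_1,\dots,x_j)$ with $g_f(0,\dots,0)=0$ (in case (1), $g_f=f(0,\cdot)\in\HH(G^j,G)$ by the semigroup homomorphism identity $f((s_0,0,\dots,0)+(0,x_1,\dots,x_j))=f(s_0,0,\dots,0)+f(0,x_1,\dots,x_j)$; in case (2), $g_f\in\PP(G^j,G)$ after subtracting the constant term in the group $G$), and setting $\tilde F_j=\{g_f:f\in F_{j+1}\}$, one obtains a reduced shape $(m-1,\tilde\F,c)$ of the same type as $(m,\F,c)$. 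The inductive hypothesis applied to the central set $A'$ and this reduced shape yields $(s_1,\dots,s_m)\in(G\setminus\{0\})^m$ with $D(m-1,\tilde\F,c;(s_1,\dots,s_m))\subset A'$; a short computation using the definition of $A'$ and the shift identity then shows $D(m,\F,c;(s_0,s_1,\dots,s_m))\subset A$.

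The substantive step is the selection of $s_0$. Define $\phi_{j,f}:G\to G$ by $\phi_{j,f}(s):=f(s,0,\dots,0)$ for $f\in F_j$; each $\phi_{j,f}$ satisfies $\phi_{j,f}(0)=0$ and is a homomorphism in case (1), a polynomial map in case (2). One must find $s_0\in G\setminus\{0\}$ with $c(s_0)\in A$ and $\phi_{j,f}(s_0)\in\hat A$ for every $j,f$. In case (1), this follows from the classical central sets theorem (Theorem \ref{thm_cst}) applied to $\hat A$ together with the IP-set $(\phi_{j,f}(u_\alpha))$ obtained from an IP-set $(u_\alpha)$ inside $A\cap\hat A$, combined with a standard argument controlling several homomorphisms simultaneously (one may alternatively invoke Corollary \ref{thm_ippolvdW} on the central set $A\cap\hat A$). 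In case (2) the same program is carried out using the polynomial extension of the central sets theorem (Theorem \ref{thm_GCST}, to be established later in the paper) in place of Theorem \ref{thm_cst}, the polynomial version being precisely what is needed to control the polynomial images $\phi_{j,f}(s_0)$ simultaneously.

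The main obstacle is case (2). Freezing the first coordinate of $f\in\PP(G^j,G)$ at $s_0$ destroys the property $f(0)=0$, forcing the decomposition into constant and origin-vanishing parts and the absorption of the constants as shifts of $A$ inside the auxiliary central set $A'$; ensuring via the choice of $s_0$ that these constants $\phi_{j,f}(s_0)$ all land in $\hat A$ requires a Ramsey-theoretic result strong enough to simultaneously control polynomial preimages of a central set, which is precisely the role of the polynomial central sets theorem. Case (1) is comparatively easy because the $\phi_{j,f}$ are themselves homomorphisms and the classical central sets theorem applies directly.
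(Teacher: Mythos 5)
Your overall architecture (induction on $m$, peeling off $s_0$ and reducing to an $(m-1)$-shape inside the central set $A'$) is internally consistent, but the step on which everything rests --- the selection of $s_0$ with $c(s_0)\in A$ and $f(s_0,0,\dots,0)\in\hat A$ for \emph{every} $f\in\bigcup_j F_j$ --- is not delivered by the tools you invoke, and I do not see how to repair it. Theorem \ref{thm_cst}, Corollary \ref{thm_ippolvdW} and Theorem \ref{thm_GCST} all produce configurations of the form $a+f(y_\alpha)\in A$ with an \emph{uncontrolled} base point $a$ (resp.\ $x_\beta$); none of them lets you force $a=0$, which is what the condition $\phi_{j,f}(s_0)\in\hat A$ amounts to. Nor can such a shift-free statement hold in general: for $G=\Z$, $c=\mathrm{id}$, and a shape in which some $f$ restricts to $s\mapsto 2s$ on the first coordinate, your step asks for $s_0$ in one central set with $2s_0$ in another prescribed central set; but $\{x,2x\}$ fails Rado's columns condition, so there are finite colorings of $\Z$ with no monochromatic pair $\{x,2x\}$, hence central sets $C$ with $C\cap\{x:2x\in C\}=\emptyset$. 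You would therefore have to exploit properties of the particular set $\hat A=\{x:A-x\in p\}$ beyond its being a member of $p$, and none are identified. (A smaller problem: in your base case for (2), ``a syndetic set meets every piecewise syndetic set'' is false --- the odd and even integers are a counterexample --- though the conclusion $A\cap c(G)\neq\emptyset$ can be rescued by pushing $p$ forward to the finite group $G/c(G)$ and noting its image is the identity, so $c(G)\in p$.)

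The structural reason your induction cannot absorb the shift is that you adjoin the new generator at the \emph{front}: $s_0$ then enters every subsequent line through $f(s_0,\dots)$, leaving no free additive slot. The paper's proof (via Theorem \ref{thm_generalmain}) runs the induction in the opposite direction, adjoining the \emph{last} generator $s_m$ at each step; the new coordinate occurs only in the new line and only as $c(s_m)$, which is exactly where the shift $x$ produced by the generalized central sets theorem (Theorem \ref{thm_GCST}) is deposited, using IP-regularity of $c$ to solve $c(s_m)=x$ within an IP-set of admissible shifts. To make that composition work one must carry an entire IP-set of generator tuples through the induction, since the central sets theorem requires an IP-set as input at the next stage. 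I would recommend reorganizing your induction accordingly; your reduction of case (2) to origin-vanishing polynomials via the decomposition $f(s_0,{\bf x})=f(s_0,{\bf 0})+g_f({\bf x})$ is essentially the ``licit'' property the paper isolates, so that part of your analysis carries over.
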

A special case of this theorem was obtained by Furstenberg, who showed that any central set in $\N$ contains a $(m,p,c)$-set for any triple $(m,p,c)\in\N^3$ \cite{Furstenberg81}.

An extension of Furstenberg's result was establish in \cite{Beiglbock_Bergelson_Downarowicz_Fish09}, where it was shown that any $D$-set in $\N$ contains a $(m,p,c)$-set for any triple $(m,p,c)\in\N^3$.
The following theorem strengthens the second part of Theorem \ref{thm_statements_central} in the case\footnote{We believe that Theorem \ref{thm_statements_Dset} is actually valid for general countable commutative groups, but to prove it one would need an appropriate generalization of Theorem \ref{thm_mipsz}, which is currently unavailable.} $G=\Z^n$ ; its proof is presented at the end of Section \ref{section_mpcincentral}.

\begin{theorem}\label{thm_statements_Dset}
Let $A\subset\Z^n$ be a $D$-set and let $(m,\F,c)$ be a \shp{} in $\Z^n$, where the image of $c$ has finite index in $\Z^n$, and for each $j=1,\dots,m$ we have $F_j\subset\PP(\Z^{nj},\Z^n)$. Then $A$ contains an $(m,\F,c)$-set.
\end{theorem}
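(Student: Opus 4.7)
The plan is to mimic the proof of part (2) of Theorem \ref{thm_statements_central}, substituting the Multidimensional IP Polynomial Szemer\'edi Theorem (Theorem \ref{thm_mipsz}) for Corollary \ref{thm_ippolvdW}. The point is that the only property of a central (or piecewise syndetic) set actually used in proving Theorem \ref{thm_statements_central}(2) is an IP-polynomial recurrence of the form supplied by Corollary \ref{thm_ippolvdW}, and the analogous recurrence holds for $D$-sets because every $D$-set has positive Banach upper density.

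The first step is to establish the $D$-set analogue of Corollary \ref{thm_ippolvdW}: for any $D$-set $B\subset\Z^n$, any IP-set $(y_\alpha)_{\alpha\in{\mathcal F}}$ in $\Z^{nj}$, and any finite family $F\subset\PP(\Z^{nj},\Z^n)$, there exist $b\in B$ and $\alpha\in{\mathcal F}$ with $b+f(y_\alpha)\in B$ for every $f\in F$. This follows immediately from Theorem \ref{thm_mipsz} applied to the enlarged family $F\cup\{0\}$: the zero polynomial belongs to $\PP(\Z^{nj},\Z^n)$, and Theorem \ref{thm_mipsz} produces some $b\in\Z^n$ with $b=b+0\in B$ and $b+f(y_\alpha)\in B$ for every $f\in F$.

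With this recurrence in hand, the proof proceeds by induction on $m$, in parallel with Theorem \ref{thm_statements_central}(2). The base case $m=0$ reduces to finding $s_0\neq 0$ with $c(s_0)\in A$: the finite-index hypothesis on the image of $c$ forces $c$ to be injective and makes $A\cap c(\Z^n)$ a set of positive Banach upper density, so such $s_0$ is abundant. The inductive step uses a strengthened inductive hypothesis carrying, in parallel with the partial $(k,(F_1,\dots,F_k),c)$-set inside $A$, a finite family of auxiliary polynomial expressions in $(s_0,\dots,s_k)$ that are guaranteed to land in $A$; this extra data is exactly what allows the $D$-set recurrence above to produce a next coordinate $s_{k+1}\in\Z^n\setminus\{0\}$ with $f(s_0,\dots,s_k)+c(s_{k+1})\in A$ for every $f\in F_{k+1}$. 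The main obstacle is the bookkeeping for this strengthened hypothesis---one must verify that the collection of auxiliary polynomial expressions transported from one stage to the next remains a finite sub-family of $\PP$ to which Theorem \ref{thm_mipsz} applies. This is precisely the bookkeeping used in the central case, so no essentially new ideas beyond Theorem \ref{thm_mipsz} are required; in particular, no $D$-set version of the polynomial central sets theorem (Theorem \ref{thm_GCST}) is needed.
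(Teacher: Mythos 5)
There is a genuine gap. Your first step (deriving the recurrence with $b\in B$ from Theorem \ref{thm_mipsz} by adjoining the zero polynomial) is fine and is essentially the paper's Example \ref{example_essentialFamOfRec}. The problem is the claim that this single-recurrence statement, which uses only positive Banach upper density, can drive the induction, and in particular that ``no $D$-set version of the polynomial central sets theorem is needed.'' It cannot: the odd integers have Banach density $1/2$, hence satisfy the conclusion of Theorem \ref{thm_mipsz} for every finite $F\subset\PP(\Z^j,\Z)$ and every IP-set, yet they contain no $(1,\F,c)$-set with $c=\mathrm{id}$ and $F_1=\{0,\mathrm{id}\}$ (that would be a Schur triple $\{s_0,s_1,s_0+s_1\}$). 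So any argument that invokes only the density of $A$ --- rather than the essential \emph{idempotent} ultrafilter $p$ with $A\in p$ --- must fail. Concretely, the inductive step requires, for given $(s_0,\dots,s_k)$, some $s_{k+1}\neq 0$ with $c(s_{k+1})\in\bigcap_{f\in F_{k+1}}\bigl(A-f(s_0,\dots,s_k)\bigr)$. Theorem \ref{thm_mipsz} produces a single pair $(x,\alpha)$ in which $\alpha$ --- hence the tuple at which the polynomials are evaluated --- is chosen by the theorem, not by you, and the returned $x$ need not lie in the finite-index image of $c$. No finite list of auxiliary membership statements about $A$ repairs this; one must carry an entire IP-set of generator tuples from stage to stage and obtain an IP-set of return points $(x_\beta)$ coherent with a sub-IP-set of the generators, so that IP-regularity of $c$ can then locate $c(s_{k+1})$ among the $x_\beta$. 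That IP-strengthened recurrence is exactly Theorem \ref{thm_GCST}, and its proof uses idempotency of $p$ in an essential way (via the set $\{a\in A:A-a\in p\}$ and the iterated intersections).

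The paper's actual proof proceeds by exhibiting $\PP(\Z^{nj},\Z^n)$ as a licit R-family with respect to the essential idempotent $p$ witnessing that $A$ is a $D$-set (this is where Theorem \ref{thm_mipsz} enters, and it is its only role), and then feeding this into the abstract Theorems \ref{thm_GCST} and \ref{thm_generalmain}. A smaller but symptomatic error of the same kind occurs in your base case: $A\cap c(\Z^n)$ having positive density does not follow from positive density of $A$ plus finite index of $c(\Z^n)$ (again, odds versus $2\Z$); it follows because every finite-index subgroup belongs to every idempotent ultrafilter, i.e., once more from idempotency rather than density.
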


An immediate corollary of Theorem \ref{thm_statements_central} is that certain rather general types of \shp s are partition regular:
\begin{corollary}\label{thm_statements_main}
Let $G$ be a countable commutative semigroup and let $(m,\F,c)$ be a shape in $G$. Assume that at least one of the following holds:
\begin{enumerate}
\item The map $c$ is the identity map and, for each $j=1,\dots, m$, we have $F_j\subset\HH(G^j,G)$.
\item $G$ is a group, the image of $c$ has finite index in $G$ and, for each $j=1,\dots, m$, $F_j\subset\PP(G^j,G)$.
\end{enumerate}
Then $(m,\F,c)$ is partition regular.
\end{corollary}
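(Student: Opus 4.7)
The plan is to deduce Corollary \ref{thm_statements_main} as an immediate consequence of Theorem \ref{thm_statements_central}. The only additional ingredient required is the well-known fact that in any finite partition $G=C_1\cup\cdots\cup C_r$ of a countable commutative semigroup, at least one of the cells must be a central set.

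To establish this auxiliary fact, I would invoke the preliminaries: every countable commutative semigroup $G$ admits a minimal idempotent ultrafilter $p\in\beta G$ (which exists by Ellis's theorem applied to a minimal right ideal of the compact right-topological semigroup $\beta G$). By Remark \ref{rmrk_ultrafilter}, exactly one cell $C_i$ of the partition belongs to $p$. Since $p$ is simultaneously minimal and idempotent and $C_i\in p$, the cell $C_i$ is central by definition.

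Now, under either hypothesis (1) or hypothesis (2) of the corollary, Theorem \ref{thm_statements_central} applies to the central set $C_i$ and produces some ${\bf s}\in(G\setminus\{0\})^{m+1}$ with $D(m,\F,c;{\bf s})\subset C_i$. This is exactly the condition demanded by partition regularity of the shape $(m,\F,c)$, so the corollary follows.

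The main obstacle is not located in this derivation at all but rather in the proof of Theorem \ref{thm_statements_central}, which in case (1) will presumably be handled by an application of the central sets theorem (Theorem \ref{thm_cst}) combined with an inductive construction of the generators $s_0,\dots,s_m$, and in case (2) will require the considerably deeper polynomial extension of the central sets theorem together with Corollary \ref{thm_ippolvdW} to absorb the nonlinear terms. For Corollary \ref{thm_statements_main} itself, once Theorem \ref{thm_statements_central} is in hand, there is essentially no further work to do.
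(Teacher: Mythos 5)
Your derivation is correct and matches the paper exactly: the paper presents this as an immediate corollary of Theorem \ref{thm_statements_central}, relying on the fact (stated in the preliminaries) that for any finite partition of a countable commutative semigroup, one of the cells is central. Nothing further is needed.
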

We now show how this corollary implies Corollary \ref{cor_intro_multidimbrauer} from the introduction.
\begin{proof}[Proof of Corollary \ref{cor_intro_multidimbrauer}]
  We will use part (1) of Corollary \ref{thm_statements_main}.
  Let $G=\N^d$, $m=1$ and
  $$F_1=\Big\{{\bf x}\mapsto\big(i_1f({\bf x}),\dots,i_df({\bf x})\big):0\leq i_1,\dots,i_d\leq k\Big\}\subset\HH(\N^d,\N^d),$$
  where ${\bf x}$ denotes an element of $\N^d$.
  For any finite coloring of $\N^d$ there is some ${\bf s}=(s_0,s_1)\in\N^2$ such that $D(m,\F,c;{\bf s})$ is monochromatic.
  Putting $b=s_0$ and $a=s_1$ we obtain a monochromatic configuration

  \hfill$\displaystyle\{b\}\cup\Big\{a+\big(i_1f(b),\cdots,i_df(b)\big):0\leq i_1,\dots,i_d\leq k\Big\}.$\hfill\qedhere
\end{proof}

The main tool employed by Furstenberg in his proof of the special case of Theorem \ref{thm_statements_central} mentioned above was his central sets theorem (cf. Theorem \ref{thm_cst}).
A similar strategy was adopted in \cite{Beiglbock_Bergelson_Downarowicz_Fish09} to establish the result for $D$-sets.
Our proof of Theorem \ref{thm_statements_central} is based on the following polynomial version of the central sets theorem, which we believe is of independent interest.

\begin{theorem}[Multidimensional polynomial central sets theorem]\label{thm_PCSTAG}
Let $G$ be a countable abelian group, let $j\in\N$ and let $(y_\alpha)_{\alpha\in{\mathcal F}}$ be an IP-set in $G^j$.
Let $F\subset\PP(G^j,G)$ and let $A\subset G$ be a central set or, if $G=\Z^n$, let $A$ be a $D$-set.
Then there exist an IP-set $(x_\beta)_{\beta\in{\mathcal F}}$ in $G$ and a sub-IP-set $(z_\beta)_{\beta\in{\mathcal F}}$ of $(y_\alpha)_{\alpha\in{\mathcal F}}$ such that
$$\forall f\in F\quad\forall \beta\in{\mathcal F}\qquad x_\beta+f(z_\beta)\in A$$
\end{theorem}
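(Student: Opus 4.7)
The proof follows the familiar ultrafilter blueprint for the classical central sets theorem (Theorem \ref{thm_cst}), with the role of the projections $\pi_1,\dots,\pi_j$ taken over by the finite polynomial family $F$, and with the final combinatorial input upgraded to a polynomial van der Waerden / Szemer\'edi statement. Fix an idempotent $p\in\beta G$ with $A\in p$: in the central case choose $p$ minimal, and in the $D$-set case choose $p$ essential. Set $A^*:=\{a\in A:A-a\in p\}$, so that $A^*\in p$ and $A^*-a\in p$ for every $a\in A^*$. I will construct $x_1,x_2,\dots\in G\setminus\{0\}$ and $\alpha_1<\alpha_2<\cdots$ in $\mathcal{F}$ by induction on $n$, with sub-IP-set $z_\beta:=y_{\bigcup_{n\in\beta}\alpha_n}$ and IP-set $x_\beta:=\sum_{n\in\beta}x_n$, maintaining the stronger inductive hypothesis that $x_\beta+f(z_\beta)\in A^*$ for every nonempty $\beta\subset\{1,\dots,n\}$ and every $f\in F$.

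Suppose $x_1,\dots,x_{n-1}$ and $\alpha_1<\cdots<\alpha_{n-1}$ are already in place. For each $\beta'\subset\{1,\dots,n-1\}$ (with the convention $x_\emptyset=z_\emptyset=0$) and each $f\in F$, define the polynomial
\[k_{\beta',f}(y):=f(z_{\beta'}+y)-f(z_{\beta'}),\]
which lies in $\PP(G^j,G)$, and set $a_{\beta',f}:=x_{\beta'}+f(z_{\beta'})$. For any $\alpha_n>\alpha_{n-1}$ and $\beta:=\beta'\cup\{n\}$, a direct expansion gives
\[x_\beta+f(z_\beta)=a_{\beta',f}+x_n+k_{\beta',f}(y_{\alpha_n}).\]
Each $a_{\beta',f}$ is either $0$ (when $\beta'=\emptyset$) or belongs to $A^*$ by the inductive hypothesis, so in every case $A^*-a_{\beta',f}\in p$. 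Hence the finite intersection
\[B:=\bigcap_{\beta'\subset\{1,\dots,n-1\}}\bigcap_{f\in F}(A^*-a_{\beta',f})\]
also lies in $p$, and the task at stage $n$ reduces to producing $x_n\in G\setminus\{0\}$ and $\alpha_n>\alpha_{n-1}$ with $x_n+k_{\beta',f}(y_{\alpha_n})\in B$ for every admissible pair $(\beta',f)$.

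The main step is delivering such a pair. In the central case, $B\in p$ forces $B$ to be piecewise syndetic, so Corollary \ref{thm_ippolvdW}, applied to $B$, the tail IP-set $(y_\alpha)_{\alpha>\alpha_{n-1}}$, and the finite polynomial family $\{k_{\beta',f}\}\subset\PP(G^j,G)$, supplies the required $(x_n,\alpha_n)$; the extra condition $x_n\neq 0$ is arranged by first removing the point $0$ from $B$, which stays in $p$ as $p$ is non-principal. In the $D$-set case $G=\Z^n$, $B$ instead has positive Banach upper density, and Theorem \ref{thm_mipsz} plays exactly the same role. Unwinding the displayed identity then yields $x_\beta+f(z_\beta)\in A^*$ for every nonempty $\beta\subset\{1,\dots,n\}$ containing $n$, preserving the inductive hypothesis and completing the induction. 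The genuinely polynomial content of the argument is concentrated in this one invocation of the IP polynomial van der Waerden / Szemer\'edi theorem against a member of $p$: this is the key step and the place I expect any alternative proof to have to do real work. Everything else is the familiar $A^*$-bookkeeping from Furstenberg's proof of Theorem \ref{thm_cst}, transcribed verbatim.
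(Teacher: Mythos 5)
Your argument is correct and is essentially the paper's own proof: the paper factors it through the abstract Theorem \ref{thm_GCST} (the induction with $A^*=\{a\in A:A-a\in p\}$ for an idempotent $p$, carried out for a licit R-family $\Gamma$) and then specializes to $\Gamma=\PP(G^j,G)$, whereas you have inlined that abstraction. Your shift polynomials $k_{\beta',f}$ are precisely the paper's licitness maps $\phi_{\beta'}^{f}$ from Example \ref{example_polylicit}, and your invocation of Corollary \ref{thm_ippolvdW} (resp.\ Theorem \ref{thm_mipsz}) is precisely the paper's verification in Example \ref{example_minimalFamOfRec} (resp.\ Example \ref{example_essentialFamOfRec}) that $\PP(G^j,G)$ is an R-family with respect to minimal (resp.\ essential) idempotents.
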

When $G=\Z$ and the polynomial maps in $F$ are homomorphisms, this reduces to the classical central sets theorem. Theorem \ref{thm_PCSTAG} will be derived as a corollary of the more general Theorem \ref{thm_GCST} below.

 As we mentioned in the introduction, one of the main motivations for Deuber to introduce $(m,p,c)$-sets was to prove a conjecture of Rado stating that for a finite partition of rich sets, one of the cells is still rich.

We obtain an analogous result for certain $(m,\F,c)$-sets.
Before we state the main result in this direction (Theorem \ref{thm_lambdacrich} below) we need a few definitions.

\begin{definition}Let $G$ be a countable commutative semigroup.
A \emph{\shd} in $G$ is an infinite (not necessarily countable) set of \shp s.
Given a \shd{} $\Lambda$ in $G$, we say that a set $A\subset G$ is \emph{$\Lambda$-rich} if for every shape $(m,\F,c)\in\Lambda$ there exists an $(m,\F,c)$-set contained in $A$.
\end{definition}

For example, let $\Lambda$ be the \shd{} in $\N$ consisting of the \shp s that arise from all possible triples $(m,p,c)\in\N^3$. Then a set $A\subset\N$ is $\Lambda$-rich if and only if it is rich in the sense defined in the introduction. Here are more examples.
\begin{example}
\begin{enumerate}
\leavevmode
\item\label{enumerate_example1}
Let $k\in\N$, let $c:\N\to\N$ be the identity map, let $F_{1,k}=\{x\mapsto ix:i=0,\dots,k-1\}\subset\E(\N)$ and make $\F_k=(F_{1,k})$.
Then any $(1,\F_k,c)$-set contains a ``Brauer configuration''  of length $k$ (i.e. an arithmetic progression of length $k$ together with its common difference, cf. Theorem \ref{thm_introbrauer}).
\item Let $c$ and $\F_k$ be as in part (\ref{enumerate_example1}) above.
Consider the \shd{} $\Lambda=\{(1,\F_k,c):k\in\N\}$.
A set $A\subset\N$ is $\Lambda$-rich if and only it contains Brauer configurations of arbitrary length.

\item\label{enumerate_example3}
Let again $m\in\N$ and let $c:\N\to\N$ be the identity map.
For each $j=1,\dots,m$, let $F_{j,m}$ be the set of all maps $f:\N^j\to\N$ of the form $f:{\bf x}\mapsto\langle{\bf x},\xi\rangle$ where $\xi\in\{0,1\}^j$.
Let $\F_m=(F_{1,m},\dots,F_{m,m})$.
Then any $(m,\F_m,c)$-set is a set of the form $FS(A)$ for some set $A\subset\N$ with cardinality $m+1$.
\item Let $c$ and $\F_m$ be as in part (\ref{enumerate_example3}) of this example.
Define the \shp{} $\Lambda=\{(m,\F_m,c):m\in\N\}$.
A set $A\subset\N$ is $\Lambda$-rich if and only if it is an $IP_0$ set, i.e. a set containing $FS(A)$ for some arbitrarily large finite sets $A$.
\end{enumerate}
\end{example}

\begin{definition}
Let $G$ be a countable commutative semigroup.
Let $m\in\N$ and, for each $i=1,\dots,m$, let $F_i\subset\HH(G^i,G)$ be finite. Also, let $\F=(F_1,\cdots,F_m)$ and let $c\in\E(G)$. We say that $c$ is \emph{concordant} with $\F$ if there exists a non-zero homomorphism $b\in\E(G)$ and, for each $i\in[m]$ and $f\in F_i$, there is a homomorphism $a_f\in\HH(G^i,G)$ such that $c\circ a_f=f\circ \mathbf{b}$, where $\mathbf{b}:G^i\to G^i$ is the homomorphism $\mathbf{b}(g_1,\dots,g_i)=\big(b(g_1),\dots,b(g_i)\big)$.\end{definition}

Observe that the identity homomorphism $c:x\mapsto x$ is concordant with any $\F$.
More generally, if $c$ is in the center of the semigroup $\E(G)$, then $c$ is concordant with any $\F$ (by taking $b=c$ and $a_f=f$).

When $c$ is an automorphism, it is concordant with any $\F$.
Indeed, one can take $b$ to be the identity map and $a_f=c^{-1}\circ f$.
In the following example, $c$ is neither in the center of $\E(G)$ nor is it an automorphism.
\begin{example}
  Let $G=\Z^2$, let $m=1$, let $c\in\E(\Z^2)$ be the projection onto the first coordinate and let $\F=(F_1)$ where $F_1$ consists of finitely many endomorphisms of $\Z^2$ whose image is contained in $c(\Z^2)$.
  Then $c$ is concordant with $\F$.

  Indeed, take $f\in F_1$.
  We let $b\in\E(\Z^2)$ be the identity map and $a_f=f$.
  Since the restriction of $c$ to its image is the identity map, we have $c\circ a_f=f\circ b$.
\end{example}

One can reinterpret each part of Corollary \ref{thm_statements_main} as providing an example of a \shd{} $\Lambda$ such that, for any finite partition of $G$, one of the cells is $\Lambda$-rich.
Our next theorem provides a natural example of a \shd{} $\Lambda$ with the stronger property that, for any finite partition of a $\Lambda$-rich set, one of the cells is still $\Lambda$-rich.
Theorem \ref{thm_lambdacrich} deals with an arbitrary countable commutative semigroup $G$; when $G=\N$ we recover Deuber's result (Theorem \ref{thm_introdeuber}, part (2)).

\begin{theorem}\label{thm_lambdacrich}
  Let $G$ be a countable commutative semigroup and let $\Lambda_t$ be the \shd{} consisting of all \shp s $(m,\F,c)$ with $m\in\N$, $c$ in the center of $\E(G)$ and $\F=(F_1,\dots,F_m)$ where each $F_j\subset\HH(G^j,G)$. In other words
  $$\Lambda_t=\left\{(m,\F,c):\begin{array}{l}m\in\N,~c\text{ is in the center of }\E(G),\\ \F=(F_1,\dots,F_m),~F_j\subset\HH(G^j,G)~\forall j\end{array}\right\}$$
  For any finite partition of a $\Lambda_t$-rich set, one of the cells is still $\Lambda_t$-rich.
\end{theorem}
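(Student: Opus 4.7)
I argue by contradiction, reducing the multi-shape richness problem to a single-shape Ramsey statement, then invoking the finitistic Deuber-type result for $\Lambda_t$ (Theorem \ref{thm_finitistic}). Suppose $A \subset G$ is $\Lambda_t$-rich, $A = C_1 \cup \cdots \cup C_r$, but no $C_i$ is $\Lambda_t$-rich; for each $i$, choose $\sigma_i = (m_i, \F^{(i)}, c_i) \in \Lambda_t$ witnessing this, i.e.\ such that $C_i$ contains no $(m_i, \F^{(i)}, c_i)$-set. The heart of the proof is to build a single shape $(m^*, \F^*, c^*) \in \Lambda_t$ with the \emph{domination property} that every $(m^*, \F^*, c^*)$-set contains a $\sigma_i$-set for each $i$.

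To build this master shape I use that the center of the monoid $(\E(G),\circ)$ is closed under composition. Set $c^* := c_1 \circ c_2 \circ \cdots \circ c_r$, which is central, and for each $i$ let $\tilde c_i$ denote the composition of the $c_k$'s with $k \neq i$, so that $c^* = c_i \circ \tilde c_i$. Take $m^* := \max_i m_i$, and for each $j \in \{1,\dots,m^*\}$ define
$$F^*_j := \bigcup_{i\,:\,m_i \geq j} \bigl\{ f \circ (\tilde c_i,\dots,\tilde c_i) : f \in F^{(i)}_j \bigr\},$$
a finite family of elements of $\HH(G^j, G)$. With $\F^* := (F^*_1,\dots,F^*_{m^*})$, the triple $(m^*, \F^*, c^*)$ lies in $\Lambda_t$. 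Unfolding the definitions shows that for any ${\bf s}^* = (s_0^*,\dots,s_{m^*}^*) \in (G\setminus\{0\})^{m^*+1}$, the set $D(m^*,\F^*,c^*;{\bf s}^*)$ contains, for every $i$, the entire configuration $D(m_i, \F^{(i)}, c_i; (\tilde c_i(s_0^*),\dots,\tilde c_i(s_{m_i}^*)))$: the leading term matches since $c^*(s_0^*) = c_i(\tilde c_i(s_0^*))$, and for every $j \in [m_i]$ and $f \in F^{(i)}_j$ one has $f\circ(\tilde c_i,\dots,\tilde c_i)(s_0^*,\dots,s_{j-1}^*) + c^*(s_j^*) = f(\tilde c_i(s_0^*),\dots,\tilde c_i(s_{j-1}^*)) + c_i(\tilde c_i(s_j^*))$.

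Once $(m^*,\F^*,c^*)$ is in hand, I apply Theorem \ref{thm_finitistic} to it with $r$ colors to obtain a shape $\Sigma \in \Lambda_t$ such that every $r$-coloring of every $\Sigma$-set contains a monochromatic $(m^*,\F^*,c^*)$-set. Since $A$ is $\Lambda_t$-rich, $A$ contains some $\Sigma$-set; the partition $A=\bigcup C_i$ then produces a monochromatic $(m^*,\F^*,c^*)$-set inside some cell $C_{i^*}$, which by the domination property contains $D(m_{i^*}, \F^{(i^*)}, c_{i^*}; (\tilde c_{i^*}(s_0^*),\dots,\tilde c_{i^*}(s_{m_{i^*}}^*)))$, contradicting the choice of $\sigma_{i^*}$. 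The main obstacle I expect is a nondegeneracy issue: the contained configuration is a bona fide $\sigma_{i^*}$-set only when all of the generators $\tilde c_{i^*}(s_k^*)$ are non-zero. In cancellative settings (e.g.\ $G=\N$, which recovers Deuber's original theorem) this is automatic, but for a general commutative semigroup one must work harder, either by enlarging the master shape with auxiliary homomorphisms that force the generators to avoid the finitely many kernels $\ker(\tilde c_i)$, or by strengthening the application of Theorem \ref{thm_finitistic} so as to select the generators of the monochromatic $(m^*,\F^*,c^*)$-set outside these kernels.
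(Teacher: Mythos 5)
Your proposal is correct and follows essentially the same route as the paper: the paper isolates your ``master shape'' construction as Lemma \ref{lemma_dsyscontained} (stated for two shapes, with exactly your recipe $c=c_1\circ c_2$ and $f\circ\mathbf{c}_{3-i}$, and iterated $r-1$ times to cover all $r$ witnesses), and then applies Theorem \ref{thm_finitistic} to the resulting shape to reach the same contradiction. The nondegeneracy concern you raise --- that the generators $\tilde c_i(s_k^*)$ of the contained configuration could vanish when the $c_k$ have nontrivial kernels --- is a genuine subtlety, but it is equally present in, and left unaddressed by, the paper's own proof of Lemma \ref{lemma_dsyscontained}, so it does not distinguish your argument from theirs.
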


If we take $G=\N$ then $\E(\N)$ is isomorphic to the multiplicative semigroup $(\N,\times)$ and hence is commutative; this means that any \shp{} $(m,\F,c)$ arising from a triple $(m,p,c)\in\N^3$ as explained after Definition \ref{def_deuberset}, is in $\Lambda_t$.
Therefore Theorem \ref{thm_lambdacrich} implies Deuber's theorem (Theorem \ref{thm_introdeuber}).
Theorem \ref{thm_lambdacrich} will be derived in Section \ref{section_finitistic} from its finitistic version, which we now state.
\begin{theorem}\label{thm_finitistic}
  Let $G$ be a countable commutative semigroup and let $\Lambda_c$ be the \shd{} of all \shp s $(m,\F,c)$ where $m\in\N$, $F_i\subset\HH(G^i,G)$ for all $i=1,\dots,m$ and $c$ is concordant with $\F$.

  For any $r\in\N$ and any \shp{} $(m,\F,c)\in\Lambda_c$ there exists another \shp{} $(M,\vH,C)\in\Lambda_c$ such that
  any partition of an $(M,\vH,C)$-set into $r$-cells, one of the cells contains an $(m,\F,c)$-set.

  Moreover, if $c$ is the identity, we can take $C$ to be the identity as well, and if $c$ is in the center of $\E(G)$ we can take $C$ to be in the center of $\E(G)$.
\end{theorem}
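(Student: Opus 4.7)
\emph{Proof Plan}: The plan is to prove Theorem \ref{thm_finitistic} by induction on $m$, combining the inductive hypothesis with the Hales-Jewett theorem (Theorem \ref{thm_halesjewett}).

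For the base case ($m=0$), since $D(0,\emptyset,c;\mathbf{s}) = \{c(s_0)\}$ is a singleton, I would take $(M,\vH,C) := (0,\emptyset,c)$, which is trivially in $\Lambda_c$; the ``moreover'' clauses are preserved since $C=c$.

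For the inductive step, assume the theorem for all shapes of height $m'<m$. Let $(m,\F,c)\in\Lambda_c$ with $\F=(F_1,\ldots,F_m)$ and concordance witnessed by $b\in\E(G)$ together with homomorphisms $\{a_f\}$ satisfying $c\circ a_f = f\circ\mathbf{b}$. Given $r\in\N$, I would construct $(M,\vH,C)$ with $C:=c$ (so the ``moreover'' clauses are automatic) as follows. First, apply the inductive hypothesis to the truncated shape $(m-1,\F^\flat,c)$ with $\F^\flat:=(F_1,\ldots,F_{m-1})$, which is itself in $\Lambda_c$ (the restricted concordance data still works), obtaining a shape $(M_0,\vH^{(0)},c)\in\Lambda_c$. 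Then extend to $(M,\vH,c)$ by appending a ``Hales-Jewett layer'' to handle $F_m$: set $K$ equal to a suitable Hales-Jewett number depending on $|F_m|$ and $r$, take $M:=M_0+K$, and define $\vH=(H_1,\ldots,H_M)$ where $H_j\supseteq H_j^{(0)}$ for $j\leq M_0$ (embedding the lower sub-structure) and, for $M_0<j\leq M$, $H_j$ contains homomorphisms of the form $f\circ\pi$ with $f\in F_m$ and $\pi$ running over various sum-of-projection homomorphisms $G^j\to G^m$. Given any $r$-coloring of $D(M,\vH,C;\mathbf{S})$, the induction would produce a monochromatic $(m-1,\F^\flat,c)$-set inside the lower sub-structure, generated by some $(s_0,\ldots,s_{m-1})$; Hales-Jewett on the upper $K$ coordinates would then yield $s_m$ such that every element $f(s_0,\ldots,s_{m-1})+c(s_m)$, $f\in F_m$, receives the same color as the lower set.

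The main obstacle is preserving the concordance of $C$ with $\vH$, i.e.\;keeping $(M,\vH,C)\in\Lambda_c$. Using the witness $B:=b$ inherited from the original concordance, for each $h\in H_j$ built as $h=f\circ\pi$ (with $f\in F_i$ and $\pi\colon G^j\to G^i$ a sum-of-projection homomorphism), I would define $\tilde a_h:=a_f\circ\pi$. Then
\[ C\circ\tilde a_h = c\circ a_f\circ\pi = f\circ\mathbf{b}\circ\pi = f\circ\pi\circ\mathbf{b} = h\circ\mathbf{B}, \]
where the identity $\mathbf{b}\circ\pi=\pi\circ\mathbf{b}$ uses that $b$ is a homomorphism and $\pi$ is assembled from coordinate projections and sums. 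A more delicate secondary point is aligning the combinatorial line extracted via Hales-Jewett in the upper layer with the inductively-obtained lower monochromatic configuration so that the colors match; I expect this to force iterating the inductive hypothesis several times in color-focusing fashion, producing sufficiently many candidate lower configurations (sharing a common ``focus'' color) before invoking HJ on the upper layer to locate the matching $s_m$.
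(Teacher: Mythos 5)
There is a genuine gap, and it sits exactly where you flag it: the color-matching between the top line and the inductively obtained lower configuration. Your plan fixes the generators bottom-up — first find a monochromatic $(m-1,\F^\flat,c)$-set with generators $(s_0,\dots,s_{m-1})$ and color $i$, then invoke Hales--Jewett on the upper $K$ coordinates to produce $s_m$ with $\{f(s_0,\dots,s_{m-1})+c(s_m):f\in F_m\}$ of color $i$. But Hales--Jewett cannot be used this way. To make the $m$-th line monochromatic one colors words $(f_0,\dots,f_{K-1})\in F_m^K$ by the color of $\sum_i f_i(T_i)+C(t)$, where the $T_i$ are blocks of generators of the \emph{ambient} $(M,\vH,C)$-set; the monochromatic combinatorial line then forces $s_0,\dots,s_{m-1}$ to be block-sums over the variable positions and $s_m$ to absorb the fixed positions. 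That is, the HJ step necessarily re-selects all of $s_0,\dots,s_{m-1}$, destroying the monochromatic $(m-1)$-configuration you found first. If instead you keep $(s_0,\dots,s_{m-1})$ fixed, what you need is a translate $c(s_m)$ placing the prescribed finite pattern $\{f(s_0,\dots,s_{m-1})\}_{f\in F_m}$ inside the prescribed color class $i$, and no finite pigeonhole or HJ argument supplies that (the class $i$ may simply avoid every such translate). Your closing appeal to ``iterating the inductive hypothesis in color-focusing fashion'' does not resolve this, because each candidate lower configuration changes the pattern whose translate is required, so having many candidates of a common focus color does not decouple the two steps. (The concordance bookkeeping in your last display, by contrast, is essentially fine and matches the paper's ``routine to verify'' step, modulo the paper's choice $C=c\circ b$ rather than $C=c$.)

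The paper's proof inverts the order of quantifiers and this is the idea you are missing. Its induction (Lemma \ref{lemma_induction}) is not on $m$ but on the number of already-monochromatic lines counted from the \emph{last} line: given an ambient shape whose last $k$ lines are each monochromatic, a single HJ application yields a sub-$(m,\F,c)$-set whose last $k+1$ lines are each monochromatic. The newly handled line $m-k$ is precisely the HJ combinatorial line (so the substitution is free to redefine $u_0,\dots,u_{m-k}$), while the later lines stay monochromatic because $\vH$ is built so that, for every possible HJ outcome, line $j>m-k$ of the sub-configuration is contained in a single (already monochromatic) line of the ambient set. Crucially, different lines are allowed to carry different colors throughout the whole process; colors are matched only at the very end by amplification plus pigeonhole: one first blows $m$ up to $n=m(r-1)$ and designs $\vH^{(0)}$ so that any $m+1$ of the $n+1$ lines of an $(n,\vH^{(0)},c)$-set contain an $(m,\F,c)$-set, runs the lemma $n$ times to make every line monochromatic, and then pigeonholes to find $m+1$ lines of the same color. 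That amplification-over-lines, rather than over candidate lower configurations, is the mechanism your sketch needs and does not provide.
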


The proof of Theorem \ref{thm_finitistic} occupies most of Section \ref{section_finitistic}.

The following definition was introduced by Deuber and Hindman in \cite{Deuber_Hindman87}.
For finitely many finite sets $A_1,\dots,A_n$ we define the sum
$$\sum_{i=1}^nA_i:=\{a_1+\cdots+a_n: a_1\in A_1,\dots,a_n\in A_n\}$$

\begin{definition}[$(m,p,c)$-system]
  A set $A\subset\N$ is an \emph{$(m,p,c)$-system} if for each $(m,p,c)\in\N^3$ there exists ${\bf s}={\bf s}(m,p,c)\in\N^{m+1}$ such that
  \begin{enumerate}
    \item $D\big(m_1,p_1,c_1;{\bf s}(m_1,p_1,c_1)\big)\cap D\big(m_2,p_2,c_2;{\bf s}(m_2,p_2,c_2)\big)=\emptyset$ whenever

        $(m_1,p_1,c_1)\neq(m_2,p_2,c_2)$.
    \item For all nonempty finite sets $\alpha\subset\N^3$, we have
    $$\sum_{(m,p,c)\in\alpha}D\big(m,p,c;{\bf s}(m,p,c)\big)\subset A$$
  \end{enumerate}
\end{definition}

In \cite{Deuber_Hindman87} it was proved that for any finite partition of $\N$ one of the cells is an $(m,p,c)$-system, and in \cite{Hindman_Lefmann93}  it was show that for any finite partition of an $(m,p,c)$-system, one of the cells is an $(m,p,c)$-system.
We have extensions of both results for certain countable \shd s in countable commutative semigroups.
\begin{definition}[$\Lambda$-system]Let $G$ be a countable commutative semigroup and let $\Lambda$ be a countable \shd{} in $G$.
A set $A\subset G$ is a $\Lambda$\emph{-system} if for every \shp{} $(m,\F,c)\in\Lambda$ there exists ${\bf s}={\bf s}(m,\F,c)\in G^{m+1}$ such that
  \begin{enumerate}
    \item $D\big(m_1,\F_1,c_1;{\bf s}\big)\cap D\big(m_2,\F_2,c_2;{\bf s}\big)=\emptyset$ whenever $(m_1,\F_1,c_1)\neq(m_2,\F_2,c_2)$.
    \item For all $\alpha\in{\mathcal F}(\Lambda)=\{\alpha\subset\Lambda:0<|\alpha|<\infty\}$ we have
    $$\sum_{(m,\F,c)\in\alpha}D\big(m,\F,c;{\bf s}(m,\F,c)\big)\subset A$$
  \end{enumerate}
\end{definition}

We have two results regarding $\Lambda$-systems.
The first, Theorem \ref{thm_DLambdamonochromatic}, extends the result of \cite{Deuber_Hindman87} and utilizes, in its proof, Corollary \ref{thm_statements_main}.
The second, Theorem \ref{thm_systemspartitionregularity}, extends the result of \cite{Hindman_Lefmann93} and utilizes, in its proof, Theorem \ref{thm_lambdacrich}.
\begin{theorem}\label{thm_DLambdamonochromatic}

Let $G$ and $\Lambda$ satisfy at least one of the following two conditions.
\begin{enumerate}
\item $G$ is a countable commutative semigroup and $\Lambda$ is a countable \shd{} composed by \shp s $(m,\F,c)$ satisfying the first condition of Corollary \ref{thm_statements_main}.

\item $G$ is a countable commutative group and let $\Lambda$ is a countable \shd{} composed by \shp s $(m,\F,c)$ satisfying the second condition of Corollary \ref{thm_statements_main}.
\end{enumerate}
Then for any finite partition of $G$, one of the cells is a $\Lambda$-system.
\end{theorem}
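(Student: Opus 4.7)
The plan is to combine the central-set machinery supplied by Theorem \ref{thm_statements_central} with the star-set structure of minimal idempotent ultrafilters. Given a finite partition $G=C_1\cup\cdots\cup C_r$, pick a minimal idempotent ultrafilter $p\in\beta G$ (whose existence is recalled in Section \ref{sec_preliminares}) and let $A=C_i\in p$ be the unique cell selected by $p$; then $A$ is central. Set $A^\star:=\{a\in A:-a+A\in p\}$; the standard theory of idempotent ultrafilters gives $A^\star\in p$ and, for every $a\in A^\star$, $-a+A^\star\in p$. This last property is what will allow us to control all partial sums simultaneously.

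Since $\Lambda$ is countable, enumerate $\Lambda=\{\lambda_k\}_{k\in\N}$ with $\lambda_k=(m_k,\F_k,c_k)$ and build ${\bf s}(\lambda_k)\in(G\setminus\{0\})^{m_k+1}$ by induction on $k$. Writing $D_k:=D(\lambda_k;{\bf s}(\lambda_k))$, I will maintain the invariants: (i) $D_k\cap D_j=\emptyset$ for every $j<k$, and (ii) every non-empty partial sum $d_{j_1}+\cdots+d_{j_\ell}$ with $j_1<\cdots<j_\ell\leq k$ and $d_{j_i}\in D_{j_i}$ lies in $A^\star$.

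For the inductive step let $\Sigma_{k-1}$ denote the finite set of non-empty partial sums formed from $D_1,\dots,D_{k-1}$; invariant (ii) gives $\Sigma_{k-1}\subset A^\star$, so each $-\sigma+A^\star\in p$. Put
\[
B_{k-1}:=\Big(A^\star\cap\bigcap_{\sigma\in\Sigma_{k-1}}(-\sigma+A^\star)\Big)\setminus\bigcup_{j<k}D_j,
\]
which is a finite intersection of elements of $p$ with a finite set removed. Since $p$ is non-principal (no finite set lies in a non-principal ultrafilter), $B_{k-1}\in p$, and in particular $B_{k-1}$ is central. Applying Theorem \ref{thm_statements_central} to $B_{k-1}$ and the shape $\lambda_k$ (whose hypothesis holds because $\Lambda$ satisfies condition (1) or (2) of Corollary \ref{thm_statements_main}) produces ${\bf s}(\lambda_k)$ with $D_k\subset B_{k-1}$. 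Invariant (i) is immediate; for (ii), any new partial sum involving $D_k$ has the form $\sigma+d_k$ with $\sigma\in\Sigma_{k-1}\cup\{0\}$ and $d_k\in D_k\subset B_{k-1}$, and the definition of $B_{k-1}$ forces $\sigma+d_k\in A^\star$.

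Taking any $\alpha\in{\mathcal F}(\Lambda)$, invariant (ii) yields $\sum_{\lambda\in\alpha}D(\lambda;{\bf s}(\lambda))\subset A^\star\subset C_i$, and (i) supplies the required pairwise disjointness, so $C_i$ is a $\Lambda$-system. The main obstacle is ensuring that the central-set hypothesis of Theorem \ref{thm_statements_central} is preserved throughout the induction; this is precisely what the $A^\star$ trick achieves, and it constitutes the core technical step of the argument.
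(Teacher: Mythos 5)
Your proposal is correct and follows essentially the same route as the paper: enumerate the countable \shd, use the idempotent star-set $B=\{n\in A:A-n\in p\}$ to keep all partial sums inside the chosen cell, and at each stage apply the central-set containment result to the intersection of shifted star-sets with the finite set of previous sums removed. The only cosmetic difference is that you invoke Theorem \ref{thm_statements_central} where the paper applies Theorem \ref{thm_generalmain} directly (legitimate, since $B_{k-1}\in p$ with $p$ minimal idempotent makes $B_{k-1}$ central).
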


Theorem \ref{thm_DLambdamonochromatic} is proved in Section \ref{sec_mfcsystems}.

Theorem \ref{thm_systemspartitionregularity} below establishes partition regularity of $\Lambda_t$-systems, where the \shd{} $\Lambda_t$ is defined in Theorem \ref{thm_lambdacrich}.
Since $\Lambda_t$-systems do not exist when $\Lambda_t$ is uncountable, we will assume in Theorem \ref{thm_systemspartitionregularity} that the \shd{} $\Lambda_t$ is countable.
Observe that $\Lambda_t$ is countable if and only if $\HH(G^j,G)$ is countable for every $j\in\N$.
Before formulating Theorem \ref{thm_systemspartitionregularity} we provide some relevant examples.

\begin{example}
  If $G$ is a finitely generated commutative semigroup, then the \shd{} $\Lambda_t$ is countable.
  Indeed, for every $j\in\N$ the semigroup $G^j$ is finitely generated, hence a homomorphism $f\in\HH(G^j,G)$ is determined by finitely many values.
  This fact implies that $\HH(G^j,G)$ is countable, and in particular, the center of $\E(G)$ is also countable.
  It follows that $\Lambda_t$ is countable.
\end{example}
\begin{example}
  If $G$ is the additive group of an algebraic number field, then $\Lambda_t$ is countable.
  Indeed, $G$ is isomorphic to $\Q^n$ for some $n\in\N$, and hence, for each $j\in\N$, a homomorphism $f\in\HH(G^j,G)=\HH(\Q^{nj},\Q^n)$ is determined by finitely many points, namely the $nj$ points of the form $(0,\dots,0,1,0,\dots,0)\in\Q^{nj}$.
  This fact implies that $\HH(G^j,G)$ is countable, and in particular the center of $\E(G)$ is also countable.
  It follows that $\Lambda_t$ is countable.
\end{example}

\begin{theorem}\label{thm_systemspartitionregularity}
Let $G$ be a countable commutative semigroup and let $\Lambda_t$ be the \shd{} defined in Theorem \ref{thm_lambdacrich}.
Assume that $\Lambda_t$ is countable.
Then for any finite partition of a $\Lambda_t$-system, one of the cells in the partition is still a $\Lambda_t$-system.
\end{theorem}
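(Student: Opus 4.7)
My plan adapts the strategy of Hindman and Lefmann \cite{Hindman_Lefmann93} for $(m,p,c)$-systems, substituting our Theorem \ref{thm_finitistic} for Deuber's finitistic theorem. Let $A$ be a $\Lambda_t$-system with witnessing vectors $\tilde{\bf s}(\lambda)$, and let $\chi:A\to[r]$ be a finite coloring with cells $C_1,\ldots,C_r$. Since $\Lambda_t$ is countable, enumerate it as $\{\lambda_n\}_{n\in\N}$ with $\lambda_n=(m_n,\F_n,c_n)$. The goal is to produce a single cell $C_{i^*}$ together with new witnessing vectors $\hat{\bf s}(\lambda_n)\in G^{m_n+1}$ showing that $C_{i^*}$ is itself a $\Lambda_t$-system.

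The construction proceeds by induction on $n$. At stage $n$, suppose we have $\hat{\bf s}(\lambda_1),\ldots,\hat{\bf s}(\lambda_{n-1})$ and a color $i^*\in[r]$ such that every sum $T_\beta:=\sum_{k\in\beta}D(\lambda_k;\hat{\bf s}(\lambda_k))$ with $\emptyset\ne\beta\subset[n-1]$ is contained in $C_{i^*}$, and the constituent $D$-sets are pairwise disjoint. List the finitely many points $0=t_0,t_1,\ldots,t_M$ of $\{0\}\cup\bigcup_\beta T_\beta$ and define a refined coloring $\tilde\chi(x):=(\chi(x+t_\ell))_{\ell=0}^M$, taking at most $r^{M+1}$ values. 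By Theorem \ref{thm_finitistic} applied to $\lambda_n$ with $r^{M+1}$ colors, there exists a shape $\mu_n\in\Lambda_t$ such that every $r^{M+1}$-coloring of every $\mu_n$-set contains a monochromatic $\lambda_n$-subset. Since $A$ is a $\Lambda_t$-system, $D(\mu_n;\tilde{\bf s}(\mu_n))\subset A$, and restricting $\tilde\chi$ to this $D$-set yields a $\tilde\chi$-monochromatic $\lambda_n$-subset with vector $\hat{\bf s}(\lambda_n)$ and color tuple $(j_0,j_1,\ldots,j_M)\in[r]^{M+1}$, meaning $t_\ell+D(\lambda_n;\hat{\bf s}(\lambda_n))\subset C_{j_\ell}$ for each $\ell$.

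The main obstacle is then to ensure $j_\ell=i^*$ for every $\ell$, so that each translate lies in the fixed cell $C_{i^*}$ and the sum conditions extend to $\{\lambda_1,\ldots,\lambda_n\}$. A naive application of Theorem \ref{thm_finitistic} produces some tuple $(j_0,\ldots,j_M)$ which need not lie on the diagonal. To overcome this, one proves by outer induction on $n$ a strengthened finitistic statement: for each $n,r\in\N$ there exists a ``composite'' shape $\mu_n^*\in\Lambda_t$ such that every $r$-coloring of every $\mu_n^*$-set contains a single cell in which witnessing vectors $\hat{\bf s}(\lambda_k)$ for $\{\lambda_1,\ldots,\lambda_n\}$ satisfying the full $\Lambda_t$-system conditions can be extracted. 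The base case $n=1$ is Theorem \ref{thm_finitistic}; the inductive step uses nested applications of Theorem \ref{thm_finitistic} together with the refined-coloring device above to absorb the shift constraints, while the ``moreover'' clause of Theorem \ref{thm_finitistic} guarantees that the resulting $\mu_n^*$ remains in $\Lambda_t$. Disjointness is arranged by a standard finite-avoidance argument using the infiniteness of $G$. Once this composite-shape statement is established, applying it to $\tilde{\bf s}(\mu_n^*)\in A$ for each $n$ and pigeonholing over the $r$ possible cells yields a consistent cell $C_{i^*}$ and the desired witnessing vectors for all of $\Lambda_t$, completing the proof.
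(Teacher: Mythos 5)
Your plan runs into a genuine gap at exactly the point you flag as ``the main obstacle,'' and the proposed repair does not close it. A Ramsey-type statement such as Theorem \ref{thm_finitistic} (or your refined-coloring variant of it) always delivers a configuration that is monochromatic in \emph{some} cell; it can never force the configuration into a \emph{prescribed} cell $C_{i^*}$ chosen at an earlier stage. So in the inductive-extension version of your argument the constant tuple $(j_0,\dots,j_M)$ need not equal $(i^*,\dots,i^*)$, and the induction cannot proceed. Your fallback --- a ``composite shape'' $\mu_n^*$ whose sets contain, in a single cell, a full finite system for $\{\lambda_1,\dots,\lambda_n\}$ --- is a substantial unproved claim (realizing sumsets $D(\lambda_1;{\bf s}_1)+D(\lambda_2;{\bf s}_2)$ inside a single $(M,\vH,C)$-set already requires care with the $c$'s, in the spirit of Lemma \ref{lemma_dsyscontained}), and even granting it, the final step fails: pigeonholing over the $r$ cells fixes a cell that works for infinitely many $n$, but the witnessing vectors produced at different stages $n$ are unrelated, and the definition of a $\Lambda_t$-system requires one coherent family ${\bf s}(\lambda)$ for all $\lambda\in\Lambda_t$ simultaneously. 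Since $G$ is infinite and discrete there is no compactness with which to extract such a limit family. This is the same obstruction that defeats the naive inductive ``proof'' of Hindman's theorem, of which the present statement is a strengthening.

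The paper's proof avoids all of this by working in $\beta G$: Lemma \ref{lemma_dsystemsinidempotents} shows that $A$ is a $\Lambda_t$-system if and only if $A$ belongs to some \emph{idempotent} ultrafilter $p$ lying in the set $U$ of ultrafilters all of whose members are $\Lambda_t$-rich; the existence of such idempotents comes from exhibiting a nonempty compact subsemigroup of $U$ (built from the sets $DS(\tilde\Lambda)$ for cofinite $\tilde\Lambda\subset\Lambda_t$, using Lemma \ref{lemma_cofinite} and Theorem \ref{thm_lambdacrich}) and applying Ellis's lemma. The ultrafilter is precisely the device that consistently ``prefers'' one cell: given a partition of $A$, one cell lies in $p$, and the converse direction of the lemma (where the sets $B=\{n\in A:A-n\in p\}$ and $B_n=\bigcap_{x\in T_n}(B-x)\setminus T_n$ remain in $p$ by idempotency) carries out your intended induction with all translates automatically landing in the chosen cell. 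If you want to salvage your approach, you should prove the characterization of Lemma \ref{lemma_dsystemsinidempotents} rather than attempt a purely finitistic induction.
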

Theorem \ref{thm_systemspartitionregularity} is proved in Section \ref{sec_mfcsystems}.

\section{Idempotent ultrafilters and $(m,\F,c)$-sets}\label{section_mpcincentral}
Theorem \ref{thm_statements_Dset} and parts (1) and (2) of Theorem \ref{thm_statements_central} have similar proofs.
To avoid repetition, we unify the three results into a single abstract result; this is Theorem \ref{thm_generalmain} below. Before formulating it, we need to introduce some definitions.

\begin{definition}[R-family]\label{def_famofrec}
Let $G,H$ be countable commutative semigroups and let $p\in\beta G$ be an ultrafilter. Let $\Gamma$ be a set of functions from $H\to G$.
We say that $\Gamma$ is an \emph{R-family\footnote{R stands for \textit{returns}.} with respect to $p$} if for every finite set $F\subset\Gamma$, every $A\in p$ and every IP-set $(y_\alpha)_{\alpha\in{\mathcal F}}$ in $H$, there exist $x\in G$ and $\alpha\in{\mathcal F}$ such that
$$x+f(y_\alpha)\in A\qquad\qquad\forall f\in F$$
\end{definition}

\begin{example}\label{example_essentialFamOfRec}
Let $n,j\in\N$ and take $G=\Z^n$, $H=\Z^{nj}$ and $\Gamma=\PP(\Z^{nj},\Z^n)$.
Then $\Gamma$ is an R-family with respect to any essential idempotent ultrafilter.
Indeed, let $p\in\beta\Z^n$ be an essential idempotent and let $A\in p$.
Then $A$ has positive Banach upper density.
Let $F\subset\Gamma$ be any finite set and let $(y_\alpha)_{\alpha\in{\mathcal F}}$ be an IP-set in $\Z^{nj}$.
Theorem \ref{thm_mipsz} implies that there exist $x\in\Z^n$ and $\alpha\in{\mathcal F}$ such that $x+f(y_\alpha)\in A$ for any $f\in F$, which is precisely the condition for being an R-family.
\end{example}

\begin{example}\label{example_minimalFamOfRec}
Let $G$ be a countable abelian group, let $j\in\N$ and let $H=G^j$. Then the family $\Gamma=\PP(G^j,G)$ is an R-family with respect to any minimal idempotent ultrafilter.
Indeed, let $p\in\beta G$ be a minimal idempotent ultrafilter and let $A\in p$.
By definition, $A$ is a central set, hence a piecewise syndetic set.
Fix a finite set $F\subset\Gamma$ and an IP-set $(y_\alpha)_{\alpha\in{\mathcal F}}$ in $G^j$.
It follows from Theorem \ref{thm_ippolvdW} that there exists $a\in A$ and $\alpha\in{\mathcal F}$ such that $a+f(y_\alpha)\in A$ for all $f\in F$, and hence $\Gamma$ is an R-family.
\end{example}

Yet another family of examples is provided by the following proposition.
\begin{proposition}\label{prop_IPvdWgroups}
Let $G$ be a countable commutative semigroup, let $j\in\N$, let $H=G^j$ and let $\Gamma=\HH(G^j,G)$. Then $\Gamma$ is an R-family with respect to any minimal idempotent ultrafilter $p\in\beta G$.
\end{proposition}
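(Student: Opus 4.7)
The plan is to reduce the statement to the classical central sets theorem (Theorem \ref{thm_cst}) by packaging the finite family $F$ into a single homomorphism into a product, and then applying Theorem \ref{thm_cst} to the pushed-forward IP-set.

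More precisely, fix a minimal idempotent ultrafilter $p \in \beta G$, a set $A \in p$, a finite family $F = \{f_1, \dots, f_k\} \subset \HH(G^j, G)$, and an IP-set $(y_\alpha)_{\alpha \in \mathcal{F}}$ in $G^j$. Since $p$ is a minimal idempotent, $A$ is by definition a central set in $G$. First, I would define $\Phi \colon G^j \to G^k$ by $\Phi(y) = \bigl(f_1(y), \dots, f_k(y)\bigr)$. Since each $f_i$ is a homomorphism, so is $\Phi$; consequently the sequence $(\Phi(y_\alpha))_{\alpha \in \mathcal{F}}$ in $G^k$ satisfies $\Phi(y_{\alpha \cup \beta}) = \Phi(y_\alpha) + \Phi(y_\beta)$ for disjoint $\alpha,\beta \in \mathcal{F}$, so it is an IP-set in $G^k$.

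Next, apply Theorem \ref{thm_cst} with $j$ replaced by $k$, to the central set $A$ and the IP-set $(\Phi(y_\alpha))_{\alpha \in \mathcal{F}}$ in $G^k$. This produces an IP-set $(x_\beta)_{\beta \in \mathcal{F}}$ in $G$ and a sub-IP-set $(z_\beta)_{\beta \in \mathcal{F}}$ of $(\Phi(y_\alpha))_{\alpha \in \mathcal{F}}$ such that $x_\beta + \pi_i(z_\beta) \in A$ for every $i \in \{1,\dots,k\}$ and every $\beta \in \mathcal{F}$. By definition of sub-IP-set, there exist $\alpha_1 < \alpha_2 < \cdots$ in $\mathcal{F}$ such that $z_{\{n\}} = \Phi(y_{\alpha_n})$, whence for every $\beta \in \mathcal{F}$,
\[
z_\beta \;=\; \sum_{n\in\beta} \Phi(y_{\alpha_n}) \;=\; \Phi\Bigl(y_{\bigcup_{n\in\beta}\alpha_n}\Bigr),
\]
using that the $\alpha_n$ are pairwise disjoint (as $\alpha_1<\alpha_2<\cdots$) and that $\Phi$ is a homomorphism. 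Writing $\gamma = \bigcup_{n\in\beta}\alpha_n \in \mathcal{F}$, we get $\pi_i(z_\beta) = f_i(y_\gamma)$ for each $i$.

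Finally, choosing any single $\beta \in \mathcal{F}$ (say $\beta = \{1\}$), set $x := x_\beta \in G$ and $\alpha := \gamma \in \mathcal{F}$; then $x + f(y_\alpha) \in A$ for every $f \in F$, as required by Definition \ref{def_famofrec}. The entire argument is essentially a bookkeeping exercise once the product packaging is in place; the only substantive ingredient is the central sets theorem, and the only point that merits attention is verifying that $(\Phi(y_\alpha))_{\alpha\in\mathcal{F}}$ is genuinely an IP-set in $G^k$ generated by the singletons $\Phi(y_{\{n\}})$, which follows directly from $\Phi$ being a homomorphism.
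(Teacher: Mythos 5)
Your argument is correct in substance but takes a genuinely different route from the paper. You reduce the claim to the classical central sets theorem (Theorem \ref{thm_cst}) by bundling $F=\{f_1,\dots,f_k\}$ into a single homomorphism $\Phi\colon G^j\to G^k$, pushing the IP-set forward, and reading off the conclusion from the coordinate projections; the paper instead proves the proposition directly from the Hales--Jewett theorem (Theorem \ref{thm_halesjewett}), using only that a member of a minimal idempotent is piecewise syndetic: it places all sums $g+f_1(y_1)+\cdots+f_n(y_n)$ inside a thick set $A-B$, colours $F^n$ by which $b\in B$ corrects the sum into $A$, and extracts $x$ and $\alpha$ from a monochromatic combinatorial line. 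Your approach buys brevity and in fact yields a stronger conclusion (a whole IP-set of good pairs rather than a single $x$ and $\alpha$); what it costs is self-containedness: Theorem \ref{thm_cst} for general commutative semigroups is a deep external result, and since Theorem \ref{thm_GCST} together with Proposition \ref{prop_IPvdWgroups} (applied to the projections $\pi_i\in\HH(G^j,G)$) re-derives Theorem \ref{thm_cst}, your route makes the paper's generalization rest on the very theorem it generalizes --- not formally circular, since Theorem \ref{thm_cst} is quoted from the literature, but it forfeits the independent, elementary proof that the Hales--Jewett argument supplies. One small technicality worth a remark: the paper's definition of an IP-set requires an injective generating sequence, and $(\Phi(y_{\{n\}}))_{n\in\N}$ need not be injective; this is harmless for the central sets theorem as stated in the sources (which handle arbitrary sequences), but it does not literally match Definition \ref{def_famofrec}'s ambient conventions and should be flagged.
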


\begin{proof}

We remark that when $G$ is a group, Proposition \ref{prop_IPvdWgroups} follows from Corollary \ref{thm_ippolvdW} (note that homomorphisms are polynomial maps of degree at most $1$).

Let $(y_\alpha)_{\alpha\in{\mathcal F}}$ be an IP-set in $G^j$ and let $A\in p$.
Since $A$ is, in particular, a piecewise syndetic set, there exists a finite set $B\subset G$ such that $A-B:=\{x\in G:\exists a\in A,b\in B: x+b=a\}$ is a thick set.

Let $F\subset\Lambda$ be a finite set and let $n=n(|F|,|B|)$ be the number given by Theorem \ref{thm_halesjewett}.
Since $A-B$ is thick, we can find $g\in G$ such that:
$$\forall(f_1,\dots,f_n)\in F^n \qquad\qquad g+\big(f_1(y_1)+\dots+f_n(y_n)\big)\in A-B$$
We can color $F^n$ with $|B|$ colors by associating $(f_1,\dots,f_n)\in F^n$ with an element $b\in B$ such that $g+b+f_1(y_1)+\dots+f_n(y_n)\in A$.
Apply Theorem \ref{thm_halesjewett} to find a variable word $w\in(F\cup\{*\})^n$ whose corresponding combinatorial line is monochromatic.
Let $b\in B$ be the "color" corresponding to the monochromatic combinatorial line, let $C=\big\{i\in\{1,\dots,n\}:w_i\in F\big\}$, let $\alpha=\{1,\dots,n\}\setminus C$ be the positions of the wild card $*$ in $w$ and let
$$x=g+b+\sum_{i\in C}w_i(y_i)$$
For any $f\in F$ we have
$$g+b+\sum_{i\in C}w_i(y_i)+f\left(\sum_{i\in\alpha}y_i\right)\in A$$
and this can be rewritten as $x+f(y_\alpha)\in A$ for all $f\in F$, which finishes the proof.
\end{proof}

\begin{definition}\label{def_stable}
  Let $G,H$ be countable commutative semigroups and let $\Gamma$ be a set of functions from $H$ to $G$.
  We say that $\Gamma$ is \emph{licit} if for any $f\in\Gamma$ and any $z\in H$, there exists a function $\phi_z\in\Gamma$ such that $f(y+z)=\phi_z(y)+f(z)$.
\end{definition}
\begin{example}\label{example_homolicit} Let $G,H$ be countable commutative semigroups and let $\Gamma\subset\HH(H,G)$.
It is not hard to see that $\Gamma$ is licit.
Indeed, note that for every $f\in\Gamma$ and any $z\in H$ one can take $\phi_z=f$ in the definition.
\end{example}
\begin{example}\label{example_polylicit}
If $G,H$ are countable abelian groups, the set $\Gamma=\PP(H,G)$ is licit. Indeed, for each $f\in\Gamma$ and $z\in H$ one can define $\phi_z(y):=f(y+z)-f(z)$.
        Clearly $\phi(0)=0$.
        For any $h\in H$, we have
\begin{align*}
  \phi_z(y+h)-\phi_z(y)&=f(y+z+h)-f(z)-f(y+z)+f(z)\\&=f\big((y+z)+h\big)-f(y+z)
\end{align*}
        If $f\in\PP(H,G)$ has degree $d$, then $f\big((y+z)+h\big)-f(y+z)$ is a polynomial map of degree at most $d-1$ in the variable $y$ (now both $h$ and $z$ are constants), and hence $\phi_z$ is also a polynomial map of degree at most $d$.

\end{example}

\begin{definition}
  Let $G$ be a countable commutative semigroup. An endomorphism $c\in\E(G)$ is called \emph{IP-regular} if for every IP-set $(x_\alpha)_{\alpha\in{\mathcal F}}$ in $G$ there exists an IP-set $(y_\alpha)_{\alpha\in{\mathcal F}}$ such that $\big(c(y_\alpha)\big)_{\alpha\in{\mathcal F}}$ is a sub-IP-set of $(x_\alpha)_{\alpha\in{\mathcal F}}$ (and in particular $\big(c(y_\alpha)\big)_{\alpha\in{\mathcal F}}$ is itself an IP-set).
\end{definition}

When $G=\Z$, any nontrivial endomorphism $c\in\E(\Z)$ is IP-regular.
It's not hard to see that when $G$ is an arbitrary countable abelian group, any endomorphism whose image has finite index is IP-regular.
We can now formulate our abstract theorem (which has Theorems \ref{thm_statements_central} and \ref{thm_statements_Dset} as corollaries):

\begin{theorem}\label{thm_generalmain}
  Let $G$ be a countable commutative semigroup, let $p\in\beta G$ be an idempotent ultrafilter and let $\Gamma_1,\Gamma_2,\dots$ be R-families with respect to $p$ which are licit, where $\Gamma_j$ consists of maps from $G^j$ to $G$.
  Let $c:G\to G$ be IP-regular, let $m\in\N$ and, for each $j=1,\dots,m$, let $F_j\subset\Gamma_j$ be finite.
  Finally, put $\F=(F_1,\dots,F_m)$.
  Then for any $A\in p$ there exists an IP-set $\big({\bf s}_\alpha\big)_{\alpha\in{\mathcal F}}$ in $G^{m+1}$ such that $D(m,\F,c;{\bf s}_\alpha)\subset A$ for every $\alpha\in{\mathcal F}$.
\end{theorem}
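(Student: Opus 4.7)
I would prove the theorem by induction on $m$, constructing the generators ${\bf s}^{(1)}, {\bf s}^{(2)}, \ldots \in G^{m+1}$ of the desired IP-set one at a time and setting ${\bf s}_\alpha = \sum_{n \in \alpha} {\bf s}^{(n)}$. In the base case $m = 0$ the only requirement is to produce an IP-set $(s_{0,\alpha})$ in $G$ with $c(s_{0,\alpha}) \in A$; since $p$ is idempotent, $A$ contains an IP-set (by the Galvin--Glazer theorem), and IP-regularity of $c$ pulls this IP-set back to one in $G$ whose image under $c$ sits inside $A$.

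For the inductive step I would maintain, alongside the generators, a nested sequence $A = A_0 \supseteq A_1 \supseteq \cdots$ of sets in $p$, each a starred refinement of the previous one (using that $B \in p$ implies $B^\star := \{x : B - x \in p\} \in p$). At stage $n$, having chosen ${\bf s}^{(1)}, \ldots, {\bf s}^{(n-1)}$, the task is to find ${\bf s}^{(n)} = (s_0^{(n)}, \ldots, s_m^{(n)})$ so that $D(m, \F, c; {\bf s}_\beta + {\bf s}^{(n)}) \subseteq A$ for every $\beta \subseteq \{1, \ldots, n-1\}$ (with ${\bf s}_\emptyset = {\bf 0}$). Unpacking $D$ and applying the licit property of each $\Gamma_i$ converts this into finitely many constraints of the form
\[
c(s_0^{(n)}) \in A - c(s_{0,\beta}), \qquad \phi^{(i,\beta,f)}\big(s_0^{(n)}, \ldots, s_{i-1}^{(n)}\big) + c(s_i^{(n)}) \in A - f\big({\bf s}_\beta^{<i}\big) - c(s_{i,\beta}),
\]
where $\phi^{(i,\beta,f)} \in \Gamma_i$ is the shift provided by licitness, $i$ ranges over $\{1, \ldots, m\}$, $f$ over $F_i$, and ${\bf s}_\beta^{<i} := (s_{0,\beta}, \ldots, s_{i-1,\beta})$. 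The invariant I would impose on the previously chosen generators is that each such translate of $A$ lies in $p$; this is arranged by forcing the elements of $D(m, \F, c; {\bf s}_\beta)$ produced so far to sit inside a sufficiently deep iterate $A_{n-1}$ of the starred set.

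Once every translate is in $p$, the coordinates of ${\bf s}^{(n)}$ are selected sequentially. The coordinate $s_0^{(n)}$ is obtained by applying IP-regularity of $c$ to an IP-set contained in the intersection of the first family of translates; for each $i = 1, \ldots, m$, the coordinate $s_i^{(n)}$ is obtained by applying the R-family property of $\Gamma_i$ to the IP-set in $G^i$ generated by the previously chosen tuples $(s_0^{(k)}, \ldots, s_{i-1}^{(k)})_{k}$, to the finite family $\{\phi^{(i,\beta,f)}\}$, and to the intersection (still in $p$) of the corresponding translates of $A$. After ${\bf s}^{(n)}$ is fixed, $A_n$ is defined by intersecting $A_{n-1}$ with the finitely many translates needed for the next stage, and a further application of $\star$ keeps it in $p$.

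The main obstacle is bookkeeping: the R-family property delivers only a single witness $(x, \alpha)$ per invocation, whereas we need infinitely many compatible witnesses assembling into a genuine IP-set. This is the standard Galvin--Glazer-style difficulty, handled by refining to a sub-IP-set of the already-constructed tuples before each new extraction, and by carrying enough $\star$-depth in the $A_n$'s so that the sets into which we must later reach remain in $p$. A secondary subtlety is that the R-family produces the new generator $s_i^{(n)}$ together with an index $\alpha$ into the IP-set of $(s_0^{(k)}, \ldots, s_{i-1}^{(k)})_k$; one must reconcile these $\alpha$'s across $i = 1, \ldots, m$ by passing to a common sub-IP-set, which is possible because sub-IP-sets of IP-sets remain IP-sets. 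Once this bookkeeping is arranged, the construction yields the required IP-set $({\bf s}_\alpha)_{\alpha \in {\mathcal F}}$ with $D(m, \F, c; {\bf s}_\alpha) \subseteq A$ for every $\alpha$.
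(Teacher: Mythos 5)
Your ingredients are the right ones (Galvin--Glazer induction on starred sets, licitness to absorb shifts, the R-family property to produce returns, IP-regularity to land in the image of $c$), but the architecture of your induction does not mesh with what the R-family property actually delivers, and this creates two concrete gaps. First, the R-family property hands you a single pair $(x,\alpha)$ with $x\in G$ completely arbitrary; nothing forces $x$ to lie in $c(G)$, so you cannot declare $c(s_i^{(n)})=x$ for $i\geq 1$. You do invoke IP-regularity for the coordinate $s_0^{(n)}$, where you have a whole IP-set inside the relevant intersection to pull back through $c$, but for $i\geq 1$ you have only the one element $x$ produced by the R-family, and IP-regularity cannot be applied to a single element. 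Second, within a single stage $n$ the R-family applied to $\Gamma_i$ returns an index $\alpha_i$ that forces $(s_0^{(n)},\dots,s_{i-1}^{(n)})$ to equal the $\alpha_i$-sum of earlier generators' initial segments; the indices $\alpha_1,\dots,\alpha_m$ obtained for different $i$ need not be compatible with each other (nor with the independently chosen $s_0^{(n)}$), and ``passing to a common sub-IP-set'' cannot reconcile choices that have already been pinned down inside the same stage.

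The paper's proof resolves both problems by reversing the order of the two loops. The outer induction is on the coordinate index: one first constructs an entire IP-set $({\bf s}^{(m-1)}_\alpha)_{\alpha\in{\mathcal F}}$ in $G^m$ with $D(m-1,\F,c;{\bf s}^{(m-1)}_\alpha)\subset A$, and only then appends the $m$-th coordinate by a single application of an abstract central sets theorem (Theorem \ref{thm_GCST}, proved separately by exactly the stage-by-stage Galvin--Glazer argument you describe, with $B=\{n\in A:A-n\in p\}$ and the licit shifts $\phi^f_\beta$). That theorem outputs a sub-IP-set $({\bf t}_\alpha)$ of the existing IP-set together with a whole IP-set $(x_\beta)$ of returns satisfying $x_\beta+f({\bf t}_\beta)\in A$ for all $\beta$ simultaneously; because $(x_\beta)$ is a full IP-set, IP-regularity of $c$ can then be applied to it to extract $(y_\beta)$ with $c(y_\beta)=x_{\alpha_\beta}$, and the aligned pair ${\bf s}^{(m)}_\beta=({\bf t}_{\alpha_\beta},y_\beta)$ is the new IP-set in $G^{m+1}$. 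Passing to sub-IP-sets is harmless here because the inductive property is inherited by sub-IP-sets. To repair your write-up you would need to either restructure it this way or prove inline an equivalent of Theorem \ref{thm_GCST}; as it stands, the stage-$n$ extraction step cannot be completed.
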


In order to prove Theorem \ref{thm_generalmain} we first need to establish an abstract version of the central sets theorem.

\begin{theorem}\label{thm_GCST}
  Let $G,H$ be countable commutative semigroups, let $p\in\beta G$ be an idempotent ultrafilter, let $\Gamma$ be an R-family with respect to $p$ which is licit. Then for any finite set $F\subset\Gamma$, any $A\in p$ and any IP set $(y_\alpha)_{\alpha\in{\mathcal F}}$ in $H$, there exists a sub-IP-set $(z_\beta)_{\beta\in{\mathcal F}}$ of $(y_\alpha)_{\alpha\in{\mathcal F}}$ and an IP-set $(x_\beta)_{\beta\in{\mathcal F}}$ in $G$ such that
  $$\forall f\in F\qquad\forall\beta\in{\mathcal F}\qquad x_\beta+f(y_\beta)\in A$$
\end{theorem}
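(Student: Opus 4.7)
The plan is an idempotent-ultrafilter induction in the spirit of the classical central sets theorem. For $A \in p$ set $A^\star := \{g \in A : A - g \in p\}$; the idempotency of $p$ gives both $A^\star \in p$ and, for every $g \in A^\star$, $A^\star - g \in p$ (a standard calculation from the definition $p + q = \{B : \{g : B - g \in p\} \in q\}$). I will inductively produce sequences $(x_n)_{n \in \N}$ in $G$ and $\alpha_1 < \alpha_2 < \cdots$ in ${\mathcal F}$, set $z_n := y_{\alpha_n}$, and arrange the strengthened condition $x_\beta + f(z_\beta) \in A^\star$ for every non-empty $\beta \subset \{1,\dots,n\}$ and every $f \in F$, where $x_\beta := \sum_{k \in \beta} x_k$ and $z_\beta := \sum_{k \in \beta} z_k = y_{\bigcup_{k \in \beta} \alpha_k}$. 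Since this sum decomposition respects disjoint unions, the families $(x_\beta)_{\beta \in {\mathcal F}}$ and $(z_\beta)_{\beta \in {\mathcal F}}$ will be, respectively, an IP-set in $G$ and a sub-IP-set of $(y_\alpha)_{\alpha \in {\mathcal F}}$, and $A^\star \subset A$ yields the claimed conclusion.

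For the base case $n = 1$, apply the R-family property of $\Gamma$ to $F$, $A^\star \in p$ and $(y_\alpha)$, obtaining $x_1 \in G$ and $\alpha_1 \in {\mathcal F}$ with $x_1 + f(y_{\alpha_1}) \in A^\star$ for every $f \in F$. For the inductive step, assume the construction has been carried out through index $n$, and write $a_{\beta,f} := x_\beta + f(z_\beta) \in A^\star$ for each non-empty $\beta \subset \{1,\dots,n\}$ and $f \in F$. By licitness, choose $\phi^f_{z_\beta} \in \Gamma$ with $f(z_\beta + h) = \phi^f_{z_\beta}(h) + f(z_\beta)$ for all $h \in H$. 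The requirement $x_{\beta' \cup \{n+1\}} + f(z_{\beta' \cup \{n+1\}}) \in A^\star$ for each $\beta' \subset \{1,\dots,n\}$ (including $\beta' = \emptyset$) and each $f \in F$ now unfolds into the finite list of conditions $x_{n+1} + g(z_{n+1}) \in B_{n+1}$ for $g$ ranging over
\[
F_n := F \cup \bigl\{\phi^f_{z_\beta} : \emptyset \neq \beta \subset \{1,\dots,n\},\ f \in F\bigr\} \subset \Gamma,
\]
where
\[
B_{n+1} := A^\star \cap \bigcap_{\emptyset \neq \beta \subset \{1,\dots,n\},\; f \in F} \bigl(A^\star - a_{\beta,f}\bigr) \in p
\]
(the case $\beta' = \emptyset$ corresponds to $g = f \in F$; the cases $\beta' \neq \emptyset$ correspond to $g = \phi^f_{z_{\beta'}}$, using the decomposition $x_{\beta' \cup \{n+1\}} + f(z_{\beta' \cup \{n+1\}}) = a_{\beta',f} + x_{n+1} + \phi^f_{z_{\beta'}}(z_{n+1})$). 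Replacing $(y_\alpha)$ with its tail sub-IP-set generated by those $y_{\{k\}}$ with $k > \max \alpha_n$ and applying the R-family property of $\Gamma$ to $F_n$, $B_{n+1}$ and this tail produces $x_{n+1} \in G$ and $\alpha_{n+1} \in {\mathcal F}$ with $\min \alpha_{n+1} > \max \alpha_n$, closing the induction.

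The heart of the argument is the reduction in the inductive step, in which the (up to $2^n$ many) conditions indexed by non-empty $\beta \subset \{1,\dots,n+1\}$ containing $n+1$ collapse into a single finite list of conditions on the new pair $(x_{n+1}, z_{n+1})$. The licit property is exactly what enables this collapse, by letting each $f(z_{\beta'} + z_{n+1})$ be rewritten in terms of the unknown $z_{n+1}$ alone, with the ``history'' $f(z_{\beta'})$ absorbed into the constant $a_{\beta',f}$ and hence into the target set $B_{n+1}$. The self-improving property $A^\star - g \in p$ for $g \in A^\star$ ensures that $B_{n+1}$ remains in $p$ throughout the induction. A minor technical point is that the paper's convention requires the IP-set generators $(x_n)$ to be injective; when $p$ is non-principal this can be arranged by intersecting $B_{n+1}$ with the cofinite set $G \setminus \{x_1,\dots,x_n\}$ (which lies in $p$) before each application of the R-family property.
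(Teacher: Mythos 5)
Your proposal is correct and follows essentially the same route as the paper's own proof: the same star-set $A^\star$ (the paper calls it $B$), the same induction on $n$ maintaining $x_\beta+f(z_\beta)\in A^\star$ for all non-empty $\beta\subset[n]$, the same use of licitness to fold the conditions involving $n+1$ into a single application of the R-family property on the shrunken set $B_{n+1}$. Your explicit handling of $\min\alpha_{n+1}>\max\alpha_n$ via the tail sub-IP-set and of the injectivity of the generators are minor refinements of points the paper leaves implicit.
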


\begin{proof}
Let $B=\{n\in A:A-n\in p\}$.
Because $p$ is an idempotent ultrafilter, $B\in p$. Moreover, by Lemma 4.14 in \cite{Hindman_Strauss98}, for any $n\in B$, we have $B-n\in p$.
We will construct sequences $x_1,x_2,\dots$ in $G$ and $\alpha_1<\alpha_2<\cdots$ in ${\mathcal F}$ inductively, so that for each $n$ we have
\begin{equation}\label{eq_thm_centralfull}\forall f\in F\qquad\forall\beta\subset[n],~\beta\neq\emptyset\qquad x_\beta+f(z_\beta)\in B\end{equation}
where $z_\beta=\sum_{i\in\beta}y_{\alpha_i}$.

Since $\Gamma$ is an R-family with respect to $p$, we can find $\alpha_1\in{\mathcal F}$ and $x_1\in G$ such that $x_1+f(y_{\alpha_1})\in B$ for all $f\in F$; in other words we get (\ref{eq_thm_centralfull}) for $n=1$.

Now assume we have found $x_1,\dots,x_n$ in $G$ and $\alpha_1<\dots<\alpha_n$ in ${\mathcal F}$ such that (\ref{eq_thm_centralfull}) is true.
Let
$$C=B\cap\left(\bigcap_{{\emptyset\neq\beta\subset[n]}\atop{f\in F}}B-x_\beta-f(z_\beta)\right)$$
Each of the sets of the intersection is in $p$, and because $p$ is closed under finite intersections, also $C\in p$.
We now take advantage of the fact that $\Gamma$ is licit to find, for each $f\in F$ and each nonempty $\beta\subset[n]$, a map $\phi_\beta^f\in\Gamma$ such that $f(z_\beta+y)=\phi_\beta^f(y)+f(z_\beta)$.
Let $\Phi=F\cup\{\phi_\beta^f:\emptyset\neq\beta\subset[n];f\in F\}$.
We can now use again the fact that $\Gamma$ is an R-family with respect to $p$ and find $x_{n+1}\in G$ and $\alpha_{n+1}>\alpha_n$ in ${\mathcal F}$ such that $x_{n+1}+f(z_{n+1})\in C$ for all $f\in \Phi$, where $z_{n+1}:=y_{\alpha_{n+1}}$.
We claim that (\ref{eq_thm_centralfull}) holds for $n+1$ with these choices, which will complete the induction and finish the proof.

Indeed, let $f\in F$ and let $\beta\subset[n+1]$ be non-empty. If $\beta\subset[n]$, then $x_\beta+f(z_\beta)\in B$ by the induction hypothesis.
If $\beta=\{n+1\}$, then $x_{n+1}+f(z_{n+1})\in C\subset B$ because $F\subset\Phi$.
Otherwise the set defined by $\gamma:=\beta\setminus\{n+1\}\subset[n]$ is nonempty.
Recalling that $x_\beta=x_\gamma+x_{n+1}$ and $z_\beta=z_\gamma+z_{n+1}$, we have
$$x_{n+1}+\phi_\gamma^f(z_{n+1})\in C\subset B-x_\gamma-f(z_\gamma),$$
so
$$x_\gamma+x_{n+1}+f(z_\gamma)+\phi_\gamma^f(z_{n+1})\in B$$
which is equivalent to
$$x_\beta+f(z_\beta)\in B.$$
\qedhere \end{proof}

A concrete corollary of this general result is Theorem \ref{thm_PCSTAG}, which can be interpreted as a polynomial version of the central sets theorem.
It follows from Theorem \ref{thm_GCST} by taking $G$ to be a group and letting $H=G^j$, $\Gamma=\PP(G^j,G)$, and $p$ to be a minimal idempotent (or an essential idempotent if $G=\Z$).
According to Examples \ref{example_essentialFamOfRec} and \ref{example_minimalFamOfRec}, $\Gamma$ is an R-family so Theorem \ref{thm_PCSTAG} follows.

We are now in position to prove Theorem \ref{thm_generalmain}.

\begin{proof}[Proof of Theorem \ref{thm_generalmain}]
What we need to show is that there exists some IP-set $({\bf s}_\alpha)_{\alpha\in{\mathcal F}}$ in $G^{m+1}$ such that, for all $\alpha\in{\mathcal F}$,
\begin{equation}\label{eq_Pdeuber3}\begin{array}{cclcl}
&&c(s_{\alpha,0})&\in&A\\ \forall f\in F_1&\qquad&f(s_{\alpha,0})+c(s_{\alpha,1})&\in&A\\ \forall f\in F_2&\qquad&f(s_{\alpha,0},s_{\alpha,1})+c(s_{\alpha,2})&\in&A\\ \vdots&&\vdots&\vdots&\vdots\\ \forall f\in F_m&\qquad&f(s_{\alpha,0},\dots,s_{\alpha,m-1})+c(s_{\alpha,m})&\in&A\end{array}
\end{equation}

  The proof goes by induction on $m$; assume first that $m=0$.
  Since $A$ belongs to an idempotent ultrafilter, it contains an IP-set, say $(\tilde x_\alpha)_{\alpha\in{\mathcal F}}$.
  Since $c$ is IP-regular, we can find an IP-set $(x_\alpha)_{\alpha\in{\mathcal F}}$ such that $\big(c(x_\alpha)\big)$ is a sub-IP-set of $(\tilde x_\alpha)_{\alpha\in{\mathcal F}}$ and hence $c(x_\alpha)\in A$ for each $\alpha\in{\mathcal F}$.
  Let ${\bf s}^{(0)}_\alpha:=x_\alpha$ for each $\alpha\in{\mathcal F}$.

  Now suppose that $m\geq1$ and we have an IP-set in $G^m$
  $$({\bf s}^{(m-1)}_\alpha)_{\alpha\in{\mathcal F}}=\left(\Big(s^{(m-1)}_{\alpha,0},s^{(m-1)}_{\alpha,1},\dots,s^{(m-1)}_{\alpha,m-1}\Big)\right)_{\alpha\in{\mathcal F}}$$
  such that for any $\alpha\in{\mathcal F}$ we have $D(m-1,\F,c;{\bf s}_\alpha^{(m-1)})\subset A$; in other words, if we take $s_i=s^{(m-1)}_{\alpha,i}$ for each $i=0,\dots,m-1$ we get the first $m$ lines of (\ref{eq_Pdeuber3}), for any $\alpha\in{\mathcal F}$.

  Now apply Theorem \ref{thm_GCST} with $H=G^m$, $\Gamma=\Gamma_m$, $F=F_m$ and $(y_\alpha)_{\alpha\in{\mathcal F}}=({\bf s}^{(m-1)}_\alpha)_{\alpha\in{\mathcal F}}$.
  We obtain a sub-IP-set $({\bf t}_\alpha)$ of $({\bf s}^{(m-1)}_\alpha)$ in $G^m$ and some IP set $(x_\alpha)_{\alpha\in{\mathcal F}}$ in $G$ such that
  \begin{equation}\label{eq_thm_generalmain2}\forall\alpha\in{\mathcal F}\quad\forall f\in F_m \qquad\qquad x_\alpha+f({\bf t}_\alpha)\in A.
  \end{equation}

  Since $c$ is IP-regular we can find an IP-set $(y_\beta)_{\beta\in{\mathcal F}}$ in $G$ such that $\big(c(y_\beta)\big)_{\beta\in{\mathcal F}}$ is a sub-IP-set of $(x_\alpha)_{\alpha\in{\mathcal F}}$; in other words, there exist $\alpha_1<\alpha_2<\cdots$ such that $c(y_\beta)=\sum_{i\in\beta}x_{\alpha_i}$ for all $\beta\in{\mathcal F}$. To ease the notation, let $\alpha_\beta$ denote the set $\alpha_\beta:=\bigcup_{i\in\beta}\alpha_i\in{\mathcal F}$. Then \begin{equation}\label{eq_thm_generalmain3}\forall\beta\in{\mathcal F}\qquad\qquad c(y_\beta)=x_{\alpha_\beta}
  \end{equation}

  Now define $({\bf s}^{(m)}_\beta)_{\beta\in{\mathcal F}}$ by taking the corresponding sub-IP-set of $({\bf t}_\alpha)_{\alpha\in{\mathcal F}}$ for the first $m$ coordinates and letting $(y_\beta)_{\beta\in{\mathcal F}}$ be the last coordinate. More precisely we have:
  $${\bf s}_\beta^{(m)}=\big({\bf t}_{\alpha_\beta},y_\beta\big)\in G^{m+1}$$
  Now fix $\beta\in{\mathcal F}$; we need to show that $D(m,\F,c;{\bf s}^{(m)}_\beta)\subset A$.
  If $j\in\{0,1,\dots,m-1\}$ and $f\in F_j$ then
  \begin{equation}\label{eq_proof_thm_4.9}f\Big(s^{(m)}_{\beta,0},\dots,s^{(m)}_{\beta,j-1}\Big)+c(s^{(m)}_{\beta,j})= f\Big(s^{(m-1)}_{\alpha_\beta,0},\dots,s^{(m-1)}_{\alpha_\beta,j-1}\Big)+c(s^{(m-1)}_{\alpha_\beta,j})\end{equation}
  and the expression in (\ref{eq_proof_thm_4.9}) is in $A$ by induction. If $j=m$ then
\begin{equation}\label{eq_proof_thm_4.9.2}
  f\Big(s^{(m)}_{\beta,0},\dots,s^{(m)}_{\beta,j-1}\Big)+c(s^{(m)}_{\beta,j})= f({\bf t}_{\alpha_\beta})+c(y_\beta)= c(y_\beta)+f({\bf t}_{\alpha_\beta})
\end{equation}
  By (\ref{eq_thm_generalmain3}), the expression in (\ref{eq_proof_thm_4.9.2}) is equal to $x_{\alpha_\beta}+f({\bf t}_{\alpha_\beta})$ and hence, by (\ref{eq_thm_generalmain2}), it is also in $A$. We conclude that $D(m,\F,c;{\bf s}^{(m)}_\beta)\subset A$. This finishes the induction process and the proof.
  \end{proof}

We notice that Theorem \ref{thm_generalmain} allows for repeated terms in $D(m,\F,c;{\bf s})$, in other words, one could have $1\leq i\leq j\leq m$ and $f\in F_i,g\in F_j$ such that
$$f(s_0,\dots,s_{i-1})+c(s_i)=g(s_0,\dots,s_{j-1})+c(s_j)$$
In fact, under the same conditions as Theorem \ref{thm_generalmain}, one may not be able to find ${\bf s}$ for which $D(m,\F,c;{\bf s})$ has no repeated terms.
However, if one makes the additional assumption that for every $j\in\{1,\dots,m\}$ and every $f,g\in F_j$ the set $\big\{{\bf x}\in G^j:f({\bf x})=g({\bf x})\big\}$ is finite, then one can modify the above proof to guarantee the additional property that $D(m,\F,c;{\bf s})$ has no repeated terms.

Indeed, observe that this condition implies that, for every $j\in\{1,\dots,m\}$, the set
$$\big\{{\bf x}\in G^j:(\exists f,g\in F_j):f({\bf x})=g({\bf x})\big\}$$
is finite.
Thus, given any IP-set $({\bf x}_\alpha)_{\alpha\in{\mathcal F}}$ in $G^j$ there exists a sub-IP-set $({\bf y}_\beta)_{\beta\in{\mathcal F}}$ such that for all $\beta\in{\mathcal F}$ and $f,g\in F_j$ one has $f(y_\beta)\neq g(y_\beta)$.
Only one modification of the proof of Theorem \ref{thm_generalmain} is needed to obtain this condition:
after choosing the sub-IP-set $({\bf t}_\alpha)$ of $({\bf s}_\alpha^{(m-1)})$ with the property (\ref{eq_thm_generalmain2}), pass to a further sub-IP-set $({\bf y}_\beta)$ of $({\bf t}_\alpha)$ with the property that for all $\beta\in{\mathcal F}$ and all $f,g\in F_j$ one has $f(y_\beta)\neq g(y_\beta)$.

The following theorem summarizes the above discussion.
\begin{theorem}
  Let $G,p,c,m,F_1,\dots,F_m,\F$ be as in Theorem \ref{thm_generalmain}.
  Assume that for every $j\in\{1,\dots,m\}$ and every $f,g\in F_j$ the set $\big\{{\bf x}\in G^j:f({\bf x})=g({\bf x})\big\}$ is finite.
  Then for any $A\in p$ there exists an IP-set $\big({\bf s}_\alpha\big)_{\alpha\in{\mathcal F}}$ in $G^{m+1}$ such that $D(m,\F,c;{\bf s}_\alpha)$ is contained in $A$ and has no repeated terms, in the sense that for every $\alpha\in{\mathcal F}$ and for all $i,j$ with $1\leq i\leq j\leq m$ and $f\in F_i,g\in F_j$ we have
$$f(s_0,\dots,s_{i-1})+c(s_i)\neq g(s_0,\dots,s_{j-1})+c(s_j)$$
\end{theorem}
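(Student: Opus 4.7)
The plan is to refine the inductive construction in the proof of Theorem \ref{thm_generalmain} by inserting, at each stage, a further sub-IP-set selection that eliminates coincidences among the generated terms. The key auxiliary fact I would establish first is: given any IP-set $({\bf x}_\alpha)_{\alpha\in{\mathcal F}}$ in a countable commutative semigroup $H$ and any finite $E\subset H$, there is a sub-IP-set $({\bf y}_\beta)_{\beta\in{\mathcal F}}$ with ${\bf y}_\beta\notin E$ for all $\beta$. This follows from Hindman's theorem applied to the $2$-coloring of the IP-set by membership in $E$: a sub-IP-set is infinite while $E$ is finite, so the monochromatic sub-IP-set must lie in the complement of $E$. (Equivalently, any IP-set belongs to some idempotent ultrafilter, which, being non-principal, does not contain the finite set $E$.)

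I would then run the same induction on $m$ as in the proof of Theorem \ref{thm_generalmain}, with one addition. After obtaining the IP-set $({\bf t}_\alpha)$ in $G^m$ given by Theorem \ref{thm_GCST}, I would apply the auxiliary fact to pass to a sub-IP-set $({\bf t}'_\gamma)$ lying in the complement of the set $E_m=\{{\bf x}\in G^m:\exists f\neq g\text{ in }F_m,\ f({\bf x})=g({\bf x})\}$, which is finite by hypothesis. Property (\ref{eq_thm_generalmain2}) is inherited by sub-IP-sets (passing to the correspondingly indexed sub-IP-set of $(x_\alpha)$), so the rest of the construction goes through and produces an IP-set $({\bf s}^{(m)}_\beta)$ with $D(m,\F,c;{\bf s}^{(m)}_\beta)\subset A$; in addition, $f(s_0,\ldots,s_{m-1})\neq g(s_0,\ldots,s_{m-1})$ for $f\neq g$ in $F_m$, which settles the distinctness assertion in the equal-level case $i=j$.

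The main obstacle is the cross-level case $i<j$: ensuring $f(s_0,\ldots,s_{i-1})+c(s_i)\neq g(s_0,\ldots,s_{j-1})+c(s_j)$ for $f\in F_i$ and $g\in F_j$. The zero set of the corresponding expression in $G^{j+1}$ need not be finite, so the auxiliary fact does not apply directly. However, once the prefix $(s_0,\ldots,s_{j-1})$ is fixed, the forbidden values for $c(s_j)$ form a finite set of cardinality at most $\sum_{i<j}|F_i|\cdot|F_j|$, and one can fold avoidance of these values into the construction of the IP-set $(y_\beta)$ at the inductive step by exploiting the IP-regularity of $c$ together with the auxiliary fact applied at each stage of the iterative selection inside the proof of Theorem \ref{thm_GCST}. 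The extra bookkeeping is delicate because the forbidden values themselves depend on the index $\beta$ (through $\alpha_\beta$), but can be arranged by a diagonal-type refinement during the stepwise choice of the $x_{n+1},\alpha_{n+1}$ in that proof, each time enlarging the finitely many values to be avoided by $x_{n+1}$ and intersecting the working set in $p$ with the complement of this finite set.
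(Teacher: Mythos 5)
For the equal-level coincidences ($i=j$, $f\neq g$) your argument is exactly the paper's: the paper's entire proof consists of the single modification of passing, after the application of Theorem \ref{thm_GCST} at the $j$-th inductive step, to a further sub-IP-set of $({\bf t}_\alpha)$ avoiding the finite set $\big\{{\bf x}\in G^j:(\exists f,g\in F_j)\,f({\bf x})=g({\bf x})\big\}$, justified exactly as in your auxiliary fact; property (\ref{eq_thm_generalmain2}) survives passage to sub-IP-sets, so this part of your proposal is a match.

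For the cross-level case $i<j$ you have correctly put your finger on the point where the finiteness hypothesis does not apply directly; the paper's written proof in fact stops at the same-level modification and never addresses $i<j$, so here you are being more careful than the source. That said, your treatment of this case is a plan rather than a proof, and the deferred bookkeeping is where the substance lies. Concretely: in the construction one has $c(s_{\beta,j})=x_{\alpha_\beta}=\sum_{i\in\beta}x_{\alpha_i}$, so what must be avoided is not a finite set of values for a single $x_{n+1}$ but a condition on every sum $x_\gamma+x_{n+1}$ with $\gamma\subset[n]$; worse, the forbidden set for that sum depends on $z_{\gamma\cup\{n+1\}}=z_\gamma+z_{n+1}$, i.e.\ on the very element $z_{n+1}$ being chosen at the same stage via the R-family property, so one cannot simply intersect the working set $C$ with the complement of a predetermined finite set. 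One also needs each set $\{z\in G:b+z=a\}$ to be finite (automatic in a group, not in a general commutative semigroup, which is the ambient setting of Theorem \ref{thm_generalmain}) and $p$ to be non-principal so that complements of finite sets lie in $p$. None of these obstacles looks fatal when $G$ is a group, but until they are resolved the cross-level half of the statement is not established by your proposal (nor, for what it is worth, by the paper's own argument).
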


We will now deduce Theorems \ref{thm_statements_central} and \ref{thm_statements_Dset} from our abstract Theorem \ref{thm_generalmain}.

\begin{proof}[Proof of Theorem \ref{thm_statements_central}]
Let $G$ be a countable commutative group and let $A\subset G$ be a central set.
Thus, there exists a minimal idempotent $p\in\beta G$ with $A\in p$.

We start by proving part (1).
Assume $(m,\F,c)$ is a \shp{} in $G$ where $c$ is the identity map and that $F_j\subset\HH(G^j,G)$ for each $j=1,\dots,m$.
The endomorphism $c$ is trivially IP-regular.
For each $j\in\N$ let $\Gamma_j=\HH(G^j,G)$; it follows from Proposition \ref{prop_IPvdWgroups} that each $\Gamma_j$ is an R-family with respect to $p$.
Finally, by Example \ref{example_homolicit} each $\Gamma_j$ is licit.
We can now apply Theorem \ref{thm_generalmain} to find ${\bf s}\in G^{m+1}$ with $D(m,\F,c;{\bf s})\subset A$ as desired.

Next we prove part (2).
Assume $G$ is a group and $(m,\F,c)$ is a \shp{} in $G$ where $c$ is an endomorphism whose image has finite index in $G$ and, for each $j=1,\dots,m$, $F_j\subset\PP(G^j,G)$.
To see that $c$ is IP-regular, observe that any IP-set has a sub-IP-set contained in the image of $c$, and that IP-sets carry through homomorphisms.
For each $j\in\N$ let $\Gamma_j=\PP(G^j,G)$; it follows from Example \ref{example_minimalFamOfRec} that each $\Gamma_j$ is an R-family with respect to $p$.
By Example \ref{example_polylicit} each $\Gamma_j$ is licit.
We can now apply Theorem \ref{thm_generalmain} to find ${\bf s}\in G^{m+1}$ with $D(m,\F,c;{\bf s})\subset A$ as desired.

\end{proof}

\begin{proof}[Proof of Theorem \ref{thm_statements_Dset}]
Let $n\in\N$, let $G=\Z^n$ and let $A\subset\Z^n$ be a $D$-set.
Thus, there exists an essential idempotent $p\in\beta(\Z^n)$ with $A\in p$.
Assume $(m,\F,c)$ is a \shp{} in $\Z^n$ where $c$ is an endomorphism whose image has finite index in $\Z^n$ and, for each $j=1,\dots,m$, $F_j\subset\PP(\Z^{nj},\Z^n)$.
To see that $c$ is IP-regular, observe that any IP-set has a sub-IP-set contained in the image of $c$, and that IP-sets carry through homomorphisms.

For each $j\in\N$ let $\Gamma_j=\PP(\Z^{nj},\Z^n)$; it follows from Example \ref{example_essentialFamOfRec} that each $\Gamma_j$ is an R-family with respect to $p$.
By Example \ref{example_polylicit} each $\Gamma_j$ is licit.
We can now apply Theorem \ref{thm_generalmain} to find ${\bf s}\in G^{m+1}$ with $D(m,\F,c;{\bf s})\subset A$ as desired.
\end{proof}

\section{Proofs of Theorems \ref{thm_lambdacrich} and \ref{thm_finitistic}}\label{section_finitistic}
The main purpose of this section is to prove Theorems \ref{thm_lambdacrich} and \ref{thm_finitistic}.

Our proof of Theorem \ref{thm_finitistic} is inspired by a proof of Deuber's original result presented in \cite{Gunderson02}.
Before we start with the proofs we need a definition.

\begin{definition}
Let $G$ be a countable commutative semigroup, let $(m,\F,c)$ be a \shp{} in $G$, let ${\bf s}\in G^{m+1}$ and let $k\in\{0,1,\dots,m\}$.
The \emph{$k$-th line} of the $(m,\F,c)$-set $D(m,\F,c;{\bf s})$ is the set
$$
\{f(s_0,\dots,s_{k-1})+c(s_k):f\in F_k\}$$
\end{definition}
Observe that $D(m,\F,c;{\bf s})$ is the union of its $m+1$ lines.

The proof of Theorem \ref{thm_finitistic} goes by induction.
Due to its complicated nature it is convenient to isolate the induction step as a separate lemma.

\begin{lemma}\label{lemma_induction}
  Let $G$ be a countable commutative semigroup with identity $0$, let $\Lambda_c$ be the \shd{} defined in Theorem \ref{thm_finitistic}, let $(m,\F,c)\in\Lambda_c$ and let $r\in\N$. Then there exists a \shp{} $(M,\vH,C)\in\Lambda_c$ such that for any $r$-coloring of an $(M,\vH,C)$-set such that the last $k$ lines are each monochromatic (but different lines can have different colors) there exists a subset which is an $(m,\F,c)$-set whose last $k+1$ lines are each monochromatic.

  Moreover, if $c$ is the identity map, we can take $C$ to be the identity map as well, and if $c$ is in the center of $\E(G)$ we can take $C$ to be in the center of $\E(G)$.
\end{lemma}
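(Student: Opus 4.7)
The plan is to use the Hales--Jewett theorem as the main combinatorial engine to produce one additional monochromatic line of the $(m,\F,c)$-subset beyond the $k$ inherited from the hypothesis on $(M,\vH,C)$. Writing the concordance of $c$ with $\F$ as $c\circ a_f=f\circ\mathbf{b}$ for a non-zero $b\in\E(G)$ and homomorphisms $a_f$, set $N:=HJ(|F_{m-k}|,r)$ and take $M$ large. The generators $S_0,\dots,S_M$ of any $(M,\vH,C)$-set will be organized so that $S_0,\dots,S_{N(m-k)-1}$ split into $N$ ``lower'' blocks of size $m-k$ (for the Hales--Jewett argument on line $N(m-k)$), while the ``upper'' generators $S_{N(m-k)+1},\dots,S_M$ are aligned with the subset's top $k$ lines, which will fall in the hypothesized mono set lines $M-k+1,\dots,M$.

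I would populate $\vH$ with three families: (a) for each $w=(w_1,\dots,w_N)\in F_{m-k}^N$, include in $H_{N(m-k)}$ the homomorphism
\[
h_w(g_0,\dots,g_{N(m-k)-1}):=\sum_{i=1}^N w_i\bigl(b(g_{(i-1)(m-k)}),\dots,b(g_{i(m-k)-1})\bigr),
\]
which by concordance equals $c\bigl(\sum_i a_{w_i}(g_{(i-1)(m-k)},\dots,g_{i(m-k)-1})\bigr)$; (b) for each $j\in\{m-k+1,\dots,m\}$ and each $f\in F_j$, an ``extension'' of $f$ in $H_{M-m+j}$ that aligns subset line $j$ with set line $M-m+j$; (c) auxiliary homomorphisms in various $H_J$ built from $c$, $b$, and the $a_f$'s to place the subset's lower lines $0,\dots,m-k-1$ into the set. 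One then chooses $C$ concordant with $\vH$ via some $B$: $C=c$ with $B=b$ suffices in the identity case (both $c$ and $b$ are the identity), $C=c^2$ with $B=c$ suffices in the central case, and in general a suitable composition of $c$ and $b$ is required.

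Given any $r$-coloring of the $(M,\vH,C)$-set whose last $k$ lines are monochromatic, I would apply Hales--Jewett to the coloring of $\{h_w(\vec S)+C(S_{N(m-k)}):w\in F_{m-k}^N\}$ on line $N(m-k)$ to obtain a monochromatic combinatorial line with wildcard positions $A\subseteq[N]$ and constants $v_i\in F_{m-k}$ for $i\notin A$. Set
\[
s_j:=b\Bigl(\sum_{i\in A}S_{(i-1)(m-k)+j}\Bigr)\text{ for }j<m-k,\quad s_{m-k}:=\sum_{i\notin A}a_{v_i}(S_{(i-1)(m-k)},\dots,S_{i(m-k)-1})+S_{N(m-k)},
\]
and $s_j:=S_{M-m+j}$ for $j>m-k$ (with minor cosmetic modifications when $C\neq c$). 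Using the additive decomposition $f(x_1,\dots,x_j)=\sum_l f^{(l)}(x_l)$ of each homomorphism $f\in F_j$ together with the concordance identity, a direct computation shows that line $m-k$ of $D(m,\F,c;{\bf s})$ equals $\{h_{v(f)}(\vec S)+C(S_{N(m-k)}):f\in F_{m-k}\}$, i.e.\ the mono HJ line; lines $m-k+1,\dots,m$ of the subset lie inside the mono set lines $M-k+1,\dots,M$; and the auxiliary homomorphisms in (c) absorb lines $0,\dots,m-k-1$ into the set. Thus the last $k+1$ lines of the subset are each monochromatic.

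The main technical obstacle will be the design and concordance verification of family (c) in the general concordant case. Subset line $j<m-k$ has elements of the form $c\bigl(\sum_l a_f^{(l)}(S_?)+b(S_?)\bigr)$; the natural fix is to take $C=c\circ b$ and place these elements in set line $J=(\max A-1)(m-k)+j$, so that $C(S_J)=c(b(S_J))$ directly absorbs the $b(S_J)$-piece without requiring subtraction (which would be unavailable in a general semigroup). The concordance of $C=c\circ b$ with the HJ family $h_w$ then reduces to commutativity of $b$ with the partial homomorphisms $a_{w_i}^{(l)}$, which is immediate when $b$ is the identity or when $b=c$ is central (yielding the ``moreover'' statement), and which is handled in the general case by an appropriate iterated composition of $c$ and $b$ in both $C$ and the chosen $B$.
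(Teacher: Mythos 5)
Your proposal is correct and follows essentially the same route as the paper's proof: Hales--Jewett applied to the $(m-k)$-block structure of line $N(m-k)$, the concordance identity $c\circ a_f=f\circ\mathbf{b}$ to build that line's homomorphisms, $C=c\circ b$, the same three-case definition of the new generators (wildcard sums for the low coordinates, the $a_{v_i}$-sum for coordinate $m-k$, top generators for the rest), and absorption of the low lines into the set line indexed by the largest wildcard block. The only deviations are cosmetic (you omit the $b(\cdot)$ on $S_{N(m-k)}$ and on the top generators, which you correctly flag as needed when $C\neq c$), and your treatment of the concordance of $C$ with $\vH$ is at the same level of detail as the paper's.
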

\begin{proof}
 Since any subset of a monochromatic set is monochromatic, we can work with conveniently chosen supersets of the $F_i$'s.
   Hence we may and will assume that each $F_i$ contains the projection homomorphisms  $\pi_j:G^i\to G$ (in each coordinate) and the zero homomorphism.
   We will also add to each $F_i$ all the homomorphisms of the form
   $$\phi(x_0,\dots,x_{i-1})=f(x_0,\dots,x_{j-1}) \qquad\text{with }f\in F_j\text{ and }j<i$$
   The main technical tool of our proof is Hales-Jewett's theorem (Theorem \ref{thm_halesjewett}).
   Let $n=HJ(|F_{m-k}|,r)$ be such that any $r$-coloring of $F_{m-k}^n$ contains a monochromatic combinatorial line.
   Since $c$ is concordant with $\F$, there exists an endomorphism $b:G\to G$ and, for each $f\in F_{m-k}$, there exists $a_f\in\HH(G^{m-k},G)$ such that $c\circ a_f=f\circ\mathbf{b}$ (where $\mathbf{b}\in\E(G^{m-k})$ is defined by $\mathbf{b}(x_1,\dots,x_{m-k})=b(x_1)+\cdots+b(x_{m-k})$).

   For convenience we denote by $N$ the product $N=n(m-k)$ and let $M=N+k$.
   For each $j=1,\dots,M$, let $H_j$ be a finite set of homomorphisms from $G^j\to G$ that will be determined later.
   Let $H_N$ be the set of all homomorphisms $\phi:G^N\to G$ of the form
   \begin{equation*}\phi(t_0,\dots,t_{N-1})=\sum_{i=0}^{n-1}f_i\circ \mathbf{b}(t_{i(m-k)},t_{i(m-k)+1},\dots,t_{i(m-k)+m-k-1})\end{equation*}
   with $f_0,\dots,f_{n-1}\in F_{m-k}$.
   Finally, make $\vH=(H_1,\dots,H_M)$ and $C=c\circ b$.
   Observe that if $c$ is in the center of $\E(G)$, then $b=c$, and hence $C$ is also in the center of $\E(G)$.
   Moreover, if $c$ is the identity map, then $b$ is also the identity map, and so is $C$.

   Let $t_0,\dots,t_M\in G$ be arbitrary and let $S_H$ be the $(M,\vH,C)$-set they induce.
   It will simplify considerably the notation to let $$T_i:=(t_{i(m-k)},t_{i(m-k)+1},\dots,t_{i(m-k)+m-k-1})\in G^{m-k}$$ for each $i=0,\dots,n-1$. Thus, in particular, we can write
   $$H_N=\left\{\phi:(t_0,\dots,t_{N-1})\mapsto\sum_{i=0}^{n-1}f_i\circ\mathbf{b}(T_i):f_0,\dots,f_{n-1}\in F_{m-k}\right\}$$
   Assume that we are given a coloring of $S_H$ into $r$ colors such that each of the last $k$ lines are monochromatic (but not necessarily of the same color).

   Color $w=(f_0,\dots,f_{n-1})\in F_{m-k}^n$ with the color of
   \begin{equation}\label{eq_color}\sum_{i=0}^{n-1}f_i\circ\mathbf{b}(T_i)+C(t_N)\end{equation}
   Observe that the elements in (\ref{eq_color}) are in the $N$th line of $S_H$.
   It follows from the Hales-Jewett theorem that one can find a variable word $w\in(F_{m-k}\cup\{*\})^n$  which induces a monochromatic combinatorial line.
   We let $\langle n\rangle=\{0,\dots,n-1\}$, let $A=\{i\in\langle n\rangle:w_i=*\}$ and let $B=\langle n\rangle\setminus A$.
   Now define
   \begin{equation}\label{eq_udef}u_j=\left\{\begin{array}{rcl}\displaystyle b(t_{M-m+j}) &\text{if}&m-k<j\leq m\\ \displaystyle\sum_{i\in B}a_{w_i}(T_i)+b(t_N)&\text{if}&j=m-k\\ \displaystyle\sum_{i\in A}b(t_{i(m-k)+j})  &\text{if}&0\leq j<m-k\end{array}\right.\end{equation}

Note that, for each $\ell=0,\dots,m$, the point $u_{m-\ell}$ depend only on $t_0,\dots,t_{M-\ell}$.

We claim that, with the right choice of $\vH$, the $(m,\F,c)$-set $S_F$ generated by $u_0,\dots,u_m$ is a subset of $S_H$ and that each of the last $k+1$ lines of $S_F$ are monochromatic.
Indeed, for $m-k<j\leq m$, the $j$-th line of $S_F$ is the set
$$\{f(u_0,\dots,u_{j-1})+c(u_j):f\in F_j\}=\{f(u_0,\dots,u_{j-1})+C(t_{M-m+j}):f\in F_j\}$$
This will be a subset of the line $M-m+j$ of $S_H$ if we make $H_{M-m+j}$ contain all the homomorphisms $\phi$ of the form
$$\phi(t_0,\dots,t_{M-m+j-1})=f(u_0,\dots,u_{j-1})$$
for any $f\in F_j$, any possible choice of $A,B\subset\langle n\rangle$ and any $w_i\in F_{m-k}$ (with the $u_j$'s being determined by (\ref{eq_udef})).
Hence the $j$-th line of $S_F$ is monochromatic.

The $(m-k)$-th line of $S_F$ is the set
\begin{eqnarray*}
  &&\{f(u_0,\dots,u_{m-k-1})+c(u_{m-k}):f\in F_{m-k}\}\\&=&
\left\{f\left(\sum_{i\in A}\mathbf{b}(T_i)\right)+\sum_{i\in B}c\circ a_{w_i}(T_i)+C(t_N):f\in F_{m-k}\right\}\\&=& \left\{\sum_{i\in A}f\circ\mathbf{b}(T_i)+\sum_{i\in B}w_i\circ\mathbf{b}(T_i)+C(t_N):f\in F_{m-k}\right\}
\end{eqnarray*}
which is precisely the monochromatic combinatorial line found by applying the Hales-Jewett's theorem. Hence the $(m-k)$-th line of $S_F$ is inside $S_H$ and it is monochromatic.

For $j<m-k$, the $j$-th line of $S_F$ is the set
\begin{align*}&\{f(u_0,\dots,u_{j-1})+c(u_j):f\in F_j\}\\=&\left\{f(u_0,\dots,u_{j-1})+c\left(\sum_{i\in A}b(t_{i(m-k)+j})\right):f\in F_j\right\}\end{align*}
Let $a=\max A$.
Then the $j$-th line of $S_F$ can be written as
$$\left\{f(u_0,\dots,u_{j-1})+\sum_{i\in A\setminus\{a\}}C(t_{i(m-k)+j})+C(t_{a(m-k)+j}):f\in F_j\right\}$$
which will be contained in the $a(m-k)+j$-th line of $S_H$ if we make $H_{a(m-k)+j}$ contain all the homomorphisms $\phi$ of the form
$$\phi(t_0,\dots,t_{a(m-k)+j})=\sum_{i\in A}f(u_0,\dots,u_{j-1})+C(t_{i(m-k)+j})$$
for any $f\in F_j$ and any possible choice of $A\subset\langle a\rangle$, where the dependence of $u_i$ on $t_i$ is given by (\ref{eq_udef}).

It is routine to verify that $C$ is concordant with $H$.
This finishes the proof.
\end{proof}

We move now to proving Theorem \ref{thm_finitistic}.

\begin{proof}[Proof of Theorem \ref{thm_finitistic}]
If $r=1$ there is nothing to prove so we assume $r>1$.
Let $(m,\F,c)\in\Lambda_c$, let $n=m(r-1)$ and, for each $j=1,\dots,n$, let $H^{(0)}_j$ be a finite set of homomorphisms $\phi:G^j\to G$ of the following form. Take $\ell\in\{1,\dots,j\}$ and let $0\leq i_1<\cdots<i_\ell<j$ be arbitrary.
 Let $f\in F_\ell$ and define
 $$\phi_{f,i_1,\dots,i_\ell}(x_0,\dots,x_{j-1})=f(x_{i_1},x_{i_2},\dots,x_{i_\ell})$$
 We let $H^{(0)}_j=\Big\{\phi_{f,i_1,\dots,i_\ell}\Big|\ell\in\{1,\dots,j\},f\in F_\ell,0\leq i_1<\cdots<i_\ell<j\Big\}$.
 Now let $\vH^{(0)}=(H_1^{(0)},\dots,H^{(0)}_{m_0})$.
 Finally, put $c_0=c$ and $m_0=n$.

Applying repeatedly Lemma \ref{lemma_induction}, we construct inductively sequences
$(m_i)_{i=0}^n$, $\big(\vH^{(i)}\big)_{i=0}^n$ and $(c_i)_{i=0}^n$, such that the \shp{} $(m_i,\vH^{(i)},c_i)$ satisfies the conclusion of Lemma \ref{lemma_induction} when we input the \shp{} $(m_{i-1},\vH^{(i-1)},c_{i-1})$ and set $k=n-i$.

 Let $M=m_n$, $\vH=\vH^{(n)}$ and $C=c_n$.
 By construction, for any $r$-coloring of an $(M,\vH,C)$-set $S_H$ we can find a subset which is a $(m_{n-1},\vH^{(n-1)},c_{n-1})$-set with the last line monochromatic.
 Iterating, we obtain for each $i=0,\dots,n$, a sub $(m_i,\vH^{(i)},c_i)$-set with the last $n-i$ lines monochromatic.
 In particular, setting $i=0$ we obtain a $(n,\vH^{(0)},c)$-set with each line monochromatic (but different lines can have different colors).

 Let ${\bf t}=(t_0,\dots,t_n)$ be the generator of this $(n,\vH^{(0)},c)$-set.
 Applying the pigeonhole principle one can find, among the $n+1$ lines of $D(n,\vH^{(0)},c;{\bf t})$, $m+1$ lines of the same color, say the lines $\ell_0,\ell_2,\dots,\ell_m$.
 For each $j=0,\dots,m$ let $s_j=t_{\ell_j}$ and let ${\bf s}=(s_0,\dots,s_m)$.
 By the construction of $H^{(0)}_j$ we deduce that the $j$-th line of $D(m,\F,c;{\bf s})$ is contained in $\ell_j$-th line of $D(n,\vH^{(0)},c;{\bf t})$.
 Therefore $D(m,\F,c;{\bf s})$ is monochromatic as desired.
\end{proof}

To derive Theorem \ref{thm_lambdacrich} from Theorem \ref{thm_finitistic} we need first to establish a lemma.

\begin{definition}
Given two \shp s $\lambda_1=(m_1,\F^{(1)},c_1)$ and $\lambda_2=(m_2,\F^{(2)},c_2)$ in a countable commutative semigroup $G$, we say that \emph{$\lambda_1$ contains $\lambda_2$} if for every ${\bf s}_1\in G^{m_1+1}$ there exists ${\bf s}_2\in G^{m_2+1}$ such that $$D(m_1,\F^{(1)},c_1;{\bf s}_1)\supset D(m_2,\F^{(2)},c_2;{\bf s}_2).$$
\end{definition}
\begin{lemma}\label{lemma_dsyscontained}
Let $G$ be a countable commutative semigroup and let $\Lambda_t$ be the \shd{} defined in Theorem \ref{thm_lambdacrich}.
For any two \shp s $\lambda_1,\lambda_2\in\Lambda_t$ there exists some \shp{} $\lambda\in\Lambda$ which contains both $\lambda_1$ and $\lambda_2$.
\end{lemma}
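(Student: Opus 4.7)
The plan is to construct a shape $\lambda=(m,\F,c)$ of dimension $m=m_1+m_2+1$ with $c=c_1\circ c_2$, exploiting the centrality assumption to realize both $\lambda_1$- and $\lambda_2$-sets as subsets of any $\lambda$-set. Since $c_1,c_2$ lie in the center of $\E(G)$, so does their composition $c$, and (as in the discussion preceding the concordance definition) centrality implies that for every $f\in\HH(G^j,G)$ one has $c_i\circ f = f\circ\mathbf{c_i}$, where $\mathbf{c_i}:G^j\to G^j$ denotes the diagonal action. This identity is the workhorse that lets us move $c_1$ and $c_2$ through the homomorphisms in $\F^{(1)}$ and $\F^{(2)}$.

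Given ${\bf s}=(s_0,\dots,s_m)\in G^{m+1}$, I would set
\[
{\bf s}^{(1)}=\bigl(c_2(s_0),c_2(s_1),\dots,c_2(s_{m_1})\bigr),\qquad
{\bf s}^{(2)}=\bigl(c_1(s_{m_1+1}),c_1(s_{m_1+2}),\dots,c_1(s_{m_1+m_2+1})\bigr).
\]
For each $f\in F_j^{(1)}$ (with $1\le j\le m_1$) the centrality identity gives
\[
f({\bf s}^{(1)}_0,\dots,{\bf s}^{(1)}_{j-1}) + c_1({\bf s}^{(1)}_j)
= (c_2\circ f)(s_0,\dots,s_{j-1}) + c(s_j),
\]
so placing $c_2\circ f$ into $F_j$ forces this element to appear in the $j$-th line of $D(\lambda;{\bf s})$. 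Similarly, for $f\in F_j^{(2)}$ (with $1\le j\le m_2$),
\[
f({\bf s}^{(2)}_0,\dots,{\bf s}^{(2)}_{j-1}) + c_2({\bf s}^{(2)}_j)
= (c_1\circ f)(s_{m_1+1},\dots,s_{m_1+j}) + c(s_{m_1+j+1}),
\]
which I would place in the $(m_1+j+1)$-st line of $\lambda$ by adding to $F_{m_1+j+1}$ the homomorphism $g(x_0,\dots,x_{m_1+j}) := (c_1\circ f)(x_{m_1+1},\dots,x_{m_1+j})$, which simply ignores the first $m_1+1$ coordinates. Finally, to capture the $0$-th line $\{c_2({\bf s}^{(2)}_0)\}=\{c(s_{m_1+1})\}$ of $D(\lambda_2;{\bf s}^{(2)})$, I insert the zero homomorphism into $F_{m_1+1}$; the $0$-th line of $D(\lambda_1;{\bf s}^{(1)})$ is $\{c(s_0)\}$, which is automatically the $0$-th term of $D(\lambda;{\bf s})$ and needs no extra work.

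All the new homomorphisms added to the $F_j$'s are compositions of center elements of $\E(G)$ with the existing finitely many $f\in F_j^{(i)}$ (possibly precomposed with coordinate projections, which are themselves homomorphisms from $G^j$ to lower powers), so each $F_j$ remains a finite subset of $\HH(G^j,G)$, and $c=c_1\circ c_2$ remains in the center of $\E(G)$; thus $\lambda\in\Lambda_t$. By construction $D(\lambda;{\bf s})\supset D(\lambda_1;{\bf s}^{(1)})\cup D(\lambda_2;{\bf s}^{(2)})$, which is exactly the containment required. There is no serious obstacle here beyond the bookkeeping of how each line of $\lambda_1$ and $\lambda_2$ embeds into the correct line of $\lambda$; the key conceptual point is that centrality is what allows a single $c$ to simultaneously play the role of both $c_1$ and $c_2$ after an appropriate rescaling of the generator.
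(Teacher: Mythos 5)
Your proof is correct and follows essentially the same route as the paper's: take $c=c_1\circ c_2$, rescale the generator by $c_2$ (resp.\ $c_1$) to obtain the generator of the $\lambda_1$- (resp.\ $\lambda_2$-) set, and enlarge the $F_j$'s by the old homomorphisms precomposed with the corresponding diagonal maps, using centrality only to guarantee that $c_1\circ c_2$ is again in the center of $\E(G)$. The only difference is bookkeeping: the paper overlays the two shapes on $m=\max(m_1,m_2)$ lines (padding with empty $F_j$'s and sending line $n$ of $\lambda_i$ into line $n$ of $\lambda$), whereas you concatenate them into $m_1+m_2+1$ lines with disjoint generator blocks, which costs you the extra zero homomorphism to capture the $0$-th line of $\lambda_2$ but changes nothing essential.
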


\begin{proof}
Let $(m_i,F^{(i)},c_i)=\lambda_i$ for $i=1,2$.
Let $c=c_1\circ c_2=c_2\circ c_1$ and let $m=\max(m_1,m_2)$.
We can assume that $m_1=m_2=m$, putting $F^{(i)}_k=\emptyset$ for $k>m_i$ if necessary.
For each $i=1,2$ and $n=1,\dots,m$, let ${\bf c}_i\in\E(G^n)$ be the map ${\bf c}_i:(g_0,\dots,g_{n-1})\mapsto\big(c_i(g_0),\dots,c_i(g_{n-1})\big)$ and let
$$F_n=\Big\{f\circ {\bf c}_2:f\in F_n^{(1)}\Big\}\cup\Big\{f\circ {\bf c}_1:f\in F_n^{(2)}\Big\}$$
Let $F=(F_1,\dots,F_m)$ and let $\lambda=(m,\F,c)$.
Since both $c_1$ and $c_2$ are in the center of $\E(G)$, so is $c$ and hence $\lambda\in\Lambda_t$.

Finally, given any ${\bf s}\in G^m$ we need to show that $D(m,\F,c;{\bf s})$ contains an $(m_i,F^{(i)},c_i)$-set for each $i=1,2$.
Let ${\bf s}^{(1)}={\bf c}_2({\bf s})=\big(c_2(s_0),\dots,c_2(s_m)\big)$ and ${\bf s}^{(2)}={\bf c}_1({\bf s})=\big(c_1(s_0),c_1(s_1),\dots,c_1(s_m)\big)$.
We claim that
$$D\big(m_i,F^{(i)},c_i;{\bf s}^{(i)}\big)\subset D(m,\F,c;{\bf s})$$
Indeed, for any $i=1,2$, any $n=0,1,\dots,m$ and any $f\in F_n^{(i)}$ we have
\begin{equation}\label{eq_lemma_dsyscontained}c_i(s_n^{(i)})+f\big(s_{n-1}^{(i)},\dots,s_0^{(i)}\big)=c_i\big(c_{3-i}(s_n)\big)+f\big({\bf c}_{3-i}(s_{n-1},\dots,s_0)\big)\end{equation}
Since $c_i\circ c_{3-i}=c$ and for $f\in F_n^{(i)}$ we have $f\circ{\bf c}_{3-i}\in F_n$, we deduce that the element $(\ref{eq_lemma_dsyscontained})$ is in $D(m,\F,c;{\bf s})$ as desired.
\end{proof}

\begin{proof}[Proof of Theorem \ref{thm_lambdacrich}]

Let $A$ be a $\Lambda_t$-rich set and consider an arbitrary finite partition $A=A_1\cup\cdots\cup A_r$.
Assume none of the $A_i$ is $\Lambda$-large.
Then for each $i\in\{1,\dots,r\}$ there exists a \shp{} $\lambda_i\in\Lambda_t$ such that $A_i$ does not contain an $(m,\F,c)$-set of \shp{} $\lambda_i$.

Applying Lemma \ref{lemma_dsyscontained} $r-1$ times, one can find a \shp{} $\lambda\in\Lambda_t$ that contains each of the \shp s $\lambda_1,\dots,\lambda_r$.
Therefore, none of the $A_i$ can contain an $(m,\F,c)$-set of \shp{} $\lambda$.

It follows from Theorem \ref{thm_finitistic} that there exists a shape $(M,\vH,C)\in\Lambda_t$ such that any partition of an $(M,\vH,C)$-set into $r$ cells contains a $(m,\F,c)$-set in a single cell.
On the one hand, because $A$ was assumed to be $\Lambda_t$-large, it will contain an $(M,\vH,C)$-set.
On the other hand, this implies that some $A_i$ contains an $(m,\F,c)$-set, contradicting the construction above.
This contradiction implies that some $A_i$ must be $\Lambda_t$-large.
\end{proof}

Theorems \ref{thm_finitistic} and \ref{thm_lambdacrich} deal only with \shp s $(m,\F,c)$ where each component $F_i$ of $\F$ is a set of homomorphisms.
It is not clear if the methods used to prove them can be adapted to more general \shd s, such as those where the $F_i$ are allowed to contain polynomial maps.

\section{$\Lambda$-Systems}\label{sec_mfcsystems}

In this section we prove Theorems \ref{thm_DLambdamonochromatic} and \ref{thm_systemspartitionregularity} concerning the partition regularity of $\Lambda$-systems.

We start with the proof of Theorem \ref{thm_DLambdamonochromatic}.

\begin{proof}[Proof of Theorem \ref{thm_DLambdamonochromatic}]
We show that in fact any central set contains a $\Lambda$-system.
Let $A$ be a central set and let $p\in\beta G$ be a minimal idempotent such that $A\in p$.
  Let $B=\{n\in A:A-n\in p\}$.
  Observe that $B\in p$ and that $B-n\in p$ for every $n\in B$.

  Next, enumerate $\Lambda=\{\lambda_1,\lambda_2,\dots\}$ and let $\lambda_i=(m_i,\F^{(i)},c_i)$.
  It follows from Theorem \ref{thm_generalmain} that there exists ${\bf s}_1\in G^{m_1+1}$ such that $D(m_1,\F^{(1)},c_1;{\bf s}_1)\subset B$.

  We will construct inductively a sequence ${\bf s}_1,{\bf s}_2,\dots$ such that for all $n\in\N$ ${\bf s}_n\in G^{m_n+1}$ and such that
  \begin{equation}\label{eq_thm_dsystemsgeneral1}\forall \alpha\subset[n], \alpha\neq\emptyset\qquad\qquad\sum_{i\in\alpha}D(m_i,\F^{(i)},c_i;{\bf s}_i)\subset B\end{equation}
  Above we found ${\bf s}_1$ such that (\ref{eq_thm_dsystemsgeneral1}) holds with $n=1$.
  Assume now that ${\bf s}_1,\dots,{\bf s}_n$ satisfying (\ref{eq_thm_dsystemsgeneral1}) have been found.
  Let
  $$T_n=\{0_G\}\cup\bigcup_{\emptyset\neq\alpha\subset[n]}\sum_{i\in\alpha}D(m_i,\F^{(i)},c_i;{\bf s}_i)$$
  and let $B_n=\bigcap_{x\in T_n}(B-x)\setminus T_n$.
  Observe that $B_n\subset B$.
  Since for each $x\in T_n$ we have $B-x\in p$ and because $p$ is an ultrafilter and hence closed under finite intersections, we deduce that $B_n\in p$. (removing the finite set $T_n$ does not affect this because $p$ is not principal and hence can not contain finite sets.)

  Using Theorem \ref{thm_generalmain} again we can find some ${\bf s}_{n+1}\in G^{m_{n+1}+1}$ such that $$D(m_{n+1},\F^{(n+1)},c_{n+1};{\bf s}_{n+1})\subset B_n.$$
  We claim that for this choice of ${\bf s}_{n+1}$ the inclusions (\ref{eq_thm_dsystemsgeneral1}) hold with $n+1$.

  Indeed, if $\emptyset\neq\alpha\subset[n+1]$ does not contain $n+1$, then (\ref{eq_thm_dsystemsgeneral1}) follows by induction.
  If $n+1\in\alpha$ then let $\beta=\alpha\setminus\{n+1\}\subset[n]$ and let $Q=\sum_{i\in\beta}D(m_i,\F^{(i)},c_i;{\bf s}_i)$.
  Observe that
  $$D(m_{n+1},\F^{(n+1)},c_{n+1};{\bf s}_{n+1})\subset B_n\subset\bigcap_{x\in Q}(B-x)$$
  Thus $\sum_{i\in \alpha}D(m_i,\F^{(i)},c_i;{\bf s}_i)=D(m_{n+1},\F^{(n+1)},c_{n+1};{\bf s}_{n+1})+Q\subset B$.
  This proves the claim that (\ref{eq_thm_dsystemsgeneral1}) holds for $n+1$, which finishes the induction.
  \end{proof}

To prove Theorem \ref{thm_systemspartitionregularity} takes some more work.

\begin{lemma}\label{lemma_cofinite}
Let $G$ and $\Lambda_t$ be as in Theorem \ref{thm_systemspartitionregularity} and let $\tilde\Lambda\subset\Lambda_t$ be a cofinite subset. Then any $\tilde\Lambda$-rich set is a $\Lambda_t$-rich set.
\end{lemma}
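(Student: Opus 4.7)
The plan is to reduce the statement to showing that $\tilde\Lambda$ is cofinal in $\Lambda_t$ with respect to the containment preorder, that is, for every $\mu\in\Lambda_t$ there exists $\nu\in\tilde\Lambda$ that contains $\mu$. Indeed, if this is established then any $\tilde\Lambda$-rich set $A$ contains a $\nu$-set for any such $\nu$, and by definition of containment this $\nu$-set contains a $\mu$-subset, proving that $A$ is also $\Lambda_t$-rich.

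To prove cofinality, I would first note that containment is a transitive relation and that Lemma \ref{lemma_dsyscontained} shows the preorder $(\Lambda_t,\text{``contains''})$ is directed: any two shapes have a common upper bound within $\Lambda_t$. The main reduction is then to the following key claim: for every $\mu\in\Lambda_t$, the up-set $U_\mu:=\{\nu\in\Lambda_t:\nu\text{ contains }\mu\}$ is infinite. Granting the claim, since $\Lambda_t\setminus\tilde\Lambda$ is finite by hypothesis, the intersection $U_\mu\cap\tilde\Lambda$ is non-empty, providing the desired $\nu$.

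For the key claim, I would explicitly construct an infinite family of distinct shapes dominating $\mu=(m,\F,c)$. For each $k\in\N$, set $\nu_k=(m+k,\F^{(k)},c)$ where $F^{(k)}_j=F_j$ for $1\leq j\leq m$ and $F^{(k)}_j=\{\pi_1\}$, the projection onto the first coordinate, for $m<j\leq m+k$. Each $F^{(k)}_j$ is a finite subset of $\HH(G^j,G)$ and $c$ remains in the center of $\E(G)$, so $\nu_k\in\Lambda_t$. The shapes are pairwise distinct because they have different first components $m+k$. Moreover, for any generator ${\bf s}'=(s'_0,\dots,s'_{m+k})$ of a $\nu_k$-set, the first $m+1$ lines of $D(\nu_k;{\bf s}')$ coincide verbatim with the lines of the $\mu$-set generated by $(s'_0,\dots,s'_m)$, so $D(m,\F,c;(s'_0,\dots,s'_m))\subset D(m+k,\F^{(k)},c;{\bf s}')$, establishing $\nu_k\in U_\mu$.

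No significant obstacle is anticipated; the argument is essentially a bookkeeping exercise once one observes that cofinite plus infinite up-sets implies cofinal. The only subtle point is to confirm that the explicit extensions $\nu_k$ remain inside $\Lambda_t$ (i.e.\ the added components $F^{(k)}_j$ still consist of homomorphisms and the endomorphism $c$ stays central), and that the containment verification holds line by line, both of which follow immediately from the choice above.
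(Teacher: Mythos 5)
Your proof is correct, and the overall skeleton matches the paper's: both arguments reduce the lemma to the statement that every $\mu\in\Lambda_t$ is contained in some shape belonging to $\tilde\Lambda$, after which the conclusion is immediate from transitivity of containment. The mechanism you use to produce that dominating shape, however, is genuinely different. The paper first asserts that cofiniteness yields a shape $\tilde\lambda\in\Lambda_t$ not contained in any shape of the finite set $\Lambda_t\setminus\tilde\Lambda$ (so that the up-set of $\tilde\lambda$ lies entirely in $\tilde\Lambda$), and then invokes Lemma \ref{lemma_dsyscontained} to find a common upper bound of $\mu$ and $\tilde\lambda$, which is forced into $\tilde\Lambda$. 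You instead show directly that the up-set $U_\mu$ is infinite, by padding $(m,\F,c)$ with $k$ extra lines $F^{(k)}_j=\{\pi_1\}$ for $m<j\leq m+k$, and conclude that an infinite set cannot avoid a cofinite one; your appeal to Lemma \ref{lemma_dsyscontained} and directedness is then superfluous and could be deleted. Your route has the virtue of being entirely explicit and self-contained: the padded shapes $\nu_k$ visibly stay in $\Lambda_t$ (the $\pi_1$ are homomorphisms and $c$ is unchanged), are pairwise distinct since their first components differ, and contain $\mu$ line by line via truncation of the generator. It also sidesteps the one step the paper leaves unjustified, namely the existence of the witness $\tilde\lambda$, which in general requires an argument about how many shapes a single shape can contain. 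What the paper's version buys in exchange is reuse of Lemma \ref{lemma_dsyscontained}, which is needed elsewhere anyway, and an argument that does not depend on exhibiting any particular family of shapes.
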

\begin{proof}
Since $\tilde\Lambda$ is cofinite in $\Lambda_t$ there must exist some \shp{} $\tilde\lambda\in\Lambda_t$ which is not contained in any \shp{} in $\Lambda_t\setminus\tilde\Lambda$; in other words, any \shp{} containing $\tilde\lambda$ is in $\tilde\Lambda$.

For any \shp{} $\lambda\in\Lambda_t$ one can use Lemma \ref{lemma_dsyscontained} to find another \shp{} $\phi(\lambda)\in\Lambda_t$ which contains both $\lambda$ and $\tilde\lambda$. Thus $\phi(\lambda)$ is actually inside $\tilde\Lambda$.

Given any $\tilde\Lambda$-rich set $A$, it contains a $\phi(\lambda)$-set for each $\lambda\in\Lambda_t$, hence $A$ contains a $\lambda$-set for each $\lambda\in\Lambda_t$, which is to say, $A$ is $\Lambda_t$-rich.
\end{proof}

\begin{lemma}\label{lemma_dsystemsinidempotents}
Let $G$ and $\Lambda_t$ be as in Theorem \ref{thm_systemspartitionregularity} and let $U\subset\beta G$ be the set of ultrafilters such that for every $p\in U$, any element $A\in p$ is $\Lambda_t$-rich.
Then $U$ contains a non-empty compact semigroup.
Moreover a set $A\subset G$ is a $\Lambda_t$-system if and only if there exists an idempotent ultrafilter $p\in U$ such that $A\in p$.
\end{lemma}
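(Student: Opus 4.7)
The plan is to show that $U$ itself is a compact subsemigroup of $\beta G$, and then to extract idempotents in suitable subsemigroups of $U$ to characterize $\Lambda_t$-systems.

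Non-emptiness and closedness of $U$ are standard: by Theorem \ref{thm_lambdacrich} the family $\mathcal{L}$ of $\Lambda_t$-rich subsets of $G$ is partition regular and upward closed, so a Zorn's lemma argument produces, for every $A \in \mathcal{L}$, an ultrafilter $p \in U$ with $A \in p$; and writing $U = \bigcap_{A \notin \mathcal{L}} (\beta G \setminus \bar A)$ exhibits $U$ as an intersection of closed sets. The crucial step is closure of $U$ under addition. Given $p, q \in U$ and $A \in p+q$, the set $B := \{g : A - g \in p\}$ lies in $q$ and hence is $\Lambda_t$-rich, and every $A - g$ with $g \in B$ is $\Lambda_t$-rich. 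Fix $\lambda = (m,\F,c) \in \Lambda_t$ and select a $\lambda$-set $E = D(\lambda;{\bf u}) \subset B$. Then $X := \bigcap_{g \in E}(A - g) \in p$ is $\Lambda_t$-rich, so contains a $\lambda$-set $E^* = D(\lambda;{\bf w})$; consequently $E^* + E \subset A$. Because each $f \in F_i$ is a homomorphism on $G^i$ and $c$ is a homomorphism on $G$, with ${\bf s} := {\bf u} + {\bf w}$ taken componentwise one has $c(s_0) = c(u_0) + c(w_0) \in E + E^*$, and for $i \geq 1$ and $f \in F_i$,
\[ f(s_0,\dots,s_{i-1}) + c(s_i) = [f(u_0,\dots,u_{i-1}) + c(u_i)] + [f(w_0,\dots,w_{i-1}) + c(w_i)] \in E + E^*, \]
so $D(\lambda;{\bf s}) \subset E + E^* \subset A$, establishing $A \in \mathcal{L}$. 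Thus $U$ is a compact semigroup and, by Ellis's theorem, contains idempotents.

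The ($\Leftarrow$) direction of the characterization is a direct adaptation of the proof of Theorem \ref{thm_DLambdamonochromatic}: given an idempotent $p \in U$ and $A \in p$, enumerate $\Lambda_t = \{\lambda_n\}$ and inductively construct ${\bf s}(\lambda_n)$ so that the sets $D(\lambda_n;{\bf s}(\lambda_n))$ are pairwise disjoint and every finite sum lies in $B := \{g \in A : A - g \in p\} \subset A$. The inductive step uses that $\bigcap_{g \in T_n}(B - g)$, a finite intersection of elements of $p$, is $\Lambda_t$-rich precisely because $p \in U$, hence contains a $\lambda_{n+1}$-set disjoint from the previously constructed finite set $T_n$. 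For the converse ($\Rightarrow$), given a $\Lambda_t$-system $A$ with witnesses ${\bf s}(\mu)$, enumerate $\Lambda_t = \{\mu_n\}$ and apply Lemma \ref{lemma_dsyscontained} iteratively to obtain pairwise distinct shapes $\lambda_n \in \Lambda_t$ each containing both $\lambda_{n-1}$ and $\mu_n$; setting $E_n := D(\lambda_n;{\bf s}(\lambda_n))$ one defines
\[ S_n := \Bigl\{ \textstyle\sum_{i \in \alpha} x_i : \alpha \subset [n,\infty) \text{ finite and non-empty},\ x_i \in E_i \Bigr\} \subset A. \]
Transitivity of shape-containment forces every $S_n$ to be $\Lambda_t$-rich, and the tail property $S_{M+1} \subset S_n - x$ (valid for $x \in S_n$ with $M = \max\alpha$) makes $V := \bigcap_n \overline{S_n}$ a closed subsemigroup of $\beta G$. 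A Zorn's lemma argument produces an ultrafilter extending $\{S_n\}_n$ and contained in $\mathcal{L}$, proving $V \cap U \neq \emptyset$; then Ellis applied to the closed subsemigroup $V \cap U$ of $U$ yields an idempotent $p^{\star} \in V \cap U$ with $A \in p^{\star}$ because $S_1 \subset A$.

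The principal obstacle is the algebraic identity underpinning the semigroup argument, which leans crucially on the homomorphism structure built into $\Lambda_t$ (namely $F_j \subset \HH(G^j,G)$ and $c \in \E(G)$). A minor technical point arises when $G$ is a group: one must choose ${\bf w}$ so that no component of ${\bf u} + {\bf w}$ vanishes, a constraint handled by exploiting the abundance of $\lambda$-sets available inside the $\Lambda_t$-rich set $X$.
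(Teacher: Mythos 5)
Your proposal is correct and follows essentially the same route as the paper's: the same homomorphism identity (each element of $D(\lambda;{\bf u}+{\bf w})$ splits as the sum of the corresponding elements of $D(\lambda;{\bf u})$ and $D(\lambda;{\bf w})$) gives closure of $U$ under addition, the ($\Leftarrow$) direction is the same inductive construction as in the proof of Theorem \ref{thm_DLambdamonochromatic}, and your tail-sum semigroup $V=\bigcap_n\overline{S_n}$ built over a containment-increasing chain of shapes obtained from Lemma \ref{lemma_dsyscontained} is a minor repackaging of the paper's $\bigcap\overline{DS(\tilde\Lambda)}$ taken over cofinite $\tilde\Lambda\subset\Lambda_t$ (both rest on the same tail/shift property and on \cite[Theorem 3.11]{Hindman_Strauss98} plus compactness for non-emptiness). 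The one place you are slightly more careful than the paper is in flagging that the components of ${\bf u}+{\bf w}$ must be kept nonzero, a point the paper's own proof passes over in silence.
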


\begin{proof}
First assume that $A\subset G$ is a $\Lambda_t$-system.
Let ${\bf s}$ be a function which assigns to each \shp{} $\lambda=(m,\F,c)\in\Lambda_t$ a vector ${\bf s}(\lambda)\in G^{m+1}$ such that
$$\bigcup_{\alpha\in{\mathcal F}(\Lambda_t)}\left(\sum_{\lambda\in\alpha}D\big(\lambda;{\bf s}(\lambda)\big)\right)\subset A$$
For each subset $\tilde\Lambda\subset\Lambda_t$ denote by $DS(\tilde\Lambda)$ the set
$$DS(\tilde\Lambda)=\bigcup_{\alpha\in{\mathcal F}(\tilde\Lambda)}\left(\sum_{\lambda\in\alpha}D\big(\lambda;{\bf s}(\lambda)\big)\right)$$
and let $\tilde K\subset\beta G$ be the intersection of the compact sets $\overline{DS(\tilde\Lambda)}$ as $\tilde\Lambda$ runs over all cofinite subsets of $\Lambda_t$. In other words
$$\tilde K=\bigcap_{\alpha\in{\mathcal F}(\Lambda_t)}\overline{DS(\Lambda_t\setminus\alpha)}\quad\subset\beta G$$
Next let $K=\tilde K\cap U$; we claim that $K$ is non-empty.
First, note that for any cofinite $\tilde\Lambda\subset\Lambda_t$, the set $DS(\tilde\Lambda)$ is $\tilde\Lambda$-rich, and hence, in view of Lemma \ref{lemma_cofinite}, it is also $\Lambda_t$-rich.
Using Theorem \ref{thm_lambdacrich}, it follows from \cite[Theorem 3.11]{Hindman_Strauss98} that the intersection $U\cap\overline{DS(\tilde\Lambda)}$ is a non-empty compact set.
Finally, for any finitely many cofinite subsets $\Lambda_1,\dots,\Lambda_k$ of $\Lambda_t$, we have that $\tilde\Lambda=\Lambda_1\cap\cdots\cap\Lambda_k$ is itself a cofinite subset of $\Lambda_t$, so the intersection
$$\bigcap_{i=1}^k\big(\overline{DS(\Lambda_i)}\cap U\big)\supset\overline{DS(\tilde\Lambda)}\cap U$$
is nonempty.
This implies that the infinite intersection $K=\tilde K\cap U$ is also nonempty, proving the claim.

Observe that for any $p\in K$, since $DS(\tilde\Lambda)\subset DS(\lambda)\subset A$ for any subset $\tilde\Lambda\subset\Lambda$, we have that $A\in p$.
Our strategy now is to show that $K$ is a closed semigroup and with the help of Ellis's lemma\footnote{Ellis's lemma \cite{Ellis58} states that any semi-continuous compact semigroup contains an idempotent.} find an idempotent $p\in K$.
This idempotent will be an element of $U$ that contains $A$.

Let $p,q\in K$; we first show that $p+q\in U$.
Indeed let $B\in p+q$ and let $\lambda=(m,\F,c)\in\Lambda_t$ be an arbitrary \shp.
By definition, $\{n\in G:B-n\in p\}\in q$.
Let ${\bf s}^{(1)}\in G^m$ be such that $D(\lambda;{\bf s}^{(1)})\subset\{n\in G:B-n\in p\}$, let
$$C=\bigcap_{n\in D(\lambda;{\bf s}^{(1)})}B-n\qquad\in p$$
and let ${\bf s}^{(2)}\in G^m$ be such that $D(\lambda;{\bf s}^{(2)})\subset C$.
Thus, in particular, for any $j=0,1,\dots,m$ and any $f\in F_j$ we have
$$\Big(c(s_j^{(2)})+f(s_{j-1}^{(2)},\dots,s_0^{(2)})\Big)+c(s_j^{(1)})+f(s_{j-1}^{(1)},\dots,s_0^{(1)}\in B$$
Thus, taking ${\bf s}={\bf s}^{(1)}+{\bf s}^{(2)}$, and because $c,f$ are homomorphisms, we have $D(\lambda;{\bf s})\subset B$. Since $\lambda\in\Lambda_t$ was arbitrary we conclude that $B$ is $\Lambda_t$-large, and because $B\in p+q$ was arbitrary, we conclude that $p+q\in U$.

Next we need to show that for any cofinite $\tilde\Lambda\subset\Lambda_t$ we have $p+q\in\overline{DS(\tilde\Lambda)}$, which is equivalent to $DS(\tilde\Lambda)\in p+q$.
By definition, this is equivalent to $\{n\in G:DS(\tilde\Lambda)-n\in p\}\in q$ and since both $p,q\in K$, this will follow if we show that
\begin{equation}\label{eq_lemma_dsystemsinidempotents}DS(\tilde\Lambda)\subset\{n\in G:DS(\tilde\Lambda)-n\in p\}\end{equation}
Fix $n\in DS(\tilde\Lambda)$.
Then we can decompose $n=\sum_{\lambda\in\alpha}x_\lambda$ for some $\alpha\in{\mathcal F}(\tilde\Lambda)$ and $x_\lambda\in D\big(\lambda;{\bf s}(\lambda)\big)$.
In particular, $DS(\tilde\Lambda)-n\subset DS(\tilde\lambda\setminus\alpha)$ and hence it is indeed in $p$, proving (\ref{eq_lemma_dsystemsinidempotents}).
This concludes the proof of the claim that $K$ is a non-empty compact semigroup.
Thus by Ellis's lemma, it contains an idempotent. This finishes the proof of the first direction.

Now we prove the converse: assume that $p\in U$ is idempotent and let $A\in p$.
  Let $B=\{n\in A:A-n\in p\}$ and observe that $B\in p$ and for every $n\in p$ also $B-n\in p$.

  Next enumerate $\Lambda_t=\{\lambda_1,\lambda_2,\dots\}$ and let $\lambda_i=(m_i,\F^{(i)},c_i)$.
  It follows from Theorem \ref{thm_finitistic} that there exists ${\bf s}_1\in G^{m_1+1}$ such that $D(m_1,\F^{(1)},c_1;{\bf s}_1)\subset B$.

  One can construct a sequence ${\bf s}_1,{\bf s}_2,\dots$ inductively such that for all $n\in\N$, one has ${\bf s}_n\in G^{m_n+1}$ and
  \begin{equation}\label{eq_thm_dsystemsgeneral}\forall \alpha\subset[n], \alpha\neq\emptyset\qquad\qquad\sum_{i\in\alpha}D(m_i,\F^{(i)},c_i;{\bf s}_i)\subset B\end{equation}
  The procedure for constructing this sequence is the same as in the proof of Theorem \ref{thm_DLambdamonochromatic} and will therefore be omitted.
  The sequence ${\bf s}_1,{\bf s}_2,\dots$ satisfies (\ref{eq_thm_dsystemsgeneral}) so $B$ is a $\Lambda_t$-system, concluding the proof.
  \end{proof}

We can now give the proof of Theorem \ref{thm_systemspartitionregularity}:

\begin{proof}[Proof of Theorem \ref{thm_systemspartitionregularity}]
  Let $A\subset G$ be a $\Lambda_t$-system.
  By Lemma \ref{lemma_dsystemsinidempotents}, there exists an idempotent ultrafilter $p\in\beta G$ such that $A\in p$ and every element of $p$ is $\Lambda_t$-rich.
  For any finite partition of $A$, one of the pieces must still be in $p$.
  Invoking again Lemma \ref{lemma_dsystemsinidempotents} we deduce that every element of $p$ is a $\Lambda_t$-system, finishing the proof.
\end{proof}

\section{Applications to systems of equations}\label{section_rado}
In this section we derive some corollaries of our results that pertain to partition regularity of homogeneous systems of equations.
In particular we show that the sufficient condition in Rado's theorem, when appropriately formulated, applies to any countable commutative semigroup.
Our departure point is Rado's theorem itself.
\begin{definition}\label{def_columnscond}
  Let $d,k\in\N$, let $A$ be a $k\times d$ matrix with integer coefficients and let $c_1,\dots,c_d\in\Z^k$ be the columns of $A$. We say that $A$ satisfies the \emph{columns condition} if there exist $m\in\N$ and integers $0=d_0<d_1<d_2<\cdots<d_m<d_{m+1}=d$ such that for every $0\leq j\leq m$, the sum
  $$c_{d_j+1}+c_{d_j+2}+\cdots+c_{d_{j+1}}$$
  is in the linear span (over $\Q$) of the set $\{c_i:i\leq d_j\}$ (with the understanding that the only vector in the linear span of the empty set is $\bf 0$).
\end{definition}
\begin{theorem}[Rado \cite{Rado33}]\label{thm_rado}
  Let $d,k\in\N$ and let $A$ be a $k\times d$ matrix with integer entries.
  Then for any finite coloring of $\N$ there exists ${\bf x}=(x_1,\dots,x_d)\in\N^d$ with all coordinates in the same color and $A{\bf x}={\bf0}$ if and only if $A$ satisfies the columns condition (possibly after some permutation of the columns of $A$).
\end{theorem}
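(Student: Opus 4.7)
The plan is to establish sufficiency (columns condition implies partition regularity) using Deuber's partition theorem (Theorem \ref{thm_introdeuber}, part (1)) as a black box, and necessity (partition regularity implies columns condition) by exhibiting a concrete obstruction coloring based on $q$-adic valuations for a suitably chosen prime $q$.

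For sufficiency, first I would fix a partition $0 = d_0 < d_1 < \cdots < d_m < d_{m+1} = d$ witnessing the columns condition (after permuting columns if necessary), and denote $I_j = \{d_j+1, \ldots, d_{j+1}\}$. By hypothesis $\sum_{i \in I_0} c_i = \mathbf{0}$, and for each $j = 1, \ldots, m$ there exist rationals $q_k^{(j)} \in \mathbb{Q}$ such that $\sum_{i \in I_j} c_i = \sum_{k \le d_j} q_k^{(j)} c_k$. Choose $c \in \mathbb{N}$ so that $cq_k^{(j)} \in \mathbb{Z}$ for all admissible $(j,k)$, and choose $p \in \mathbb{N}$ with $p \ge \max_{j,k} |cq_k^{(j)}|$. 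The key claim I would then verify by direct substitution is that for any $(m,p,c)$-set $D(m,p,c; \mathbf{s})$ generated by $(s_0, s_1, \ldots, s_m)$, setting
$$x_i = c s_{m-j} - c \sum_{J = j+1}^{m} q_i^{(J)} \, s_{m-J} \qquad (i \in I_j)$$
produces a vector $\mathbf{x} = (x_1, \ldots, x_d)$ with every coordinate lying in $D(m,p,c; \mathbf{s})$ and satisfying $A\mathbf{x} = \mathbf{0}$. Indeed, $x_i$ has the form $cs_{m-j} + \sum_{\ell < m-j} a_\ell^{(i)} s_\ell$ with $a_{m-J}^{(i)} = -cq_i^{(J)} \in \{-p, \ldots, p\}$ (so lies at ``level $m-j$'' of the $(m,p,c)$-set); and when one expands $A\mathbf{x}$ and collects the coefficient of each $s_\ell$, the terms cancel precisely because of the columns-condition identities. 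Combining this with part (1) of Theorem \ref{thm_introdeuber} gives the sufficient direction: any finite coloring of $\mathbb{N}$ admits a monochromatic $(m,p,c)$-set, hence a monochromatic solution of $A\mathbf{x} = \mathbf{0}$.

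For necessity, I would argue contrapositively: assume no permutation of the columns of $A$ satisfies the columns condition, and produce a finite coloring of $\mathbb{N}$ with no monochromatic solution. The standard approach is to pick a prime $q$ larger than the absolute values of all entries of $A$ and all numerators and denominators arising from suitable minors, and to color $n \in \mathbb{N}$ by the pair (or just the residue) $(v_q(n), n q^{-v_q(n)} \bmod q) \in \{0,1,\ldots,q-2\} \times (\mathbb{Z}/q\mathbb{Z})^*$, where $v_q$ denotes $q$-adic valuation. Given a hypothetical monochromatic solution $\mathbf{x}$, all coordinates have the same leading $q$-adic digit; one then examines the columns indexed by $x_i$'s of minimal $q$-adic valuation, next-to-minimal, and so on, and reads off from $A\mathbf{x}=\mathbf{0}$ a sequence of vanishing column sums modulo successive powers of $q$. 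Lifting these congruences to rational relations reconstructs exactly the partition $0 = d_0 < \cdots < d_{m+1} = d$ and the rational coefficients $q_k^{(j)}$ witnessing the columns condition, contradicting the hypothesis.

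The main obstacle is the necessity direction: organizing the $q$-adic argument so that the linear relations one extracts modulo $q$ indeed match the shape of the columns condition (including the requirement that the representing coefficients are rational, not merely residues modulo $q$). The sufficiency direction, by contrast, is essentially a bookkeeping computation once the right assignment $i \mapsto x_i$ is guessed, and its main subtlety is simply to orient the ``levels'' of the $(m,p,c)$-set correctly so that the block $I_j$ sits at level $m-j$ rather than level $j$, which is what allows the coefficients $-cq_i^{(J)}$ to appear at legitimate positions.
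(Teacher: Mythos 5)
You should first note that the paper does not actually prove Theorem \ref{thm_rado}: it is quoted as Rado's classical result, and the only argument the paper supplies is a two-paragraph sketch of the \emph{if} direction (columns condition $\Rightarrow$ partition regularity) via Deuber's theorem, with the details deferred to \cite{Graham_Rothschild_Spencer80}. Your sufficiency argument is exactly that reduction, and you have correctly filled in the matrix $B$ that the paper leaves implicit: writing $x_i = cs_{m-j} - c\sum_{J=j+1}^m q_i^{(J)} s_{m-J}$ for $i$ in the $j$-th block places $x_i$ at level $m-j$ of the $(m,p,c)$-set with legitimate coefficients $-cq_i^{(J)} \in \{-p,\dots,p\}$, and collecting the coefficient of $s_{m-J}$ in $A\mathbf{x}$ gives $c\bigl(\sum_{i\in I_J} c_i - \sum_{i\le d_J} q_i^{(J)} c_i\bigr) = \mathbf{0}$ by the columns condition. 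That half is correct and complete modulo the black box.

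The genuine gap is in the necessity direction, precisely at the step you flag as ``lifting these congruences to rational relations.'' A single prime $q$ and the coloring by the last nonzero base-$q$ digit only yield: a partition of the columns into blocks (ordered by the $q$-adic valuations of the $x_i$) together with linear relations \emph{modulo $q$} (and modulo higher powers of $q$ after dividing out). These mod-$q$ relations do not by themselves reconstruct rational coefficients $q_k^{(j)}$, and the block structure itself depends on $q$. The standard repair, which your sketch omits, is: (a) run the argument for every sufficiently large prime $q$, obtaining for each $q$ an ordering/partition of the columns; (b) pigeonhole over the finitely many possible orderings to fix one that works for infinitely many primes; and (c) observe that if $\sum_{i\in I_j} c_i$ were \emph{not} in the rational span of $\{c_i : i\le d_j\}$, there would be a fixed integer linear functional vanishing on the latter but taking a fixed nonzero integer value on the former, which cannot vanish modulo infinitely many primes --- so membership modulo infinitely many primes forces membership over $\Q$. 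Without steps (a)--(c) the argument does not close; with them it is the classical proof. (Also, your coloring's first component $v_q(n)\bmod(q-1)$ is harmless but unnecessary; the last nonzero digit alone suffices.)
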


The `if' direction of Rado's theorem follows directly from Deuber's Theorem \ref{thm_introdeuber}.
The idea is that the columns condition implies the existence of a triple $(m,p,c)\in\N^3$ such that any $(m,p,c)$-set contains a solution to $A{\bf x}={\bf0}$.

More precisely, using the columns condition one can find a $d\times(m+1)$ matrix $B$ such that $AB=0$ and, for any ${\bf s}\in\N^{m+1}$, the entries of the vector $B{\bf s}$ are contained in the $(m,p,c)$-set $D(m,p,c;{\bf s})$ for some $c,p\in\N$ that only depend on $A$.
Then, for any finite coloring of $\N$ one can find ${\bf s}\in\N^{m+1}$ such that $D(m,p,c;{\bf s})$ is monochromatic, and in particular, all coordinates of $B{\bf s}$ are monochromatic.
Since $AB=0$, also $A(B{\bf s})=0$.
The details of this deduction can be found, for instance, in \cite{Graham_Rothschild_Spencer80}.

We now turn to linear systems of equations in countable commutative semigroups and establish an analogue of the columns condition in this setting.
\begin{definition}\label{def_gencolumnscond}
  Let $G$ be a countable commutative semigroup with identity $0$, let $k,d\in\N$ and let $A:G^d\to G^k$ be a homomorphism.
  For each $i=1,\dots,d$ let $c_i:G\to G^k$ be the map defined by $c_i(x)=A(0,\dots,0,x,0,\dots,0)$,where the $x$ appears in the $i$-th position.

  We say that $A$ satisfies the \emph{columns condition} if there exist $c\in\E(G)$, $m\in\N$ and $0=d_0<d_1<\cdots<d_{m+1}=d$ such that
  \begin{enumerate}
  \item The composition $(c_1+c_2+\cdots+c_{d_1})\circ c$ is the zero map;
  \item For each $1\leq t\leq m$ there are $f_1^{(t)},\dots,f_{d_i}^{(t)}\in\E(G)$ such that
  \begin{equation}\label{eq_linearcomb}(c_{d_t+1}+\cdots+c_{d_{t+1}})\circ c+\Big(c_1\circ f_1^{(t)}+\cdots+c_{d_t}\circ f_{d_t}^{(t)}\Big)=0\end{equation}
  \end{enumerate}
  \end{definition}
  This definition can be seen as a direct extension of Definition \ref{def_columnscond}.
  Indeed, when $G=\Z$, the only homomorphisms are multiplication by a fixed integer and equation (\ref{eq_linearcomb}) expresses the fact the sum $c_{d_t+1}+\cdots+c_{d_{t+1}}$ is a linear combination of $c_1,\dots,c_{d_t}$.

The next proposition is an extension of the `if' part of Rado's theorem to countable commutative semigroups.
\begin{proposition}\label{prop_columns}
Let $G$ be a countable commutative semigroup with identity $0$, let $k,d\in\N$ and let $A:G^d\to G^k$ be a homomorphism which satisfies the columns condition for some $c\in\E(G)$ that is either in the center of $\E(G)$ or is IP-regular.
Then for any finite coloring of $G$ there exists ${\bf x}=(x_1,\dots,x_d)$ with all entries in the same color such that $A({\bf x})={\bf0}$.
\end{proposition}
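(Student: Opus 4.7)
The strategy is to encode the columns condition as a partition-regular \shp{} $(m,\F,c)$ whose every $(m,\F,c)$-set contains, among its entries, a solution to $A{\bf x}={\bf 0}$; partition regularity of the system then follows by finding a monochromatic $(m,\F,c)$-set via one of the results of the paper. The homomorphisms $c$ and $f_j^{(t)}$ supplied by the columns condition provide the raw material; the main work is to thread them together in an order that matches the forward dependency structure of an $(m,\F,c)$-set.

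For the construction, set $B_s:=\{d_s+1,\dots,d_{s+1}\}$ for $s=0,\dots,m$ and reverse the block order: for each ${\bf s}=(s_0,\dots,s_m)\in G^{m+1}$, each $t\in\{0,1,\dots,m\}$ and each $j\in B_{m-t}$, define
\[x_j({\bf s}):=c(s_t)+\sum_{u=0}^{t-1}f_j^{(m-u)}(s_u).\]
Since each $f_j^{(m-u)}$ is a homomorphism, $x_j$ has the shape $\phi_j(s_0,\dots,s_{t-1})+c(s_t)$ with $\phi_j\in\HH(G^t,G)$; for $t=0$ the empty sum gives $x_j=c(s_0)$. Taking $F_t:=\{\phi_j:j\in B_{m-t}\}$ for $t=1,\dots,m$ and $\F:=(F_1,\dots,F_m)$, every coordinate of ${\bf x}({\bf s})$ belongs to $D(m,\F,c;{\bf s})$. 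A direct calculation, exchanging the order of summation in the cross terms and using that for fixed $u\in\{0,\dots,m-1\}$ the index $j$ runs through $\{1,\dots,d_{m-u}\}$, shows via the columns condition that the cross terms cancel pairwise with the ``diagonal'' contributions $\bigl(\sum_{j\in B_{m-u}}c_j\bigr)\circ c\,(s_u)$, leaving only
\[A({\bf x}({\bf s}))=\Bigl(\sum_{j\in B_0}c_j\Bigr)\circ c\,(s_m)={\bf 0},\]
the last equality being the first part of the columns condition.

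It remains to produce a monochromatic $(m,\F,c)$-set. If $c$ lies in the center of $\E(G)$, write every $\phi\in\HH(G^t,G)$ as $\phi(g_1,\dots,g_t)=\sum_i e_i(g_i)$ with $e_i\in\E(G)$; centrality of $c$ then gives $c\circ\phi=\phi\circ\mathbf{c}$, so $c$ is concordant with $\F$ via $b=c$ and $a_\phi=\phi$, which places $(m,\F,c)$ in the \shd{} $\Lambda_t$ of Theorem \ref{thm_lambdacrich}. Since the ambient semigroup $G$ is trivially $\Lambda_t$-rich, Theorem \ref{thm_lambdacrich} yields a $\Lambda_t$-rich cell in any finite partition of $G$, which in particular contains an $(m,\F,c)$-set. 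If instead $c$ is IP-regular, take $\Gamma_j:=\HH(G^j,G)$: each $\Gamma_j$ is licit (Example \ref{example_homolicit}) and an R-family with respect to any minimal idempotent ultrafilter (Proposition \ref{prop_IPvdWgroups}), so Theorem \ref{thm_generalmain}, applied to a central cell of the finite partition (which exists because any finite partition of $G$ has a cell belonging to a minimal idempotent), produces the desired monochromatic $(m,\F,c)$-set. In either case the entries $x_j({\bf s})$ share a color and satisfy $A{\bf x}={\bf 0}$.

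The main obstacle is purely combinatorial bookkeeping: the columns condition expresses later blocks as linear combinations of earlier ones, while in an $(m,\F,c)$-set the level-$t$ elements depend on $s_0,\dots,s_{t-1}$, so the block order must be reversed, and the domain constraints $j\leq d_t$ on $f_j^{(t)}$ must be tracked through the reindexing to make the telescoping cancellation work out correctly. Once the assignment of $x_j$ is set up, the rest is routine verification and an appeal to the partition regularity machinery already established.
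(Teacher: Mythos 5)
Your proposal is correct and follows essentially the same route as the paper's own proof: the same assignment $x_i=\sum_{\ell}f_i^{(m-\ell)}(s_\ell)+c(s_j)$ for $i$ in the reversed block $B_{m-j}$, the same shape $\F$, the same telescoping cancellation via the columns condition, and the same appeal to the homomorphism partition-regularity machinery (the paper cites Theorem \ref{thm_finitistic} where you cite its corollary Theorem \ref{thm_lambdacrich} in the central case, and both use Theorem \ref{thm_generalmain} in the IP-regular case). No substantive differences.
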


\begin{proof}
Let $m\in\N$ and $c\in\E(G)$ be given by the columns condition.
For each $j=1,\dots,m$ let
$$F_j=\left\{f:\big(s_0,\dots,s_{j-1}\big)\mapsto\sum_{\ell=0}^{j-1}f_i^{(m-\ell)}(s_\ell):d_{m-j}<i\leq d_{m+1-j}\right\}$$
 and let $\F=(F_1,\dots,F_m)$.
Assume we are given a finite coloring of $G$.
Appealing to either Theorem \ref{thm_finitistic} or Theorem \ref{thm_generalmain} (according to whether $c$ is in the center of $\E(G)$ or IP-regular) we can find ${\bf s}\in G^{m+1}$ such that the $(m,\F,c)$-set $D(m,\F,c;{\bf s})$ is monochromatic.
For each $i=1,\dots,d$, let $j\in\{0,\dots,m\}$ be such that $d_{m-j}<i\leq d_{m+1-j}$ (and observe that $j$ is uniquely determined).
Let
$$x_i=\sum_{\ell=0}^{j-1}f_i^{(m-\ell)}(s_\ell)+c(s_j)$$
Observe that $x_i\in D(m,\F,c;{\bf s})$ and hence all the entries of the vector ${\bf x}=(x_1,\dots,x_d)\in G^d$ are of the same color.
Finally we need to check that $A({\bf x})={\bf 0}$.
Let $c_1,\dots,c_d$ be as in Definition \ref{def_gencolumnscond} and observe that each $c_i:G\to G^k$ is a homomorphism.
We have

\begin{eqnarray*}
A({\bf x})&=&\sum_{i=1}^dc_i(x_i)=\sum_{j=0}^m\sum_{i=d_{m-j}+1}^{d_{m-j+1}}c_i(x_i)\\&=&\sum_{j=0}^m\sum_{i=d_{m-j}+1}^{d_{m-j+1}}c_i\left(\sum_{\ell=0}^{j-1}f_i^{(m-\ell)}(s_\ell)+c(s_j)\right)\\&=&\sum_{j=0}^m\sum_{i=d_{m-j}+1}^{d_{m-j+1}}(c_i\circ c)(s_j)+\sum_{j=0}^m\sum_{i=d_{m-j}+1}^{d_{m-j+1}}\sum_{\ell=0}^{j-1}(c_i\circ f_i^{(m-\ell)})(s_\ell)\\&=&\sum_{\ell=0}^m\left[\sum_{i=d_{m-\ell}+1}^{d_{m-\ell+1}}(c_i\circ c)+\sum_{j=0}^{m-\ell-1}\sum_{i=d_{m-j}+1}^{d_{m-j+1}}(c_i\circ f_i^{(m-\ell)})\right](s_\ell)\\&=&\sum_{t=0}^m\left[\left(\sum_{i=d_t+1}^{d_{t+1}}c_i\right)\circ c+\sum_{i=1}^{d_t}c_i\circ f_i^{(t)}\right](s_\ell)\\&=&0\end{eqnarray*}
where the last equality follows from the columns conditions.
\end{proof}

While Proposition \ref{prop_columns} provides a quite satisfactory extension of the sufficient condition in Rado's theorem to a general setting, it is not even clear how to formulate the necessary condition.
\begin{problem}
  Let $G$ be a countable commutative semigroup, let $k,d\in\N$ and let $A:G^d\to G^k$ be a homomorphism.
  Give necessary and sufficient conditions for $A$ so that for any finite partition of $G$ there exists a non-zero ${\bf x}=(x_1,\dots,x_d)$ with all entries in the same cell of the partition and such that $A({\bf x})={\bf0}$.
\end{problem}
We conclude by remarking that an analogue of the columns condition can be concocted for polynomial equations in such a way that an analogue of Proposition \ref{prop_columns} holds, but the condition is cumbersome and so it appears to be of little practical value.
\bibliography{refs-joel}
\bibliographystyle{plain}

\end{document}